\def\version{
August 11, 2019
}
\def\XXint#1#2#3{{\setbox0=\hbox{$#1{#2#3}{\int}$}
  \vcenter{\hbox{$#2#3$}}\kern-.5\wd0}}
\definecolor{MyDarkBlue}{rgb}{0,0.08,0.45}
\definecolor{MyDarkGreen}{rgb}{0,0.6,0.1}
\definecolor{Pomegranate}{rgb}{0.6,0.1,0.15}
\definecolor{purple}{rgb}{0.6,0.1,0.15}
\providecommand{\eqref}[1]{{\rm (\ref{#1})}}
\providecommand{\itref}[1]{{\it (\ref{#1})}}
\DeclareSymbolFont{AMSb}{U}{msb}{m}{n}
\DeclareSymbolFontAlphabet{\mathbb}{AMSb}
\DeclareSymbolFont{EUR}{U}{eur}{m}{n}
\DeclareSymbolFontAlphabet{\eur}{EUR}
\DeclareSymbolFont{EUB}{U}{eur}{b}{n}
\DeclareSymbolFontAlphabet{\eub}{EUB}
\newcommand{\dom}{\mathfrak{D}}
\newcommand{\range}{\mathfrak{R}}
\renewcommand{\ker}{\mathop{\mathbf{ker}}}
\newcommand{\frakL}{\mathfrak{L}}
\newcommand{\jj}{\mathrm{i}}
\newcommand{\bmK}{\bm{K}}
\newcommand{\Span}{\mathop{\mathrm{Span}}}
\newcommand{\End}{\,{\rm End}\,}
\providecommand{\Ln}{\mathop{\rm Ln}}
\newcommand{\scrX}{\mathscr{X}}
\newcommand{\eubA}{\eub{A}}
\newcommand{\eubD}{\eub{D}}
\newcommand{\eubJ}{\eub{J}}
\newcommand{\eubK}{\eub{K}}
\newcommand{\eubL}{\eub{L}}
\newcommand{\eubV}{\eub{V}}
\newcommand{\eubX}{\eub{X}}
\newcommand{\eubY}{\eub{Y}}
\newcommand{\eurl}{\eur{l}}
\newcommand{\eubz}{\eub{z}}
\newcommand{\e}{\bm{e}}
\newcommand{\bmupalpha}{\bm\upalpha}
\newcommand{\bmupbeta}{\bm\upbeta}
\newcommand{\bmuppsi}{\bm\uppsi}
\newcommand{\bmupphi}{\bm\upphi}
\newcommand{\bmuprho}{\bm\uprho}
\newcommand{\bmPsi}{\bm\Psi}
\newcommand{\bmXi}{{\bm\Xi}}
\newcommand{\eubP}{\eub{P}}
\newcommand{\tr}{\mathop{\rm Tr}}
\newcommand{\rank}{\mathop{\rm rank}}
\newcommand{\supp}{\mathop{\rm supp}}
\newcommand{\p}{\partial}
\newcommand{\at}[1]{\vert\sb{\sb{#1}}}
\def\R{\mathbb{R}}
\newcommand{\C}{\mathbb{C}}
\newcommand{\bfW}{\mathbf{W}}
\newcommand{\bfX}{\mathbf{X}}
\newcommand{\bfY}{\mathbf{Y}}
\newcommand\fra[2]{#1/#2}
\newcommand{\N}{\mathbb{N}}
\newcommand{\Abs}[1]{\left\vert#1\right\vert}
\newcommand{\abs}[1]{\vert #1 \vert}
\newcommand{\Norm}[1]{\Big\Vert #1 \Big\Vert}
\newcommand{\norm}[1]{\Vert #1 \Vert}
\newcommand{\bignorm}[1]{\big\Vert #1 \big\Vert}
\newcommand{\Bignorm}[1]{\Big\Vert #1 \Big\Vert}
\newcommand{\sothat}{\,\,{\rm ;}\ \,}
\newcommand{\ac}[1]{\noindent\textcolor{red}
{{\rm [\![}\mbox{{AC}$\blacktriangleright\!\!\blacktriangleright$}: { #1}{\rm ]\!]}}}
\renewcommand{\ac}[1]{}
\DeclareMathSymbol{\varGamma}{\mathord}{letters}{"00}
\DeclareMathSymbol{\varDelta}{\mathord}{letters}{"01}
\DeclareMathSymbol{\varTheta}{\mathord}{letters}{"02}
\DeclareMathSymbol{\varLambda}{\mathord}{letters}{"03}
\DeclareMathSymbol{\varXi}{\mathord}{letters}{"04}
\DeclareMathSymbol{\varPi}{\mathord}{letters}{"05}
\DeclareMathSymbol{\varSigma}{\mathord}{letters}{"06}
\DeclareMathSymbol{\varUpsilon}{\mathord}{letters}{"07}
\DeclareMathSymbol{\varPhi}{\mathord}{letters}{"08}
\DeclareMathSymbol{\varPsi}{\mathord}{letters}{"09}
\DeclareMathSymbol{\varOmega}{\mathord}{letters}{"0A}
\theoremstyle{plain}
\newtheorem{lemma}{Lemma}[section]
\newtheorem{theorem}{Theorem}[section]
\newtheorem{proposition}[lemma]{Proposition}
\theoremstyle{definition}
\newtheorem{definition}[lemma]{Definition}
\newtheorem{assumption}[lemma]{Assumption}
\theoremstyle{remark}
\newtheorem{remark}[lemma]{Remark}
\providecommand{\sep}{; }
\newcounter{step}
\makeatletter\@addtoreset{equation}{section}
\makeatletter\@addtoreset{lemma}{section}
\makeatletter\@addtoreset{theorem}{section}
\def\dist{\mathop{\rm dist}\nolimits}
\renewcommand{\Re}{\mathop{\rm{R\hskip -1pt e}}\nolimits}
\renewcommand{\Im}{\mathop{\rm{I\hskip -1pt m}}\nolimits}
\begin{document}

\title{
Spectral stability of 
small amplitude solitary waves 
of the Dirac equation with the Soler-type nonlinearity
}

\author{
{\sc Nabile Boussa{\"\i}d}
\\
{\it\small Universit\'e Bourgogne Franche-Comt\'e, 25030 Besan\c{c}on CEDEX, France}
\\ \\
{\sc Andrew Comech}
\\
{\it\small Texas A\&M University, College Station, Texas 77843, USA}
\\
{\it\small Institute for Information Transmission Problems, Moscow 127051, Russia}
}

\date{\version}

\maketitle

\begin{abstract}
We study the point spectrum
of the linearization at a solitary wave solution
$\phi\sb\omega(x)e^{-\mathrm{i}\omega t}$
to the nonlinear Dirac equation in $\mathbb{R}^n$, for all $n\ge 1$,
with the nonlinear term given by $f(\psi^*\beta\psi)\beta\psi$
(known as the Soler model).
We focus on the spectral stability,
that is, the absence of eigenvalues with positive real part,
in the non-relativistic limit $\omega\to m-0$,
in the case when
$f\in C^1(\mathbb{R}\setminus\{0\})$, $f(\tau)=|\tau|^\kappa+O(|\tau|^K)$
for $\tau\to 0$,
with $0<\kappa<K$.
For $n\ge 1$, we prove the spectral stability
of small amplitude solitary waves ($\omega\lessapprox m$)
for the charge-subcritical cases $\kappa\lessapprox 2/n$
(in particular, $1<\kappa\le 2$ when $n=1$)
and for the ``charge-critical case'' $\kappa=2/n$ (with $K>4/n$).

An important part of the stability analysis
is the proof of the absence of bifurcations
of nonzero-real-part eigenvalues
from the embedded threshold points at $\pm 2m\mathrm{i}$.
Our approach is based on constructing
a new family of exact bi-frequency solitary wave solutions
in the Soler model,
on using this family to determine the multiplicity of
$\pm 2\omega\mathrm{i}$ eigenvalues of the linearized operator,
and on the analysis of the behaviour of
``nonlinear eigenvalues'' (characteristic roots
of holomorphic operator-valued functions).
\end{abstract}


\begin{verse}
{\bf Keywords:\ }
nonlinear Dirac equation\sep
Soler model\sep
spectral stability\sep
stability of solitary waves
\end{verse}


\section{Introduction}
We study stability of solitary waves in the nonlinear Dirac equation
with the scalar self-interaction \cite{jetp.8.260,PhysRevD.1.2766},
known as the Soler model:
\begin{equation}\label{nld}
 \jj \p\sb t\psi=D\sb m\psi-f(\psi\sp\ast\beta\psi)\beta\psi,
\qquad
\psi(t,x)\in\C\sp N,
\quad
x\in\R\sp n,
\quad
n\ge 1.
\end{equation}
Here
the Dirac operator is given by
$D\sb m=
-\jj \bm\alpha\cdot\nabla_x+\beta m
$,
with $m>0$
and the self-adjoint $N\times N$
Dirac matrices
$\alpha\sp i$, $1\le i\le n$,
and $\beta$
chosen so that
$D_m^2=-\Delta+m^2$;
for details, see
\emph{notations} at the end of this section.
We assume that
$
f\in C^1(\R\setminus\{0\})$
is real-valued,
$f(\tau)=\abs{\tau}^\kappa+O(\abs{\tau}^K)$
as
$\tau\to 0$,
with $0<\kappa<K$.
The structure of the nonlinearity,
$f(\psi\sp\ast\beta\psi)\beta\psi$,
is such that
the equation is $\mathbf{U}(1)$-invariant and hamiltonian.

Given a solitary wave solution
$\psi(t,x)=\phi\sb\omega(x)e\sp{-\jj \omega t}$
to \eqref{nld},
with $\omega\in\R$ and
$
\phi\sb\omega\in H\sp 1(\R\sp n,\C\sp N),
$
we consider its perturbation,
$(\phi\sb\omega(x)+\rho(t,x))e\sp{-\jj \omega t}$,
and study the spectrum of the linearized equation on $\rho$
(that is, the spectrum of the linearization operator).
We will say that this particular solitary wave is
\emph{spectrally stable}
if the spectrum of the linearization operator
has no points in the right half-plane.
In the present work,
we prove the spectral stability
of small amplitude 
solitary waves
corresponding to the nonrelativistic limit
$\omega\lessapprox m$,\footnote{
We write $\omega\lessapprox m$
to indicate that
$\omega\in(m-\varepsilon,m)$
for some $\varepsilon>0$ small enough.
}
in the case
$\kappa\lessapprox 2/n$, $K>\kappa$
(``charge-subcritical'')
and $\kappa=2/n$, $K>4/n$
(``charge-critical'').
This is the first rigorous analytic result on spectral stability
of solitary wave solutions of the nonlinear Dirac equation
(with the exception of massive Thirring model
in (1+1)D  \cite{pelinovsky2014orbital,MR3462129}
which is completely integrable);
it opens the way to the proofs of asymptotic stability
in the nonlinear Dirac context.

The question of stability of solitary waves
is answered in many cases
for the nonlinear Schr\"odinger,
Klein--Gordon, and Korteweg--de Vries equations
(see e.g. the review \cite{MR1032250}).
In these systems,
at the points represented by solitary waves,
the hamiltonian function
is of finite Morse index.
In simpler cases, the Morse index is equal to one,
and the perturbations in the corresponding direction
are prohibited
by a conservation law
when the Kolokolov stability condition \cite{kolokolov-1973} is satisfied.
In other words,
the solitary waves could be demonstrated to correspond
to conditional minimizers of the energy
under the charge constraint; this results
not only in spectral stability
but also in orbital stability \cite{MR677997,weinstein1985modulational,shatah1985,MR820338,MR901236}.
The nature of stability of solitary wave solutions
of the nonlinear Dirac equation
seems completely different from this picture \cite[Section V]{ranada1983classical}.
In particular, the hamiltonian function is
not bounded from below, and is of infinite Morse index;
the NLS-type approach to stability fails.
As a consequence,
we do not know how to prove the \emph{orbital stability}
but via proving the asymptotic stability first. 
The only known exception is the above-mentioned
one-dimensional completely integrable massive  Thirring model,
where the orbital stability was proved by means
of a coercive conservation law \cite{pelinovsky2014orbital,MR3462129}
coming from higher-order integrals of motion.

The first numerical evidence of spectral stability of solitary waves
to the cubic nonlinear Dirac equation in (1+1)D
(known as the massive Gross--Neveu model)
was given in~\cite{MR2892774},
where the spectrum of the linearization at solitary waves
was computed via the Evans function technique;
no nonzero-real-part eigenvalues have been detected;
this result
was confirmed by numerical simulations of the dynamics
in \cite{lakoba2018numerical}.
A similar model
in dimension $1$ is given by the nonlinear coupled-mode equations;
the numerical analysis of spectral stability
of solitary waves in such models has been done in
\cite{PhysRevLett.80.5117,MR2217129,MR2513792}. 
In the absence of spectral stability,
one expects to be able to prove \emph{orbital instability},
in the sense of~\cite{MR901236};
in the context of the nonlinear Schr\"odinger equation,
such instability is proved in e.g. \cite{MR2356215,MR2916078}.
If instead a particular solitary wave is spectrally stable,
one hopes to prove the asymptotic stability.
Let us give a brief account on asymptotic stability results
in dispersive models with unitary invariance.
The asymptotic stability for the nonlinear Schr\"odinger equation
is
proved using the dispersive properties; see for instance the seminal works~\cite{MR1071238,MR1170476} for
small amplitude
solitary waves bifurcating from the ground state of the linear Schr\"odinger equation (thus with a potential) and \cite{MR1192111,MR1221351,MR1199635,MR1334139}
in the translation-invariant case,
in dimension $1$.  Under {\it ad hoc} assumptions on the spectral stability some extensions to the nonlinear Schr\"odinger equation in any dimension can be expected, see~\cite{MR1835384,MR2027616, MR2571965}. The analysis of the dynamics of excited states and possible relaxation to the ground state solution for small solitary waves (in any dimension) was considered in~\cite{MR1865414,tsai2002classification,tsai2002relaxation,tsai2002stable,MR1972870,soffer2004,MR2219305, MR2480603}.
These have been improved in~\cite{MR1488355,MR1799850,MR2101699,MR2187292,MR2182081,MR2276543,MR2351368,MR2300249,MR2402517,MR2422523,MR2443298,MR2542729,MR2557723,MR2528489,MR2805462,MR3180019}. 
This path is also developed for the nonlinear Dirac equation in
\cite{boussaid2006stable, boussaid2008asymptotic, boussaid2012stability,   MR2985264, MR3447012,MR3592683}.
The needed dispersive properties of Dirac type models have been studied in these references and separately in~\cite{MR1434039,machihara2005endpoint,dancona2007decay, MR2424390, MR2474172,MR2763073,MR2857360,cacciafesta2013,MR3134742,MR3404556,erdogan2017dirac,2016arXiv160905164E}. Note that the most famous class of dispersive estimates is the one of the Strichartz estimates; this class was commonly used as a major tool for well-posedness in some of the above references. We also refer, for the well-posedness problem, to the review~\cite{MR2883845}. The question of the existence of stationary solutions, related to the indefiniteness of the energy, is discussed in the review~\cite{MR2434346}. 

While the purely imaginary essential spectrum
of the linearization operator is readily available
via Weyl's theorem on the essential spectrum
(see \cite{MR3530581} for more details and for the background on the topic),
the discrete spectrum is much more delicate.
Our aim in this work is to investigate the
presence of eigenvalues with positive real part,
which would be responsible for the linear
instability of a particular solitary wave.
As $\omega$ changes, such eigenvalues
can bifurcate from the point spectrum
on the imaginary axis or even from the essential spectrum.
In~\cite{MR3530581},
we have already shown that
the bifurcations of eigenvalues from the essential spectrum
into the half-planes with $\Re\lambda\ne 0$
are only possible
from the embedded thresholds at $\pm\jj(m+\abs{\omega})$ (see e.g. \cite{PhysRevLett.80.5117}),
from the collisions of eigenvalues
on the imaginary axis,
from the embedded eigenvalues
(by \cite[Theorem 2.2]{MR3530581},
there are no embedded eigenvalues
beyond the embedded thresholds),
and from the point of the collision of
the edges of the continuous spectrum
at $\lambda=0$ when $\omega=\pm m$ \cite{MR3208458}
and at $\lambda=\pm m \jj $ when $\omega=0$ \cite{MR1897705}.

Let us mention that the linear instability
in the nonrelativistic limit $\omega\lessapprox m$
in the ``charge-supercritical'' case $\kappa>2/n$
(complementary to cases which we consider in this work)
follows from \cite{MR3208458};
the restrictions in that article were $\kappa\in\N$ and
$n\le 3$, but they are easily removed by using the
nonrelativistic asymptotics of solitary waves
obtained in \cite{MR3670258}.
By numerics of \cite{PhysRevLett.116.214101},
in the case of the pure-power nonlinearity
$f(\tau)=\abs{\tau}^\kappa$, $\kappa>2/n$,
the spectral instability disappears
when $\omega\in(0,m)$
becomes sufficiently small.

We note that quintic nonlinear Schr\"odinger equation
in (1+1)D and the cubic one in (2+1)D
are ``charge critical'' (all solitary waves have the same charge),
and as a consequence
the linearization at any solitary wave
has a $4\times 4$ Jordan block at $\lambda=0$,
resulting in dynamic instability of all solitary waves;
moreover, there is a blow-up phenomenon
in the charge-critical
as well as in the charge-supercritical cases;
see in particular
\cite{1971ZhPmR..14..564Z,ZhETF.68.465,MR0460850,weinstein1983,MR1048692}.
On the contrary, for the nonlinear Dirac
with the critical-power nonlinearity,
the charge of solitary waves is no longer constant:
by \cite{MR3670258}, one has
$\p\sb\omega Q(\phi\sb\omega)<0$ for $\omega\lessapprox m$,
where $Q(\phi\sb\omega)$
is the corresponding charge (\eqref{linear-b-def-Q} below).
As a consequence, the linearization at solitary waves
in the nonrelativistic limit
has no $4\times 4$ Jordan block,
which resolves into $2\times 2$ Jordan block
(corresponding to the unitary invariance)
and two purely imaginary eigenvalues.



Here is the plan of the present work.
The results are stated in Section~\ref{linear-b-sect-results}.
In Section~\ref{linear-b-sect-technical},
we introduce the linearization operator
and derive some technical results.
In Sections~\ref{linear-b-sect-vb-thresholds-1}
and~\ref{linear-b-sect-nonlinear},
we study bifurcations of eigenvalues
from the embedded thresholds
at $\pm(m+\abs{\omega})\jj$
in the nonrelativistic limit $\omega\to m-0$.
In particular,
we develop the theory of characteristic roots
of operator-valued holomorphic functions,
in the spirit of \cite{MR0041353,MR0300125,MR0312306,MR0313856}
(this is done in Section~\ref{linear-b-sect-nonlinear}).
The bifurcations of eigenvalues from the origin
are analyzed in Section~\ref{linear-b-sect-vb-zero}.

In Appendix~\ref{linear-b-sect-analytic-continuation},
we construct the analytic continuation
of the resolvent of the free Laplace operator,
extending the three-dimensional approach of \cite{MR0495958}
to all dimensions $n\ge 1$.
In Appendix~\ref{linear-b-sect-nls}
we give details on the spectral theory
for the nonlinear Schr\"odinger equation
linearized at a solitary wave.
The limiting absorption principle for a particular Schr\"odinger operator
near the threshold point is covered in Appendix~\ref{sect-lap-lm}.
In Appendix~\ref{linear-b-sect-Schur},
we state the Schur complement in the form which we need in the proofs.

\medskip

\noindent{\bf Notations.\ }
We denote
$\N=\{1,2,\dots\}$ and $\N_0=\{0\}\cup\N$.
For $\rho>0$,
an open disc of radius $\rho$ in the complex plane
centered at $z_0\in\C$
is denoted by
$
\mathbb{D}_\rho(z_0)=\{z\in\C\sothat \abs{z-z_0}<\rho\};
$
we also denote
$
\mathbb{D}_\rho=\mathbb{D}_\rho(0).
$
We denote $r=\abs{x}$
for $x\in\R^n$, $n\in\N$,
and, abusing notations,
we will also denote the operator of multiplication
with $\abs{x}$ and $\langle x\rangle=(1+\abs{x}^2)^{1/2}$
by $r$ and $\langle r\rangle$, respectively.

For $s,\,\alpha\in\R$,
we define the weighted Sobolev spaces
of $\C\sp N$-valued functions by
\[
H\sp\alpha\sb{s} (\R\sp n,\C\sp N)
=\big\{ u\in \mathscr{S}'(\R\sp n,\C\sp N),\,
\norm{u}\sb{H\sp\alpha\sb s}
<\infty\big\},
\qquad
\norm{u}\sb{ H\sp\alpha\sb s}=
\norm{\langle r\rangle\sp s\langle -\jj\nabla\rangle\sp\alpha u}\sb{L\sp 2}.
\]
We write
$L\sp{2}\sb{s}(\R\sp n,\C\sp N)$
for~$H\sp{\alpha}\sb{s}(\R\sp n,\C\sp N)$
with $\alpha=0$
and
$H\sp\alpha(\R\sp n,\C\sp N)$
for~$H\sp\alpha\sb s(\R\sp n,\C\sp N)$ with $s=0$.
For $u\in L\sp 2(\R\sp n,\C\sp N)$,
we denote $\norm{u}=\norm{u}\sb{L\sp 2}$.

For any pair of normed vector spaces $E$ and $F$,
let $\mathscr{B}(E,F)$
denote the set of bounded linear operators from $E$ to $F$.
For an unbounded linear operator $A$
acting in a Banach space $X$
with a dense domain $\dom(A)\subset X$,
the spectrum $\sigma(A)$
is the set of values $\lambda \in\C$ such that
the operator $A-\lambda:\;\dom(A)\to X$
does not have a bounded inverse.
The null space (the kernel) of $A$ is defined by
\[
\ker(A)
=
\{
v\in \dom(A)\sothat
A v=0
\}.
\]
The generalized null space of $A$ is defined by
\[
\frakL(A)
:=
\mathop{\cup}\limits\sb{k\in\N} \ker(A^k)
=
\mathop{\cup}\limits\sb{k\in\N}\{v\in \dom(A)\sothat A^j v\in \dom(A)\ \forall j<k,\ A^k v=0
\}.
\]
The discrete spectrum $\sigma\sb{\mathrm{d}}(A)$ is the
set of isolated eigenvalues
$\lambda\in\sigma(A)$
of finite algebraic multiplicity,
such that
$
\dim\frakL(A-\lambda)<\infty.
$
The essential spectrum $\sigma\sb{\mathrm{ess}}(A)$ is the
complementary set of discrete spectrum in the spectrum.
The point spectrum $\sigma\sb{\mathrm{p}}(A)$
is the set of eigenvalues (isolated or embedded
into the essential spectrum).

We denote the free Dirac operator by
\[
D\sb m
=-\jj \bm\alpha\cdot\nabla_x+\beta m
=-\jj\sum\sb{i=1}\sp{n}\alpha\sp i\frac{\p}{\p x\sp i}
+\beta m,
\qquad
m>0,
\]
and the massless Dirac operator by
$D_0=-\jj \bm\alpha\cdot\nabla_x$.
Above,
$\alpha\sp i$ and $\beta$
are self-adjoint $N\times N$ Dirac matrices
which satisfy
$
(\alpha\sp i)\sp 2=\beta\sp 2=I_{N},
$
$
\alpha\sp i\alpha\sp j
+\alpha\sp j\alpha\sp i=2\delta\sb{i j}I_{N},
$
$
\alpha\sp i\beta+\beta\alpha\sp i=0,
$
$
1\le i,\,j\le n$,
so that $D_m^2=(-\Delta+m^2)I_{N}$.
Here $I_{N}$ denotes the $N\times N$ identity matrix.
The anticommutation relations lead to e.g.
$
\tr\alpha\sp i
=\tr\beta^{-1}\alpha\sp i\beta=-\tr\alpha\sp i=0,
$
$1\le i\le n$,
and similarly $\tr\beta=0$;
together with $\sigma(\alpha\sp i)=\sigma(\beta)=\{\pm 1\}$,
this yields the conclusion that $N$ is even.
Let us mention that the Clifford algebra representation theory
(see e.g. \cite[Chapter 1, \S5.3]{MR1401125})
shows that there is a relation
$
N=2^{[(n+1)/2]}M,
$
$
M\in\N.
$
Without loss of generality,
we may assume that
$
\beta
=
\begin{bmatrix}
I_{{N/2}}&0\\0&-I_{{N/2}}
\end{bmatrix}.
$
Then the anticommutation relations
$\{\alpha\sp i,\beta\}=0$, $1\le i\le n$,
show that
the matrices $\alpha\sp i$
are block-antidiagonal,
$
\alpha\sp i
=
\begin{bmatrix}
0&\upsigma_i\sp\ast\\\upsigma_i&0
\end{bmatrix},
$
$
1\le i\le n,
$
where the matrices
$\upsigma_i$, $1\le i\leq n$,
satisfy
$
\upsigma_i\sp\ast\upsigma_j+\upsigma_j\sp\ast\upsigma_i
=2\delta\sb{i j},
$
$
\upsigma_i\upsigma_j\sp\ast
+\upsigma_j\upsigma_i\sp\ast
=2\delta\sb{i j},
$
$
1\le i,\,j\le n;
$
the Dirac operator is thus given by
\begin{eqnarray}\label{linear-b-def-dm}
D_m=-\jj\bm\alpha\cdot\nabla_x+\beta m
=
-\jj
\sum\sb{i=1}\sp{n}
\begin{bmatrix}
0&\upsigma_i\sp\ast\\\upsigma_i&0
\end{bmatrix}
\p\sb{x\sp i}
+m\begin{bmatrix}I_{{N/2}}&0\\0&-I_{{N/2}}\end{bmatrix},
\quad
m>0.
\end{eqnarray}



\medskip

\noindent{\bf Acknowledgment.\ }
We are indebted to the anonymous referee for
corrections and suggestions.
Support from the grant
ANR-10-BLAN-0101 of the French Ministry of Research is gratefully acknowledged by the first author.
During the work on the project,
the second author was partially supported by Universit\'{e} Bourgogne Franche--Comt\'e
and by the Metchnikov fellowship from the French Embassy.
Both authors acknowledge the financial support of the r\'egion Bourgogne Franche--Comt\'e.

\section{Main results}
\label{linear-b-sect-results}

We consider the nonlinear Dirac equation \eqref{nld},
\[
\jj\p\sb t\psi
=D_m\psi-f(\psi\sp\ast\beta\psi)\beta\psi,
\qquad
\psi(t,x)\in\C^N,
\qquad
x\in\R^n,
\]
with the Dirac operator $D_m$ of the form \eqref{linear-b-def-dm}.

\begin{assumption}\label{linear-b-ass-f-c0-c1}
One has
$f\in C^1(\R\setminus\{0\})\cap C(\R)$,
and there are
$\kappa>0$ and $K>\kappa$
such that
\[
\abs{f(\tau)-\abs{\tau}^{\kappa}}
=O(\abs{\tau}^{ K }),
\qquad
\abs{\tau f'(\tau)-\kappa\abs{\tau}^\kappa}=O(\abs{\tau}^K);
\qquad
\abs{\tau}\le 1.
\]
If $n\ge 3$, we additionally assume that
$\kappa<2/(n-2)$.
\end{assumption}

In the nonrelativistic limit $\omega\lessapprox m$,
the solitary waves to nonlinear Dirac equation
could be obtained as bifurcations
from the solitary wave solutions
$\varphi\sb\omega(y)e^{-\jj\omega t}$
to the nonlinear Schr\"odinger equation
\begin{eqnarray}\label{linear-b-nls-k}
\jj \dot\psi=-\frac{1}{2m}\Delta\psi-\abs{\psi}\sp{2\kappa}\psi,
\qquad
\psi(t,y)\in\C,
\quad
y\in\R\sp n.
\end{eqnarray}
By \cite{MR0454365,MR695535}
and \cite{MR734575} (for the two-dimensional case),
the stationary nonlinear Schr\"odinger equation
\begin{equation}\label{dirac-existence-def-uu}
-\frac{1}{2m}u
=-\frac{1}{2m}\Delta u
-\abs{u}\sp{2\kappa}u,
\qquad
u(y)\in\R,
\quad
y\in\R^n,
\quad
n\ge 1
\end{equation}
has
a strictly positive spherically symmetric exponentially decaying solution
\begin{eqnarray}\label{linear-b-def-uk}
u_\kappa\in C^2(\R^n)\cap H^1(\R^n)
\end{eqnarray}
(called the ground state)
if and only if
$0<\kappa<2/(n-2)$
(any $\kappa>0$ if $n\le 2$).
Moreover,
\begin{eqnarray}\label{u-k-is-smooth}
u_\kappa\in H^s(\R^n),
\qquad
\forall s<n/2+2
\end{eqnarray}
(see \cite[Lemma A.1]{MR3670258})
and there are constants $0<c_{n,\kappa}<C_{n,\kappa}<\infty$
(which also depend on the value of the constant $m>0$)
such that
\begin{eqnarray}\label{linear-b-estimates-on-uu}
c_{n,\kappa}
\langle y\rangle^{-(n-1)/2}e^{-\abs{y}}
\le
u_\kappa(y)
\le
C_{n,\kappa}
\langle y\rangle^{-(n-1)/2}e^{-\abs{y}},
\qquad
\forall y\in\R^n;
\end{eqnarray}
see e.g. \cite[Lemma 4.5]{MR3670258}.
We set
\begin{equation}\label{dirac-existence-Vhatdef}
 \hat V(t):= u_\kappa(\abs{t}),
\qquad
 \hat U(t):=-(2m)^{-1}\hat V'(t),
\qquad
t\in\R;
\end{equation}
above, a spherically symmetric function
$u_\kappa(y)$, $y\in\R^n$, is understood as a function of $r=\abs{y}$.
By~\eqref{dirac-existence-def-uu},
the functions $\hat V\in C^2(\R)$
and $\hat U\in C^1(\R)$
(which are even and odd, respectively)
satisfy
\begin{equation}\label{linear-b-def-hat-phi}
\frac{1}{2m}\hat V
+
\p\sb t \hat U + \frac{n-1}{t}\hat U
=|\hat V|\sp{2\kappa}\hat V,
\qquad
\p\sb t\hat V+2m\hat U =0,
\qquad
t\in\R,
\end{equation}
where $\hat U(t)/t$
at $t=0$ is understood in the limit sense,
$\lim\sb{t\to 0}\hat U(t)/t=\hat U'(0)$.

In the nonrelativistic limit $\omega\lessapprox m$,
the solitary wave solutions to \eqref{nld}
are obtained as bifurcations from $(\hat V,\,\hat U)$
\cite{MR3670258};
we start with summarizing their asymptotics.

\begin{theorem}\label{dirac-existence-theorem-solitary-waves-c1}
Let $n\in\N$,
$N=2^{[(n+1)/2]}$.
Assume that the function $f$ in \eqref{nld}
satisfies Assumption~\ref{linear-b-ass-f-c0-c1}
with some $\kappa,\,K$.
There is
$\upomega_1\in(m/2,\,m)$
such that for all $\omega\in (\upomega_1,\,m)$
there are solitary wave solutions
$\phi\sb\omega(x)e\sp{- \jj \omega t}$
to \eqref{nld},
such that
$
\phi\sb\omega\in H^2(\R^n,\C^N)\cap C(\R^n,\C^N),
$
$
\omega\in (\upomega_1,\,m),
$
with
\begin{eqnarray}\label{dirac-existence-phi-beta-phi-large}
\phi\sb\omega(x)\sp\ast\beta\phi\sb\omega(x)
\ge
\abs{\phi\sb\omega(x)}^2/2
\qquad
\forall x\in\R^n,
\quad
\forall\omega\in (\upomega_1,\,m),
\end{eqnarray}
and
\[
\norm{\phi\sb\omega}\sb{L^\infty(\R^n,\C^N)}
=O\big((m^2-\omega^2)^{\frac{1}{2\kappa}}\big),
\qquad
\omega\lessapprox m.
\]
More explicitly,
\begin{equation}\label{dirac-existence-sol-forms}
\phi\sb\omega(x)
=\begin{bmatrix}
 v(r,\omega)\xi\\
 \jj u(r,\omega)\,\frac{x}{r}\cdot\bm\upsigma\,\xi
 \end{bmatrix},
\qquad
r=\abs{x},
\qquad
\xi\in\C^{N/2},
\qquad
\abs{\xi}=1,
\end{equation}
where
$
 v(r,\omega) =\epsilon\sp{\frac 1 \kappa}
 V(\epsilon r,\epsilon),
$
$
 u(r,\omega) =\epsilon\sp{1+\frac 1 \kappa}
 U(\epsilon r,\epsilon),
$
$
r\ge 0,
$
with
$\epsilon=\sqrt{m^2-\omega^2}$,
$\lim\sb{r\to 0}u(r,\epsilon)=0$,
and
\[
V(t,\epsilon)=\hat V(t)+\tilde V(t,\epsilon),
\qquad
U(t,\epsilon)=\hat U(t)+\tilde U(t,\epsilon),
\qquad
t\in\R,
\quad
\epsilon>0,
\]
with $\hat V(t)$, $\hat U(t)$
defined in \eqref{dirac-existence-Vhatdef}.

There is $b_0<\infty$ such that
\begin{equation}\label{dirac-existence-phi-asymptotics}
\abs{V(t,\epsilon)}
+
\abs{U(t,\epsilon)}
\le
b_0
\langle t\rangle^{-(n-1)/2}
e^{-\abs{t}},
\qquad
\forall t\in\R,
\quad
\forall\epsilon\in(0,\upepsilon_1).
\end{equation}
There are $\gamma>0$ and $b_1<\infty$
such that
$\tilde W(t,\epsilon)
=\begin{bmatrix}\tilde V(t,\epsilon)
\\
\tilde U(t,\epsilon)\end{bmatrix}
$ satisfies
\begin{equation}\label{dirac-existence-v-u-tilde-small-better}
\norm{
e^{\gamma \langle t\rangle}
\tilde W
}\sb{H^1(\R,\R^2)}
\le b_1\epsilon\sp{2\varkappa},
\qquad
\forall\epsilon\in(0,\upepsilon_1),
\end{equation}
with
$\upepsilon_1=\sqrt{m^2-\upomega_1^2}$
and
\begin{eqnarray}\label{dirac-existence-def-varkappa}
\varkappa:=\min\big(1,{K}/{\kappa}-1\big).
\end{eqnarray}
There is a $C^1$ map
$\omega\mapsto \phi\sb\omega\in H^1(\R^n,\C^N)$,
with
\[
\p\sb\omega\phi\sb\omega\in H^1(\R^n,\C^N),
\qquad
\p\sb\epsilon \tilde W(\cdot,\epsilon)\in
H^1_{\mathrm{even}}(\R)\times H^1_{\mathrm{odd}}(\R)
\cap C^1(\R,\R^2),
\]
where
$H^1_{\mathrm{even}}(\R)$
and
$H^1_{\mathrm{odd}}(\R)$
denote functions from $H^1(\R)$
which are even and odd, respectively;
\begin{equation}\label{linear-b-norm-p-epsilon-w}
\norm{
e^{\gamma\langle t\rangle}
\p\sb\epsilon \tilde W(\cdot,\epsilon)
}\sb{H^1(\R,\R^2)}
=
O(\epsilon^{2\varkappa-1}),
\qquad
\epsilon\in(0,\upepsilon_1),
\end{equation}
and there is $c>0$ such that
\[
\norm{\p\sb\omega\phi\sb\omega}_{L^2(\R^n,\C^N)}^2
=c\epsilon^{-n+\frac{2}{\kappa}}
(1+O(\epsilon^{2\varkappa})),
\qquad
\omega=\sqrt{m^2-\epsilon^2},
\qquad
\epsilon\in(0,\upepsilon_1).
\]
Additionally, assume that
$\kappa,\,K$ from Assumption~\ref{linear-b-ass-f-c0-c1} satisfy
either
$
\kappa<2/n
$,
or
$
\kappa=2/n,
$
$K>4/n$.
Then there is $\omega_c<m$ such that
$\p\sb\omega Q(\phi\sb\omega)<0$
for all $\omega\in(\omega_c,m)$.
If instead
$
\kappa>2/n,
$
then there is
$\omega_c<m$
such that
$\p\sb\omega Q(\phi\sb\omega)>0$
for all $\omega\in(\omega_c,m)$.
\end{theorem}

Above,
\begin{eqnarray}\label{linear-b-def-Q}
Q(\psi)
=\int\sb{\R^n}\psi\sp\ast(t,x)\psi(t,x)\,dx
\end{eqnarray}
is the charge functional
which is conserved due to the $\mathbf{U}(1)$-invariance
of the nonlinear Dirac equation \eqref{nld}.

\begin{remark}
If $f$ satisfies Assumption~\ref{linear-b-ass-f-c0-c1},
then we may assume that
there are $c,\,C>0$
such that
\begin{eqnarray}\label{linear-b-ass-f}
\abs{f(\tau)-\abs{\tau}^{\kappa}}
\le
c\abs{\tau}^{ K },
\quad
\qquad
\abs{f(\tau)}
\le
(c+1)\abs{\tau}^{\kappa},
\ \qquad
\forall\tau\in\R,
\\
\label{linear-b-ass-fp}
\abs{\tau f'(\tau)-\kappa\abs{\tau}^\kappa}
\le
C\abs{\tau}^K,
\qquad
\abs{\tau f'(\tau)}
\le
(C+\kappa)\abs{\tau}^\kappa,
\qquad
\forall\tau\in\R.
\end{eqnarray}\label{linear-b-Rem:NewAssumptionOnf}
Indeed, we could achieve \eqref{linear-b-ass-f}
and \eqref{linear-b-ass-fp}
by modifying $f(\tau)$ for $\abs{\tau}>1$,
and since the
$L^\infty$-norm of the resulting family of solitary waves
goes to zero as $\omega\to m$
(see Theorem~\ref{dirac-existence-theorem-solitary-waves-c1}),
we could then take $\upomega_1\lessapprox m$
sufficiently close to $m$ so that
$\norm{\phi\sb\omega}\sb{L^\infty}$
remains smaller than one
for $\omega\in(\upomega_1,m)$.
\end{remark}


The eigenvalues of the linearization at solitary waves
$\phi\sb\omega e^{-\jj\omega t}$
with $\omega=\omega_j$, $\omega_j\to m$,
can only accumulate to
$\lambda=\pm 2 m\jj$ and $\lambda=0$;
see \cite[Theorem 2.19]{MR3530581}.
We are going to relate
the families of eigenvalues
of the linearized nonlinear Dirac equation
bifurcating from $\lambda=0$
and from $\lambda=\pm 2 m\jj$
to the eigenvalues
of the linearization of the nonlinear Schr\"odinger equation
at a solitary wave.
Given $u_\kappa(x)$,
a strictly positive spherically symmetric
exponentially decaying solution to \eqref{dirac-existence-def-uu},
then $u_\kappa(x)e\sp{-\jj \omega t}$
with $\omega=-\frac{1}{2m}$
is a solitary wave solution to the nonlinear Schr\"odinger equation
\eqref{linear-b-nls-k}.
The linearization at this solitary wave
(see \eqref{def-l0-l1})
is given by
\begin{equation}\label{linear-b-lin-nls}
\p\sb t
\begin{bmatrix}\Re\rho\\\Im\rho\end{bmatrix}
=
\begin{bmatrix}0&\eurl\sb{-}\\-\eurl\sb{+}&0\end{bmatrix}
\begin{bmatrix}\Re\rho\\\Im\rho\end{bmatrix}
\end{equation}
(see e.g. \cite{kolokolov-1973}),
with $\eurl\sb\pm$ defined by
\begin{equation}\label{linear-b-def-l-small-pm}
\eurl\sb{-}=\frac{1}{2m}-\frac{\Delta}{2m}-u_\kappa\sp{2\kappa},
\qquad
\eurl\sb{+}=\frac{1}{2m}-\frac{\Delta}{2m}-(1+2\kappa)u_\kappa\sp{2\kappa},
\qquad
\dom(\eurl\sb\pm)=H^2(\R^n).
\end{equation}

\begin{theorem}[Bifurcations from the origin at $\omega=m$]
\label{linear-b-theorem-scaled-lambda}
Let $n\ge 1$.
Let $f\in C\sp 1(\R\setminus\{0\})\cap C(\R)$
satisfy Assumption~\ref{linear-b-ass-f-c0-c1}
with some values of $\kappa$, $K$.
Let
$\phi\sb\omega e\sp{-\jj \omega t}$,
$\omega\in(\upomega_1,m)$,
be the family of solitary wave solutions to \eqref{nld}
constructed in Theorem~\ref{dirac-existence-theorem-solitary-waves-c1}.

Let $\big(\omega_j\big)\sb{j\in\N}$,
$\omega_j\in(\upomega_1,m)$,
be a sequence such that $\omega_j\to m$,
and assume that
$\lambda\sb j$ is an eigenvalue of the linearization of \eqref{nld}
at the solitary wave $\phi\sb{\omega_j}(x)e^{-\jj \omega_j t}$
(see Section~\ref{linear-b-sect-technical})
and that
\ac{ADDED:}
$\Re\lambda_j\ne 0$,
$
\lambda\sb j\to 0.
$
Denote
\[
\Lambda_j=
\frac{\lambda\sb j}{\epsilon_j^2}\in\C,
\qquad
\epsilon_j=(m\sp 2-\omega_j\sp 2)^{1/2},
\qquad
j\in\N,
\]
and let $\Lambda\sb 0\in\C\cup\{\infty\}$
be an accumulation point
of the sequence $(\Lambda_j)\sb{j\in\N}$.
Then:
\ac{TO BE REMOVED:
\[
\Lambda\sb 0
\in
\sigma\Big(
\begin{bmatrix}0&\eurl\sb{-}\\-\eurl\sb{+}&0\end{bmatrix}
\Big)
\cup\sigma(\jj\eurl\sb{-})
\cup\sigma(-\jj\eurl\sb{-});
\]
in particular, $\Lambda\sb 0\ne\infty$.
If moreover $N=2$, then $\Lambda\sb 0
\in\sigma\Big(
\begin{bmatrix}0&\eurl\sb{-}\\-\eurl\sb{+}&0\end{bmatrix}
\Big)$.
}
\begin{enumerate}
\item\label{linear-b-theorem-scaled-lambda-two}
\ac{REMOVED:
If $\Re\lambda\sb j\ne 0$ for all $j\in\N$,
then}
$
\Lambda\sb 0
\in
\sigma\sb{\mathrm{p}}
\Big(
\begin{bmatrix}0&\eurl\sb{-}\\-\eurl\sb{+}&0\end{bmatrix}
\Big)
\cap
\R
$.
\ac{Added:}
In particular, $\Lambda\sb 0\ne\infty$.
\item\label{linear-b-theorem-scaled-lambda-three}
\ac{REMOVED: If $\Re\lambda\sb j\ne 0$ for all $j\in\N$,
then
}
$\Lambda\sb 0=0$ could be an accumulation point
of $\big(\Lambda_j\big)_{j\in\N}$
only when $\kappa=2/n$
and $\p\sb\omega Q(\phi\sb\omega)>0$
for $\omega\in(\omega\sb\ast,m)$,
with some $\omega\sb\ast<m$.
If, moreover, $\Lambda\sb j\to\Lambda_0=0$,
then
$\lambda\sb j\in\R$ for all but finitely many $j\in\N$.
\end{enumerate}
\end{theorem}


In other words,
as long as $\p\sb\omega Q(\phi\sb\omega)<0$
for $\omega\lessapprox m$,
there can be no linear instability due to
bifurcations from the origin:
there would be no
eigenvalues $\lambda\sb j$
of the linearization
at solitary waves with $\omega_j\to m$
such that
$\Re\lambda\sb j\ne 0$, $\lambda\sb j\to 0$.
We prove Theorem~\ref{linear-b-theorem-scaled-lambda}
in Section~\ref{linear-b-sect-vb-zero}.

\begin{theorem}[Bifurcations from $\pm 2 m \jj$ at $\omega=m$]
\label{linear-b-theorem-2m}
Let $n\ge 1$.
Let $f\in C\sp 1(\R\setminus\{0\})\cap C(\R)$
satisfy Assumption~\ref{linear-b-ass-f-c0-c1}
with some values of $\kappa$, $K$.
Let
$\phi\sb\omega(x) e\sp{-\jj \omega t}$,
$\omega\in(\upomega_1,m)$,
be the family of solitary wave solutions to \eqref{nld}
constructed in Theorem~\ref{dirac-existence-theorem-solitary-waves-c1}.

Let
$\big(\omega_j\big)\sb{j\in\N}$,
$\omega_j\in(\upomega_1,m)$,
be a sequence such that
$\omega_j\to m$ and
assume that
$\lambda\sb j$ is an eigenvalues of the linearization of \eqref{nld}
at the solitary wave $\phi\sb{\omega_j}(x) e^{-\jj \omega_j t}$
(see Section~\ref{linear-b-sect-technical})
and that
$
\lambda\sb j\to 2m\jj
$.
Denote
\begin{equation}\label{linear-b-def-Lambda-j-0}
Z_j=
-\frac{2\omega_j+\jj\lambda_j}{\epsilon_j^2}
\in\C,
\qquad
\epsilon_j=(m^2-\omega_j^2)^{1/2},
\qquad
j\in\N,
\end{equation}
and let $Z_0\in\C\cup\{\infty\}$
be an accumulation point
of the sequence $(Z_j)\sb{j\in\N}$.
Then:
\begin{enumerate}
\item\label{linear-b-theorem-2m-1}
$Z_0
\in
\{\frac{1}{2m}\}\cup
\sigma\sb{\mathrm{d}}(\eurl\sb{-})
$.
In particular,
$Z_0\ne\infty$.
\item\label{linear-b-theorem-2m-2}
If the edge of the essential
spectrum of $\eurl\sb{-}$ at $1/(2m)$
is a regular point of the essential spectrum of $\eurl\sb{-}$
(neither a virtual level nor an eigenvalue),
then
$Z_0\ne 1/(2m)$.
\item\label{linear-b-theorem-2m-1a}
If $Z_j\to Z_0$
and $Z_0=0$,
then
$\lambda_j=2\omega_j\jj$
for all but finitely many $j\in\N$.
\end{enumerate}
\end{theorem}

We note that,
according to the definition \eqref{linear-b-def-Lambda-j-0},
\[
\lambda_j=\jj(2\omega_j+\epsilon_j^2 Z_j),
\qquad
j\in\N.
\]

\begin{remark}
For definitions and
more details on virtual levels
(also known in this context as
\emph{threshold resonances}
and
\emph{zero-energy resonances}),
see e.g.
\cite{MR544248} (for the case $n=3$),
\cite[Sections 5, 6]{MR1841744} (for $n=1,\,2$),
and
\cite[Sections 5.2 and 7.4]{MR2598115}
(for $n=1$ and $n\ge 3$).
For the practical purposes, in the case $V\not\equiv 0$,
$n\le 2$,
the threshold point is a \emph{virtual level}
if it corresponds to a \emph{virtual state};
a virtual state, in turn,
can be characterized as an $L^\infty$-solution
to $(-\Delta+V)\Psi=0$ which does not belong to $L^2$
(see e.g. \cite[Theorem 5.2 and Theorem 6.2]{MR1841744});
for $n=3$,
the virtual levels
can be characterized as $H^1_{-s}$-solutions
with some (and hence any) $s\in(1/2,3/2]$
\cite[Lemma 3.2]{MR544248}.
\end{remark}

\begin{remark}
We do not need to study the case
$\lambda\sb j\to -2m\jj$
since the eigenvalues of the linearization
at solitary waves are symmetric with respect
to real and imaginary axes; see e.g. \cite{MR3530581}.
\end{remark}

In other words, as long as
$\eurl\sb{-}$ has no nonzero point spectrum
and the threshold $z=1/(2m)$ is a regular point of the essential spectrum,
there can be no nonzero-real-part eigenvalues near $\pm 2m\jj$
in the nonrelativistic limit $\omega\lessapprox m$.
We prove
Theorem~\ref{linear-b-theorem-2m} in Section~\ref{linear-b-sect-vb-thresholds-1}.

Let us focus on the most essential point of our work.
It is of no surprise that the behaviour of eigenvalues
of the linearized operator
near $\lambda=0$,
in the nonrelativistic limit $\omega\lessapprox m$,
follows closely the pattern which one finds in the
nonlinear Schr\"odinger equation with the same nonlinearity;
this is the content of Theorem~\ref{linear-b-theorem-scaled-lambda}.
In its proof
in Section~\ref{linear-b-sect-vb-zero},
we will make this rigorous
by applying the rescaling and the Schur complement
to the linearization of the nonlinear Dirac equation
and recovering in the nonrelativistic limit
$\omega\to m$
the linearization of the nonlinear Schr\"odinger equation.
Consequently, the absence of eigenvalues with nonzero real part
in the vicinity of $\lambda=0$
is controlled by the Kolokolov condition
$
\p\sb\omega Q(\phi\sb\omega)<0,
$
$
\omega\lessapprox m
$
(see \cite{kolokolov-1973}).
In other words, in the limit $\omega\lessapprox m$,
the eigenvalue families $\lambda\sb a(\omega)$
of the nonlinear Dirac equation
linearized at a solitary wave
which satisfy $\lambda\sb a(\omega)\to 0$ as $\omega\to m$
are merely deformations
of the eigenvalue families $\lambda\sb a\sp{\mathrm{NLS}}(\omega)$
of the nonlinear Schr\"odinger equation
with the same nonlinearity
(linearized at corresponding solitary waves).

On the other hand,
by \cite[Theorem 2.19]{MR3530581},
there could be eigenvalue families of the
linearization of the nonlinear Dirac operator
which satisfy
$\lim\sb{\omega\to m}\lambda\sb a(\omega)=\pm 2 m\jj$.
Could these eigenvalues go off the imaginary axis
into the complex plane?
Theorem~\ref{linear-b-theorem-2m}
states that in the Soler model,
under certain spectral assumptions,
this scenario could be excluded.
Rescaling and the Schur complement approach
will show that there could be at most $N/2$
families of eigenvalues with nonnegative real part
(with $N$ being the number of spinor components)
bifurcating from each of $\pm 2m\jj$
when $\omega=m$;
this essentially follows from
Section~\ref{linear-b-sect-nonlinear} below
(see Lemma~\ref{linear-b-lemma-m-ast} below).
At the same time,
the linearization at a solitary wave
has eigenvalues $\lambda=\pm 2\omega\jj$,
each of multiplicity (at least) $N/2$;
this follows from the existence of bi-frequency solitary waves
\cite{MR3842864} in the Soler model.
Namely,
if there is a solitary wave solution
of the form \eqref{dirac-existence-sol-forms}
to the nonlinear Dirac equation \eqref{nld},
then there are also bi-frequency solitary wave solutions of the form
\begin{eqnarray}\label{linear-b-def-psi-xi-eta}
\psi(t,x)
=
\phi\sb{\omega,\xi}(x)
e^{-\jj\omega t}
+
\chi\sb{\omega,\eta}(x)
e^{\jj\omega t},
\quad
\xi,\,\eta\in\C^{N/2},
\ \ \abs{\xi}^2-\abs{\eta}^2=1,
\end{eqnarray}
where
\begin{eqnarray}\label{linear-b-s1}
\phi\sb{\omega,\xi}(x)
=
\begin{bmatrix}
v(r,\omega)\xi
\\
\jj\frac{x}{r}\cdot\bm\upsigma\,u(r,\omega)\xi
\end{bmatrix},
\qquad
\chi\sb{\omega,\eta}
=
\begin{bmatrix}
-\jj\frac{x}{r}\cdot\bm\upsigma\sp\ast\,u(r,\omega)\eta
\\
v(r,\omega)\eta
\end{bmatrix},
\end{eqnarray}
with $v(r,\omega)$ and $u(r,\omega)$
from \eqref{dirac-existence-sol-forms}.
For more details
and the relation to $\mathbf{SU}(1,1)$ symmetry group
of the Soler model, see \cite{MR3842864}.
The form of these bi-frequency solitary waves
allows us to conclude that
$\pm 2\omega\jj$ are eigenvalues of the
linearization at a solitary wave
of multiplicities $N/2$
(see Lemma~\ref{linear-b-lemma-two-omega-n} below).
Thus, we know exactly what happens to the eigenvalues
which might bifurcate from $\pm 2m\jj$:
they all turn into $\pm 2\omega\jj$ and stay on the imaginary axis.

\begin{remark}
The spectral stability properties
of bi-frequency solitary waves \eqref{linear-b-def-psi-xi-eta}
can be related to the spectral stability of standard,
one-frequency solitary waves \eqref{dirac-existence-sol-forms};
see \cite{MR3842864}.
\end{remark}

As the matter of fact,
since the points $\pm 2 m\jj$ belong to the essential spectrum,
the perturbation theory can not be applied immediately
for the analysis of families of eigenvalues
which bifurcate from $\pm 2 m\jj$.
We use the limiting absorption principle
to rewrite the eigenvalue problem
in such a way that the eigenvalue
no longer
appears as embedded.
When doing so, we find out that the eigenvalues
$\pm 2\omega \jj$
become isolated solutions to the \emph{nonlinear eigenvalue problem},
known as the \emph{characteristic roots}
(or, informally, \emph{nonlinear eigenvalues}).
To make sure that we end up with \emph{isolated}
nonlinear eigenvalues,
we need to be able to vary the spectral parameter
to both sides of the imaginary axis.
To avoid the jump of the resolvent at the essential spectrum,
we use the analytic continuation
of the resolvent in the exponentially weighted spaces.
Finally, we show that
under the circumstances of the problem
the \emph{isolated nonlinear eigenvalues}
can not bifurcate off the imaginary axis.
This part is based
on the theory of the characteristic roots of
holomorphic operator-valued functions
\cite{MR0041353,MR0300125,MR0312306,MR0313856};
see also
\cite{MR971506} and \cite[Chapter I]{MR1995773}.
Unlike in the above references,
we have to deal with unbounded operators.
As a result, we find it easier to develop our own approach;
see
Lemma~\ref{linear-b-lemma-characteristic-roots}
in Section~\ref{linear-b-sect-nonlinear}.
It is of utmost importance to us that
we have the explicit description
of eigenvectors
corresponding to $\pm 2\omega\jj$ eigenvalues
(see Lemma \ref{linear-b-lemma-two-omega-n}).
Knowing that
$\pm 2\omega\jj$ are eigenvalues
of the linearization operator
of particular multiplicity,
we will be able to conclude
that there could be no other eigenvalue families
starting from $\pm 2m\jj$;
in particular, no families of eigenvalues with nonzero real part.


We use
Theorems~\ref{linear-b-theorem-2m},
and~\ref{linear-b-theorem-scaled-lambda}
to prove the
spectral stability of small amplitude solitary waves.

\begin{theorem}[Spectral stability of solitary waves
of the nonlinear Dirac equation]
\label{linear-b-theorem-dirac-stability}
Let $n\ge 1$.
Let $f\in C^1(\R\setminus\{0\})\cap C(\R)$
satisfy Assumption~\ref{linear-b-ass-f-c0-c1},
with
$\kappa,\,K$ such that
either
$0<\kappa<2/n$, $K>\kappa$
(\emph{charge-subcritical case}),
or
$\kappa=2/n$
and
$K>4/n$
(\emph{charge-critical case}).
Further, assume that
$
\sigma\sb{\mathrm{d}}(\eurl\sb{-})=\{0\},
$
and that the threshold
$z=1/(2m)$
of the operator $\eurl\sb{-}$
is a regular point of its essential spectrum.
Let $\phi\sb\omega(x) e\sp{- \jj \omega t}$,
$\phi\sb\omega\in H\sp 2(\R^n,\C\sp N)$,
$\omega\lessapprox m$,
be the family of solitary waves
constructed in Theorem~\ref{dirac-existence-theorem-solitary-waves-c1}.
Then there is $\omega\sb\ast\in(0,m)$
such that for each
$\omega\in(\omega\sb\ast,m)$
the corresponding solitary wave is spectrally stable.
\end{theorem}

\begin{remark}
We note that,
if either
$\kappa<2/n$, $K>\kappa$,
or
$\kappa=2/n$,
$K>4/n$,
then,
by Theorem~\ref{dirac-existence-theorem-solitary-waves-c1},
for $\omega\lessapprox m$
one has $\p\sb\omega Q(\phi\sb\omega)<0$,
which is formally
in agreement with
the Kolokolov stability criterion~\cite{kolokolov-1973}.
\end{remark}

\begin{proof}
We consider the family of solitary wave solutions
$\phi\sb\omega e\sp{- \jj \omega t}$,
$\omega\lessapprox m$,
described in Theorem~\ref{dirac-existence-theorem-solitary-waves-c1}.
Let us assume that there is a sequence
$\omega_j\to m$
and a family of eigenvalues
$\lambda\sb j$
of the linearization at solitary waves
$\phi\sb{\omega_j}e^{-\jj\omega_j t}$
such that
$
\Re\lambda\sb j\ne 0.
$

By \cite[Theorem 2.19]{MR3530581},
the only possible accumulation points of the sequence
$\big(\lambda\sb j\big)\sb{j\in\N}$
are $\lambda=\pm 2 m\jj$ and $\lambda=0$.
By Theorem~\ref{linear-b-theorem-2m},
as long as
$\sigma\sb{\mathrm{d}}(\eurl\sb{-})=\{0\}$
and the threshold of $\eurl\sb{-}$
is a regular point of the essential spectrum,
$\lambda=\pm 2 m \jj$
can not be an accumulation point
of nonzero-real-part eigenvalues;
it remains to consider the case $\lambda\sb j\to \lambda=0$.
By Theorem~\ref{linear-b-theorem-scaled-lambda}~\itref{linear-b-theorem-scaled-lambda-two},
if
$\Re\lambda\sb j\ne 0$
and
$\Lambda\sb 0$
is an accumulation point of the sequence
$\Lambda_j={\lambda\sb j}/(m\sp 2-\omega_j\sp 2)$,
then
\begin{eqnarray}\label{linear-b-in}
\Lambda\sb 0
\in
\sigma\sb{\mathrm{p}}\Big(
\begin{bmatrix}0&\eurl\sb{-}\\-\eurl\sb{+}&0\end{bmatrix}
\Big)
\cap
\R.
\end{eqnarray}
Above, $\eurl\sb\pm$
(see \eqref{linear-b-def-l-small-pm})
correspond to the linearization at a solitary wave
$u_\kappa(x)e^{-\jj\omega t}$
with $\omega=-1/(2m)$
of the nonlinear Schr\"{o}dinger equation \eqref{linear-b-nls-k}.
For $\kappa\le 2/n$,
the spectrum of the linearization
of the corresponding NLS at a solitary wave is purely imaginary:
\[
\sigma\sb{\mathrm{p}}\Big(
\begin{bmatrix}0&\eurl\sb{-}\\-\eurl\sb{+}&0\end{bmatrix}
\Big)\subset\jj\R.
\]
We conclude from \eqref{linear-b-in}
that one could only have $\Lambda\sb 0=0$;
by Theorem~\ref{linear-b-theorem-scaled-lambda}~\itref{linear-b-theorem-scaled-lambda-three},
this would require that
$\kappa=2/n$
and
$\p\sb\omega Q(\phi\sb\omega)>0$ for $\omega\lessapprox m$.
On the other hand,
as long as $\kappa=2/n$ and $K>4/n$,
Theorem~\ref{dirac-existence-theorem-solitary-waves-c1}
yields
$\p\sb\omega Q(\phi\sb\omega)<0$ for $\omega\lessapprox m$,
hence $\Lambda\sb 0=0$ would not be possible.
We conclude that
there is no family of eigenvalues $(\lambda\sb j)\sb{j\in\N}$
with $\Re\lambda\sb j\ne 0$.
\end{proof}

\begin{remark}\label{linear-b-remark-not-too-small}
We can not claim the spectral stability
for all subcritical values $\kappa\in(0,2/n)$:
We need to require that $\sigma\sb{\mathrm{d}}(\eurl\sb{-})$
does not have nonzero eigenvalues
(this requirement is known as the ``gap condition''),
while  for small values of $\kappa>0$ the operator $\eurl\sb{-}$
has a rich discrete spectrum,
and potentially any of its eigenvalues
could become a source of nonzero-real-part eigenvalues
of linearization of the nonlinear Dirac at a small amplitude solitary wave.
Such cases would require a more detailed analysis.
In particular, in one spatial dimension,
we only prove the spectral stability for
$1<\kappa\le 2$;
the critical, quintic case ($\kappa=2$)
is included, but our proof formally
does not cover the cubic case $\kappa=1$
because of the virtual level at the threshold
in the spectrum of the linearization operator
corresponding to the one-dimensional cubic NLS.
The analytic proof
of the ``gap condition''
for the one-dimensional case
is given in \cite[Theorem 3.1]{chang2007}.

In higher dimensions, the situation is more difficult: there,
we are not aware of the analytic results on the gap condition.
Our numerics show that
$
\sigma\sb{\mathrm{p}}(\eurl\sb{-})=\{0\}
$
and the threshold $1/(2m)$ is a regular point
of the essential spectrum of $\eurl\sb{-}$,
with $\eurl\sb{-}$ corresponding to
the nonlinear Schr\"odinger equation
in $\R^n$
(thus the spectral hypotheses of
Theorem~\ref{linear-b-theorem-scaled-lambda}
and
Theorem~\ref{linear-b-theorem-2m}
are satisfied)
as long as
\[
\kappa>\kappa_n,
\qquad
\mbox{where}
\quad
\kappa_1=
1,\quad
\kappa_2\approx 0.621,\quad
\kappa_3\approx 0.461\,.
\]
Let us mention the related  numerical results.
In two dimensions,
according to \cite[Fig. 4]{chang2007},
indeed $\eurl\sb{-}$ does not have nonzero eigenvalues
for $\kappa>\kappa_2$ with $\kappa_2\approx 0.6$.
In three dimensions,
according to \cite[Fig. 3]{demanet2006numerical},
$\eurl\sb{-}$ does not have nonzero eigenvalues
for $\kappa>\kappa_3$ with some $\kappa_3< 2/3$.
\end{remark}

\section{Technical results}
\label{linear-b-sect-technical}
\subsection{The linearization operator}


We assume that
$f\in C\sp 1(\R\setminus\{0\})\cap C(\R)$
satisfies Assumption~\ref{linear-b-ass-f-c0-c1}
(recall Remark~\ref{linear-b-Rem:NewAssumptionOnf}).
Let $\phi\sb\omega(x)e^{-\jj\omega t}$
be a solitary wave solution to equation \eqref{nld}
of the form \eqref{dirac-existence-sol-forms},
with $\omega\in(\upomega_1,m)$,
where $\upomega_1\in(m/2,m)$
is from Theorem~\ref{dirac-existence-theorem-solitary-waves-c1}.
Consider the solution to \eqref{nld}
in the form of the Ansatz
$\psi(t,x)=(\phi\sb\omega(x)+\rho(t,x))e\sp{-\jj \omega t}$,
so that $\rho(t,x)\in\C\sp N$
is a small perturbation of the solitary wave.
The linearization at
the solitary wave $\phi\sb\omega(x)e\sp{-\jj \omega t}$
(the linearized equation on $\rho$)
is given by
\begin{equation}\label{linear-b-def-cal-l}
\jj\p\sb t\rho=\mathcal{L}(\omega)\rho,
\qquad
\mathcal{L}(\omega)
=D\sb m
-\omega-f(\phi\sb\omega\sp\ast\beta\phi\sb\omega)\beta
-2f'(\phi\sb\omega\sp\ast\beta\phi\sb\omega)
\Re(\phi\sb\omega\sp\ast\beta\,\,\cdot\,)
\beta\phi\sb\omega.
\end{equation}

\begin{remark}
Even if $f'(\tau)$ is not continuous at $\tau=0$,
there are no singularities in \eqref{linear-b-def-cal-l}
for solitary waves
with $\omega\lessapprox m$
constructed in Theorem~\ref{dirac-existence-theorem-solitary-waves-c1}:
in view of the bound
$f'(\tau)=O(\abs{\tau}^{\kappa-1})$
(see \eqref{linear-b-ass-fp})
and the bound from below
$\phi\sb\omega\sp\ast\beta\phi\sb\omega
\ge\abs{\phi\sb\omega}^2/2$
(see Theorem~\ref{dirac-existence-theorem-solitary-waves-c1}),
the last term in \eqref{linear-b-def-cal-l}
could be estimated by $O(\abs{\phi\sb\omega}^{2\kappa})$.
\end{remark}

Since
$\mathcal{L}(\omega)$
is not $\C$-linear,
in order to work with $\C$-linear operators,
we introduce the following self-adjoint matrices of size $2N\times 2N$:
\begin{equation}\label{linear-b-upalpha-upbeta}
\begin{array}{l}
\bmupalpha\sp i
=
\begin{bmatrix}
\Re\alpha\sp i&-\Im\alpha\sp i\\\Im\alpha\sp i&\Re\alpha\sp i
\end{bmatrix},
\qquad
1\le i\le n;
\\[3ex]
\bmupbeta
=
\begin{bmatrix}\Re\beta&-\Im\beta\\\Im\beta&\Re\beta\end{bmatrix},
\qquad
\eubJ
=\left[\begin{matrix}0&I_{N}\\-I_{N}&0\end{matrix}\right],
\end{array}
\end{equation}
where the real part of a matrix is the matrix
made of the real parts of its entries
(and similarly for the imaginary part of a matrix).
We denote
\begin{eqnarray}\label{linear-b-def-bmupphi}
\bmupphi\sb\omega(x)
=\begin{bmatrix}\Re\phi\sb\omega(x)\\\Im\phi\sb\omega(x)\end{bmatrix}
\in\R\sp{2N}.
\end{eqnarray}
We mention that
$
\eubJ \bmupalpha\cdot\nabla_x+\bmupbeta m
$
is the operator which corresponds to $D\sb m$
acting on $\R\sp{2N}$-valued functions.
Introduce the operator
\begin{equation}\label{linear-a-def-ll}
\eubL(\omega)
=
\eubJ \bmupalpha\cdot\nabla_x+\bmupbeta m
-\omega
-f(\bmupphi\sb\omega\sp\ast\bmupbeta\bmupphi\sb\omega)\bmupbeta
-2(\bmupphi\sb\omega\sp\ast\bmupbeta\,\,\cdot\,)
f'(\bmupphi\sb\omega\sp\ast\bmupbeta\bmupphi\sb\omega)
\bmupbeta\bmupphi\sb\omega.
\end{equation}
We extend this operator
by $\C$-linearity
from $L\sp 2(\R\sp n,\R\sp{2N})$
onto
\[
L\sp 2(\R\sp n,\C\sp{2N})
=L\sp 2(\R\sp n,\C\otimes\sb{\R}\R\sp{2N}),
\]
with domain
$\dom(\eubL(\omega))=H\sp 1(\R\sp n,\C\sp{2N})$
where $\eubL(\omega)$ is self-adjoint.
The linearization at the solitary wave
in \eqref{linear-b-def-cal-l}
takes the form
\begin{equation}\label{linear-b-nld-linear}
\p\sb t\bmuprho
=\eubJ\eubL(\omega)\bmuprho,
\qquad
\bmuprho(t,x)=\left[\begin{matrix}\Re\rho(t,x)\\\Im\rho(t,x)\end{matrix}\right]
\in\R\sp{2N},
\end{equation}
with
$\eubJ$ from \eqref{linear-b-upalpha-upbeta}
and with $\eubL$ from \eqref{linear-a-def-ll}.
By Weyl's theorem on the essential spectrum,
the essential spectrum of $\eubJ\eubL(\omega)$
is purely imaginary,
with the edges at the thresholds $\pm (m-\abs{\omega})\jj$;
see \cite{MR3530581} for more details.
There are also embedded thresholds $\pm (m+\abs{\omega})\jj$.

For the reader's convenience,
we record the results on the spectral subspace
of $\eubJ\eubL(\omega)$
corresponding to the zero eigenvalue:

\begin{lemma}\label{linear-b-lemma-dim-ker-nld}
$\ \ \displaystyle
\Span\left\{
\eubJ\bmupphi\sb\omega,
\ \p_{x^i}\bmupphi\sb\omega
\sothat
1\le i\le n
\right\}
\subset
\ker(\eubJ\eubL(\omega)),
$
\[
\Span\left\{
\eubJ\bmupphi\sb\omega,
\ \p\sb\omega\bmupphi\sb\omega,
\ \p_{x^i}\bmupphi\sb\omega,
\ \omega x\sp i\eubJ\bmupphi\sb\omega
-\frac 1 2\bmupalpha\sp i\bmupphi\sb\omega
\sothat
1\le i\le n
\right\}
\subset
\frakL(\eubJ\eubL(\omega)).
\]
\end{lemma}

The proof is in~\cite{MR3530581}.
We mention the following relations
(see e.g. \cite{MR3530581}):
\begin{equation}\label{linear-b-l-phi-phi}
\eubJ\eubL\eubJ\bmupphi\sb\omega=0,
\qquad
\eubJ\eubL\p\sb\omega\bmupphi\sb\omega=\eubJ\bmupphi\sb\omega;
\qquad
\eubJ\eubL\p_{x^i}\bmupphi\sb\omega=0,
\qquad
\eubJ\eubL
\Big(
\omega x^i\eubJ\bmupphi\sb\omega
-\frac{1}{2}\alpha^i\bmupphi\sb\omega
\Big)
=\p_{x^i}\bmupphi\sb\omega.
\end{equation}

\begin{remark}
This lemma does not give the complete characterization
of the kernel of $\eubJ\eubL(\omega)$;
for example, there are also eigenvectors due to the
rotational invariance
and purely imaginary eigenvalues passing through $\lambda=0$
at some particular values of $\omega$ \cite{PhysRevLett.116.214101}.
We also refer to the proof of Proposition~\ref{linear-b-prop-jl-limit} below,
which gives the dimension of the generalized null space
for $\omega\lessapprox m$.
\end{remark}

\begin{lemma}\label{linear-b-lemma-two-omega-n}
The operator $\mathcal{L}(\omega)$
from \eqref{linear-b-def-cal-l}
corresponding to the linearization
at a (one-frequency) solitary wave
has the eigenvalue $-2\omega$
of geometric multiplicity larger than or equal to $N/2$,
with the eigenspace containing the subspace
$
\Span\big\{
\chi\sb{\omega,\eta}
\sothat
\eta\in\C^{N/2}
\big\},
$
with $\chi\sb{\omega,\eta}$
defined in \eqref{linear-b-s1}.
The operator
$\eubJ\eubL(\omega)$
of the linearization at the solitary wave
(see \eqref{linear-b-nld-linear})
has eigenvalues $\pm 2\omega\jj$
of geometric multiplicity larger than or equal to $N/2$.
\end{lemma}

\begin{proof}
This could be concluded from the expressions
for the bi-frequency solitary waves
\eqref{linear-b-def-psi-xi-eta}
or verified directly.
Indeed, one has
\[
-2\omega
\chi\sb{\omega,\eta}
=
(-\jj\bm\alpha\cdot\nabla_x
+(m-f)\beta-\omega)
\chi\sb{\omega,\eta},
\]
and then one takes into account that
$
\phi\sb{\omega}(x)\sp\ast
\beta
\chi\sb{\omega,\eta}(x)
=0,
$
so that the last term in the expression \eqref{linear-b-def-cal-l}
vanishes when applied to $\chi\sb{\omega,\eta}$.
\end{proof}


\subsection{Scaling, projections, and estimates on the effective potential}

Let
\begin{eqnarray}\label{linear-b-def-projectors}
\pi\sb{P}=(1+\bmupbeta)/2,
\qquad
\pi\sb{A}=(1-\bmupbeta)/2,
\qquad
\pi\sp\pm=(1\mp\jj\eubJ)/2
\end{eqnarray}
be the projectors
corresponding to $\pm 1\in\sigma(\bmupbeta)$
(``particle'' and ``antiparticle'' components)
and to $\pm\jj\in\sigma(\eubJ)$
($\C$-antilinear and $\C$-linear).
These projectors commute;
we denote their compositions by
\begin{eqnarray}\label{linear-b-def-projectors-2}
\pi\sp\pm\sb{P}=\pi\sp\pm\circ\pi\sb{P},
\qquad
\pi\sp\pm\sb{A}=\pi\sp\pm\circ\pi\sb{A}.
\end{eqnarray}
With $\xi\in\C^{N/2}$, $\abs{\xi}=1$,
from Theorem~\ref{dirac-existence-theorem-solitary-waves-c1} (see \eqref{dirac-existence-sol-forms}),
we denote
\begin{eqnarray}
\label{linear-b-def-Xi}
\bmXi=\begin{bmatrix}\Re\xi\\0\\\Im\xi\\0\end{bmatrix}
\in\R^{2N},
\qquad
\abs{\bmXi}=1.
\end{eqnarray}
For the future convenience, we introduce the orthogonal projection
onto $\bmXi$:
\begin{eqnarray}\label{linear-b-def-Pi}
\Pi_\bmXi=\bmXi\langle\bmXi,\,\cdot\,\rangle\sb{\C^{2N}}\in\End(\C^{2N}).
\end{eqnarray}
We note that, since $\bmupbeta\bmXi=\bmXi$,
one has
\begin{eqnarray}\label{linear-b-p-p-p}
\Pi_\bmXi\circ\pi\sb{P}=\pi\sb{P}\circ\Pi_\bmXi=\Pi_\bmXi,
\qquad
\Pi_\bmXi\circ\pi\sb{A}=\pi\sb{A}\circ\Pi_\bmXi=0.
\end{eqnarray}

Denote by $\bmuppsi\sb j\in H^1(\R^n,\C^{2N})$
eigenfunctions
which correspond to the eigenvalues
$\lambda\sb j\in\sigma\sb{\mathrm{p}}(\eubJ\eubL(\omega_j))$;
thus, one has
\begin{equation}\label{linear-b-j-d-m}
\eubL(\omega_j)\bmuppsi\sb j
=
\big(
\eubJ\bmupalpha\cdot\nabla_x
+\bmupbeta m-\omega_j+\eub{v}(\omega_j)
\big)\bmuppsi\sb j
=-\eubJ\lambda\sb j\bmuppsi\sb j,
\qquad
j\in\N,
\end{equation}
where the potential $\eub{v}(\omega)$ is defined by (cf. \eqref{linear-a-def-ll})
\begin{equation}\label{linear-b-def-eub-v}
\eub{v}(x,\omega)\bmuppsi(x)
=-f(\bmupphi\sb\omega\sp\ast\bmupbeta\bmupphi\sb\omega)
\bmupbeta\bmuppsi
-2\bmupphi\sb\omega\sp\ast\bmupbeta\bmuppsi\,
f'(\bmupphi\sb\omega\sp\ast\bmupbeta\bmupphi\sb\omega)
\bmupbeta\bmupphi\sb\omega.
\end{equation}

We will use the notations
\[
y=\epsilon_j x\in\R^n,
\]
where $\epsilon_j=\sqrt{m^2-\omega_j^2}$,
so that
$\eubJ\bmupalpha\cdot\nabla_x
=\epsilon_j\eubJ\bmupalpha\cdot\nabla_y
=:\epsilon_j\eubD_0$
and $\Delta=\epsilon_j^2\Delta\sb y$.
For $\eub{v}$ from \eqref{linear-b-def-eub-v},
define the potential
$\eubV(y,\epsilon)\in\End(\C\sp{2N})$
by
\begin{equation}\label{linear-b-def-eub-w}
\eubV(y,\epsilon)
=
\epsilon^{-2}\eub{v}(\epsilon^{-1}y,\omega),
\qquad
\omega=\sqrt{m^2-\epsilon^2},
\quad
y\in\R^n,
\quad
\epsilon\in(0,\upepsilon_1).
\end{equation}
We define
$
\bmPsi\sb j(y)
=\epsilon_j^{-n/2}\bmuppsi\sb j(\epsilon_j^{-1} y).
$
With $\eubV$ from \eqref{linear-b-def-eub-w},
$\eubL(\omega_j)$ is given by
\begin{equation}\label{linear-b-def-big-l}
\eubL(\omega_j)=
\epsilon_j
\eubD_0+\bmupbeta m-\omega_j
+\epsilon_j^2\eubV(\omega_j),
\qquad
j\in\N,
\end{equation}
and the relation \eqref{linear-b-j-d-m}
takes the form
\begin{equation}\label{linear-b-j-d-m-w}
\big(
\epsilon_j
\eubD_0+\bmupbeta m-\omega_j
+\eubJ\lambda\sb j
+\epsilon_j^2\eubV(\omega_j)
\big)\bmPsi\sb j
=0,
\qquad
j\in\N.
\end{equation}
We project \eqref{linear-b-j-d-m-w}
onto ``particle'' and ``antiparticle'' components
and
onto the $\mp\jj$ spectral subspaces of $\eubJ$
with the aid of projectors \eqref{linear-b-def-projectors}:
\begin{eqnarray}
\epsilon_j\eubD_0\pi\sp{-}\sb{A}\bmPsi\sb j
+(m-\omega_j-\jj\lambda\sb j)\pi\sp{-}\sb{P}\bmPsi\sb j
+\epsilon_j^2\pi\sp{-}\sb{P}\eubV\bmPsi\sb j
=0,
\label{linear-b-4p-minus}
\\
\epsilon_j\eubD_0\pi\sp{-}\sb{P}\bmPsi\sb j
-(m+\omega_j+\jj\lambda\sb j)\pi\sp{-}\sb{A}\bmPsi\sb j
+\epsilon_j^2\pi\sp{-}\sb{A}\eubV\bmPsi\sb j
=0,
\label{linear-b-4a-minus}
\\
\epsilon_j\eubD_0\pi\sp{+}\sb{A}\bmPsi\sb j
+(m-\omega_j+\jj\lambda\sb j)\pi\sp{+}\sb{P}\bmPsi\sb j
+\epsilon_j^2\pi\sp{+}\sb{P}\eubV\bmPsi\sb j
=0,
\label{linear-b-4p-plus}
\\
\epsilon_j\eubD_0\pi\sp{+}\sb{P}\bmPsi\sb j
-(m+\omega_j-\jj\lambda\sb j)\pi\sp{+}\sb{A}\bmPsi\sb j
+\epsilon_j^2\pi\sp{+}\sb{A}\eubV\bmPsi\sb j
=0.
\label{linear-b-4a-plus}
\end{eqnarray}

We need several estimates
on the potential $\eubV$.

\begin{lemma}\label{linear-b-lemma-w}
There is $C>0$ such that
for all $y\in\R^n$ and all $\epsilon\in(0,\upepsilon_1)$
one has the following pointwise bounds:
\begin{eqnarray*}
&&
\norm{\eubV(y,\epsilon)}\sb{\End(\C^{2N})}
\le C \abs{u_\kappa(y)}^{2\kappa},
\\[1ex]
&&
\norm{
\pi\sb{P}\circ\eubV(y,\epsilon)\circ\pi\sb{A}}\sb{\End(\C^{2N})}
+
\norm{
\pi\sb{A}\circ\eubV(y,\epsilon)\circ\pi\sb{P}}\sb{\End(\C^{2N})}
\le
C\epsilon
\abs{u_\kappa(y)}^{2\kappa},
\\[1ex]
&&
\norm{
\pi\sb{A}\circ\big(\eubV(y,\epsilon)+\abs{u_\kappa(y)}^{2\kappa}(1+2\kappa\Pi_\bmXi)\bmupbeta\big)\circ\pi\sb{A}
}\sb{\End(\C^{2N})}
\le
C\epsilon^{2\varkappa}
\abs{u_\kappa(y)}^{2\kappa},
\\[1ex]
&&
\norm{
\pi\sb{P}\circ\big(\eubV(y,\epsilon)+\abs{u_\kappa(y)}^{2\kappa}(1+2\kappa\Pi_\bmXi)\bmupbeta\big)\circ\pi\sb{P}
}\sb{\End(\C^{2N})}
\le
C\epsilon^{2\varkappa}
\abs{u_\kappa(y)}^{2\kappa}.
\end{eqnarray*}
\end{lemma}

Above,
$u_\kappa$ is the positive radially symmetric ground state
of the nonlinear Schr\"odinger equation \eqref{dirac-existence-def-uu};
$\varkappa=\min\big(1,K/\kappa-1\big)>0$
was defined in \eqref{dirac-existence-def-varkappa}.

\begin{proof}
The bound
on $\norm{\eubV(y,\epsilon)}\sb{\End(\C^{2N})}$
follows from
\eqref{linear-b-ass-f} and \eqref{linear-b-ass-fp}:
\[
\norm{\eubV(y,\epsilon)}\sb{\End(\C^{2N})}
\le
C\epsilon^{-2}
\big(\abs{f(v^2-u^2)}+v^2\abs{f'(v^2-u^2)}
\big)
\le
C\epsilon^{-2}
v^{2\kappa}
\le
C \abs{u_\kappa(y)}^{2\kappa},
\]
where $\omega=\sqrt{m^2-\epsilon^2}$,
$\epsilon\in(0,\upepsilon_1)$,
and
\[
\abs{v(\epsilon^{-1}\abs{y},\omega)}\le C\hat V(\abs{y})\epsilon^{\frac 1 \kappa},
\qquad
\abs{u(\epsilon^{-1}\abs{y},\omega)}\le C\hat V(\abs{y})\epsilon^{1+\frac 1 \kappa},
\]
in the notations from Theorem~\ref{dirac-existence-theorem-solitary-waves-c1},
with $\hat V(\abs{y})=u_\kappa(y)$ (see \eqref{dirac-existence-Vhatdef}).

The bounds on
$\norm{
\pi\sb{P}\circ\eubV(y,\epsilon)\circ\pi\sb{A}}\sb{\End(\C^{2N})}
$
and
$
\norm{
\pi\sb{A}\circ\eubV(y,\epsilon)\circ\pi\sb{P}}\sb{\End(\C^{2N})}
$
follow from
\[
\norm{\pi\sb{P}\circ\eubV\circ\pi\sb{A}}\sb{\End(\C^{2N})}
+
\norm{\pi\sb{A}\circ\eubV\circ\pi\sb{P}}\sb{\End(\C^{2N})}
\le
C\abs{
\epsilon^{-2}f'(\phi\sb\omega\sp\ast\beta\phi\sb\omega)v u},
\]
where
$\abs{f'(\phi\sb\omega\sp\ast\beta\phi\sb\omega)}
=\abs{f'(v^2-u^2)}
\le C \abs{v}^{2\kappa-2}$
by \eqref{dirac-existence-phi-beta-phi-large} 
and \eqref{linear-b-ass-fp},
with $v$, $u$ bounded as above.

Let us prove the bound on
$\norm{
\pi\sb{A}\circ\big(\eubV(y,\epsilon)+\abs{u_\kappa(y)}^{2\kappa}(1+2\kappa\Pi_\bmXi)\bmupbeta\big)\circ\pi\sb{A}
}\sb{\End(\C^{2N})}
$.
For any numbers
$\hat V>0$ and $\hat U,\,\tilde V,\,\tilde U\in\R$
which satisfy
\begin{equation}\label{linear-b-v-u-abstract-1}
\upepsilon_1\abs{U}\le \frac V 2,
\qquad
\abs{\tilde V}\le \min\Big(\frac 1 2,C\epsilon^{2\varkappa}\Big)\hat V,
\end{equation}
with $V=\hat V+\tilde V$ and $U=\hat U+\tilde U$,
there are the following bounds:
\begin{eqnarray}
\label{linear-b-f-minus-v}
&&
\hskip -20pt
\abs{f\big(\epsilon^{\frac{2}{\kappa}}(V^2-\epsilon^2 U^2)\big)-\epsilon^2\hat V^{2\kappa}}
\nonumber
\\
&&
\hskip -20pt
\le
\abs{f\big(\epsilon^{\frac{2}{\kappa}}(V^2\!\!-\!\epsilon^2 U^2)\big)
\!-\!\epsilon^2(V^2\!\!-\!\epsilon^2 U^2)^\kappa}
+
\epsilon^2\abs{(V^2\!\!-\!\epsilon^2 U^2)^\kappa\!-\!V^{2\kappa}}
+
\epsilon^2\abs{V^{2\kappa}\!\!-\!\hat V^{2\kappa}}
\nonumber
\\
&&
\hskip -20pt
\le
c
\epsilon^{2K/\kappa}(V^2-\epsilon^2 U^2)^K
+
O(\epsilon^2 V^{2(\kappa-1)}\epsilon^2 U^2)
+
O(\epsilon^2\hat V^{2\kappa-1}\tilde V)
\nonumber
\\
&&
\hskip -20pt
\le
C\epsilon^{2+2\varkappa}\hat V^{2\kappa},
\end{eqnarray}
where we used \eqref{linear-b-v-u-abstract-1}
and also applied \eqref{linear-b-ass-f};
similarly, using \eqref{linear-b-ass-fp},
\begin{eqnarray}
\label{linear-b-fp-minus-vp}
&&
\hskip -20pt
\abs{f'\big(\epsilon^{\frac{2}{\kappa}}(V^2-\epsilon^2 U^2)\big)\epsilon^{\frac{2}{\kappa}}V^2-\kappa\epsilon^2\hat V^{2\kappa}}
\nonumber
\\
&&
\hskip -20pt
\le
\big\vert
f'\!\big(\epsilon^{\frac{2}{\kappa}}(V^2\!\!-\!\epsilon^2 U^2)\!\big)
\!-\!\kappa\big(\epsilon^{\frac{2}{\kappa}}(V^2\!\!-\!\epsilon^2 U^2)\!\big)^{\kappa-1}
\big\vert
\epsilon^{\frac{2}{\kappa}}V^2
+
\kappa\epsilon^2\abs{(V^2\!\!-\!\epsilon^2 U^2)^{\kappa-1}V^2\!\!-\!\hat V^{2\kappa}}
\nonumber
\\
&&
\hskip -20pt
\le
C\abs{\epsilon^{\frac{2}{\kappa}}(V^2-\epsilon^2 U^2)}^{K-1}\epsilon^{\frac{2}{\kappa}}V^2
+
\kappa\epsilon^2\abs{(V^2\!\!-\!\epsilon^2 U^2)^{\kappa-1}V^2\!\!-\!V^{2\kappa}}
+
\kappa\epsilon^2\abs{V^{2\kappa}\!-\!\hat V^{2\kappa}}
\nonumber
\\[1ex]
&&
\hskip -20pt
\le C\epsilon^{2+2\varkappa}\hat V^{2\kappa};
\end{eqnarray}
\begin{eqnarray}
\label{linear-b-fp-minus-nothing}
&&
\hskip -50pt
\abs{f'\!\big(\epsilon^{\frac 2 \kappa}(V^2\!\!-\!\epsilon^2 U^2)\!\big)\epsilon^{1+ \frac 2 \kappa} U V}
\le
C\abs{\epsilon^{\frac 2 \kappa}(V^2\!\!-\!\epsilon^2 U^2)}^{\kappa-1}\epsilon^{1+\frac 2 \kappa}\abs{UV}
\le
C\epsilon^3\hat V^{2\kappa}.
\end{eqnarray}
By \eqref{linear-b-estimates-on-uu},
\eqref{dirac-existence-v-u-tilde-small-better},
and \eqref{dirac-existence-phi-asymptotics},
we may assume that $\upepsilon_1>0$
in Theorem~\ref{dirac-existence-theorem-solitary-waves-c1}
is sufficiently small so that
for $\epsilon\in(0,\upepsilon_1)$
the functions
$V(t,\epsilon)$, $U(t,\epsilon)$,
$\hat V(t)$, $\tilde V(t,\epsilon)$
satisfy \eqref{linear-b-v-u-abstract-1}, pointwise in $t\in\R$.
Then, by \eqref{linear-b-f-minus-v}, one has
$
\abs{
\epsilon^{-2}f-\hat V^{2\kappa}}
\le
C\epsilon^{2\varkappa}
\hat V^{2\kappa};
$
so,
\[
\norm{
\pi\sb{A}\circ\big(\eubV+\hat V^{2\kappa}(1+2\kappa\Pi_\bmXi)\bmupbeta\big)\circ\pi\sb{A}
}\sb{\End(\C^{2N})}
\le
\norm{
\pi\sb{A}\circ\big(\eubV-\hat V^{2\kappa}\big)\circ\pi\sb{A}
}\sb{\End(\C^{2N})}
\]
\[
\le C\abs{\epsilon^{-2}f-\hat V^{2\kappa}}
+
C\abs{
\epsilon^{-2}f' \epsilon^{2+\frac{2}{\kappa}}U^2
}
\le
C\epsilon^{2\varkappa}\hat V^{2\kappa};
\]
in the first inequality, we also took into account \eqref{linear-b-p-p-p}.
The above is understood pointwise in $y\in\R^n$;
$\hat V$ and $U$ are evaluated at $t=\abs{y}$,
$f$ and $f'$ are evaluated at
$\phi\sb\omega\sp\ast\beta\phi\sb\omega
=V^2-\epsilon^2 U^2$
and are estimated with the aid of \eqref{linear-b-ass-f} and \eqref{linear-b-ass-fp}.

The bound on
$\norm{
\pi\sb{P}\circ\big(\eubV(y,\epsilon)+\abs{u_\kappa(y)}^{2\kappa}(1+2\kappa\Pi_\bmXi)\bmupbeta\big)\circ\pi\sb{P}
}\sb{\End(\C^{2N})}$
is derived similarly.
We have:
\begin{eqnarray}
\nonumber
&&
\hskip -20pt
\norm{
\pi\sb{P}\circ\big(\eubV+\hat V^{2\kappa}(1+2\kappa\Pi_\bmXi)\bmupbeta\big)\circ\pi\sb{P}
}\sb{\End(\C^{2N})}
\\
\nonumber
&&
\hskip -20pt
\le
\norm{
-\epsilon^{-2}f
-\epsilon^{-2}2(\bmupphi\sb\omega\sp\ast\pi\sb{P}\,\cdot\,)
f'
\pi\sb{P}\bmupphi\sb\omega
+\hat V^{2\kappa}(1+2\kappa\Pi_\bmXi)
}\sb{\End(\C^{2N})}
\\
\nonumber
&&
\hskip -20pt
\le
\abs{
\epsilon^{-2}\!f-\hat V^{2\kappa}}
+
\abs{
\epsilon^{-2}2f' v^2-\hat V^{2\kappa}2\kappa
}
\le
C\abs{\epsilon^{-2}\!f-\hat V^{2\kappa}}
+
C\abs{\epsilon^{-2}\!f' u^2}
\le
C\epsilon^{2\varkappa}\hat V^{2\kappa}.
\end{eqnarray}
Above, we took into account that
\[
(\bmupphi\sb\omega\sp\ast\bmupbeta\pi\sb{P}\,\cdot\,)
\pi\sb{P}\bmupphi\sb\omega
=
((\pi\sb{P}\bmupbeta\bmupphi\sb\omega)\sp\ast\,\cdot\,)
\pi\sb{P}\bmupphi\sb\omega
=
((\pi\sb{P}\bmupphi\sb\omega)\sp\ast\,\cdot\,)
\pi\sb{P}\bmupphi\sb\omega
=v^2\Pi_\bmXi,
\]
with $\Pi_\bmXi$ from \eqref{linear-b-def-Pi}
and $v=v(\abs{x},\omega)$
from \eqref{dirac-existence-sol-forms},
since
\[
\frac 1 2 (1+\beta)\phi\sb\omega(x)
=v(\abs{x},\omega)\begin{bmatrix}\xi\\0\end{bmatrix},
\qquad
\mbox{hence}
\quad
\pi\sb{P}\bmupphi\sb\omega(x)=v(\abs{x},\omega)\bmXi.
\qedhere
\]
\end{proof}

\section{Bifurcations from the origin}
\label{linear-b-sect-vb-zero}

Here we prove Theorem~\ref{linear-b-theorem-scaled-lambda}.

\begin{lemma}\label{linear-b-lemma-lambda-small}
Let
$\omega_j\in(0,m)$,
$j\in\N$;
$\omega_j\to m$.
If there are eigenvalues
$
\lambda_j\in\sigma\sb{\mathrm{p}}\big(\eubJ\eubL(\omega_j))\big),
$
\ac{we need this:}
$\Re\lambda_j\ne 0$,
$
j\in\N,
$
such that
$\lim\limits\sb{j\to\infty}\lambda_j=0$,
then the sequence
\[
\Lambda_j:=\frac{\lambda_j}{\epsilon_j^2},
\qquad
j\in\N
\]
does not have the accumulation point at infinity.
\end{lemma}

\begin{proof}
Due to the exponential decay of solitary waves
stated in Theorem~\ref{dirac-existence-theorem-solitary-waves-c1},
there is $C>0$ and $s>1/2$ such that
\begin{equation}\label{linear-b-v-to-zero-two}
\norm{ \langle r\rangle^{2s}\eubV(\cdot,\omega)}\sb{L\sp\infty(\R\sp n,\End(\C\sp{2N}))}
\le C,
\qquad
\forall\omega\in(0,m).
\end{equation}
Let $\bmPsi_j\in L\sp 2(\R\sp n,\C\sp{2N})$, $j\in\N$,
be the eigenfunctions
of $\eubJ\eubL(\omega_j)$
corresponding to $\lambda_j$;
we then have
\[
(\epsilon_j\eubD_0+\bmupbeta m-\omega_j+ \eubJ\lambda_j)\bmPsi_j
=-\epsilon_j^2\eubV(\omega_j)\bmPsi_j
\]
(see \eqref{linear-b-j-d-m-w}).
Applying to this relation
$\pi\sp\pm=(1\mp\jj\eubJ)/2$
and denoting
$\bmPsi_j\sp\pm=\pi\sp\pm\bmPsi_j$,
we arrive at the system
\begin{equation}\label{linear-b-d-omega-pm}
\begin{array}{l}
(\epsilon_j\eubD_0+\bmupbeta m-\omega_j+ \jj \lambda_j)\bmPsi_j\sp{+}
=-\epsilon_j^2\pi\sp{+} \eubV(\omega_j)\bmPsi_j,
\\[2ex]
(\epsilon_j\eubD_0+\bmupbeta m-\omega_j-\jj \lambda_j)\bmPsi_j\sp{-}
=-\epsilon_j^2\pi\sp{-} \eubV(\omega_j)\bmPsi_j.
\end{array}
\end{equation}
Due to $\omega_j\to m$,
without loss of generality,
we can assume that
$\omega_j>m/2$
for all $j\in\N$.
Since
the spectrum $\sigma(\eubJ\eubL)$ is symmetric with respect to
real and imaginary axes,
we may assume, without loss of generality,
that
$
\Im\lambda_j\ge 0
$ for all
$j\in\N$,
so that $\Re(\jj\lambda_j)\le 0$ (see Figure~\ref{linear-b-fig-spectrum-dm}).
Since $\lambda_j\to 0$,
we can also assume that
$\abs{\lambda_j}\le m/2$ for all $j\in\N$.
With
$\epsilon_j\eubD_0+\bmupbeta m-\omega_j= D\sb m-\omega_j$
(considered in $L^2(\R^n,\C\otimes_{\R}\R^{2N})$)
being self-adjoint,
one has
\begin{equation}\label{linear-b-res-b}
\norm{(\epsilon_j\eubD\sb 0+\bmupbeta m-\omega_j-\jj \lambda_j)\sp{-1}}
=
\frac{1}{\mathop{\mathrm{dist}}( \jj \lambda_j,\sigma(D\sb m-\omega_j))}
=\frac{1}{\abs{m-\omega_j-\jj \lambda_j}},
\qquad
\forall j\in\N.
\end{equation}

\begin{figure}[ht]
\begin{center}
\setlength{\unitlength}{1pt}
\begin{picture}(-100,0)(0,10)
\font\gnuplot=cmr10 at 10pt
\gnuplot
\put(-45,8){$\scriptstyle \jj \lambda\sb j$}
\put(38,-8){$\scriptstyle -\jj \lambda\sb j$}
\put(-40,3){\circle*{4}}
\put(40,-3){\circle*{4}}
\put(-2,5){$\scriptstyle 0$}
\put( 12,5){$\scriptstyle m-\omega_j$}
\put(-177,5){$\scriptstyle -m-\omega_j$}
\put(-160, 0){\line(1,0){180}}
\linethickness{2pt}
\put(20,0){\line(1,0){60}}
\put(-160,0){\line(-1,0){30}}
\linethickness{4pt}
\put(20,0){\line(1,0){1}}
\put(0,0){\line(1,0){1}}
\put(-160,0){\line(-1,0){1}}
\end{picture}
\end{center}
\caption{
\small
$\ \sigma(D\sb m-\omega_j)\ $
and $\pm \jj \lambda\sb j\ $.
}
\label{linear-b-fig-spectrum-dm}
\end{figure}


From \eqref{linear-b-d-omega-pm} and \eqref{linear-b-res-b},
using the bound \eqref{linear-b-v-to-zero-two} on $\eubV$,
we obtain
\begin{eqnarray}\label{linear-b-psi-m-bound}
\norm{\bmPsi\sb j\sp{-}}\sb{L\sp 2}
\le
\frac{\epsilon_j^2\norm{\pi\sp{-} \eubV\bmPsi\sb j}}
{\abs{m-\omega_j-\jj \lambda\sb j}}
\le
C
\frac{\epsilon_j\sp 2}{\abs{m-\omega_j-\jj \lambda\sb j}}
\norm{\langle r\rangle^{-2s}\bmPsi\sb j},
\qquad
\forall j\in\N.
\end{eqnarray}
From \eqref{linear-b-d-omega-pm} we have:
\begin{eqnarray}\label{linear-b-m-omega-lambda}
\begin{bmatrix}
m-\omega_j+ \jj \lambda\sb j&\epsilon_j \eubD_0
\\
\epsilon_j \eubD_0&-m-\omega_j+ \jj \lambda\sb j
\end{bmatrix}
\begin{bmatrix}
\pi_P^{+}\bmPsi\sb j\\\pi_A^{+}\bmPsi\sb j
\end{bmatrix}
=-
\epsilon_j^2
\begin{bmatrix}
\pi_P\sp{+} \eubV\bmPsi\sb j
\\
\pi_A\sp{+} \eubV\bmPsi\sb j
\end{bmatrix}
,
\end{eqnarray}
hence
\begin{eqnarray*}
\begin{bmatrix}
\pi_P^{+}\bmPsi\sb j\\\pi_A^{+}\bmPsi\sb j
\end{bmatrix}
=
\begin{bmatrix}
m+\omega_j-\jj \lambda\sb j&\epsilon_j \eubD_0
\\
\epsilon_j \eubD_0&-(m-\omega_j+ \jj \lambda\sb j)
\end{bmatrix}
\big(
\mu_j
+\Delta_y
\big)^{-1}
\begin{bmatrix}
\pi_P\sp{+} \eubV\bmPsi\sb j
\\
\pi_A\sp{+} \eubV\bmPsi\sb j
\end{bmatrix}
,
\end{eqnarray*}
with
$\Delta_y=-\eubD_0^2$ and
\begin{eqnarray}\label{linear-b-def-mu-j}
\mu_j
=
\frac{
(-m-\omega_j+\jj \lambda\sb j)
(m-\omega_j+\jj \lambda\sb j)
}{\epsilon_j^2},
\qquad
j\in\N;
\end{eqnarray}
\ac{we need this:}
we note that
$\Im\mu_j=-2\epsilon_j^{-2}
(\omega_j+\Im\lambda_j)\Re\lambda_j\ne 0$
for all $j\in\N$,
so that $\mu_j+\Delta_y$ is invertible indeed.
We may assume that
$\inf\sb{j\in\N}\abs{\mu_j}>0$,
or else there would be nothing to prove:
if $\mu_j\to 0$,
we would have
$\abs{\lambda_j-\jj(m-\omega_j)}=o(\epsilon_j^2)$,
hence
$\abs{\lambda_j}=O(\epsilon_j^2)$.
Then, by the limiting absorption principle
(cf. Lemma~\ref{linear-b-lemma-lap-agmon}),
\[
\norm{
\langle r\rangle^{-s}
\pi^{+}\bmPsi\sb j
}
\le
C
\abs{\mu_j}^{-1/2}
\norm{
\langle r\rangle^s
\pi\sp{+} \eubV\bmPsi\sb j
}
+
C
\epsilon_j
\norm{
\langle r\rangle^s
\pi\sp{+} \eubV\bmPsi\sb j
}.
\]
The above,
together with \eqref{linear-b-psi-m-bound}
and the bound \eqref{linear-b-v-to-zero-two} on $\eubV$,
leads to
\[
\norm{
\langle r\rangle^{-s}
\bmPsi_j}
\le
\norm{\langle r\rangle^{-s}\bmPsi_j^{-}}+
\norm{\langle r\rangle^{-s}\bmPsi_j^{+}}
\le C
\Big(
\frac{\epsilon_j^2}
{\abs{m-\omega_j-\jj \lambda\sb j}}
+
\frac{1}{\abs{\mu_j}^{\frac{1}{2}}}
+\epsilon_j
\Big)
\norm{\langle r\rangle^{-s}\bmPsi_j}.
\]
If we had
$\abs{\lambda_j}/\epsilon_j^2\to\infty$,
then
$
{\abs{m-\omega_j-\jj \lambda\sb j}}
\ge {\abs{\lambda_j}}/2
$
for $j$ large enough,
hence
\[
\abs{\mu\sb j}
\ge m
\abs{m-\omega_j-\jj \lambda\sb j}/\epsilon_j^2
\ge m
\abs{\lambda\sb j}/(2\epsilon_j^2)
\]
for $j$ large enough (since $\omega_j\to m$ and $\lambda_j\to 0$
in \eqref{linear-b-def-mu-j}),
\[
\norm{
\langle r\rangle^{-s}
\bmPsi_j}
\le C
\Big(
\frac{\epsilon_j^2}{\abs{\lambda_j}}
+
\frac{\epsilon_j}{\abs{\lambda_j}^{\frac{1}{2}}}
+\epsilon_j
\Big)
\norm{\langle r\rangle^{-s}\bmPsi_j}.
\]
Due to $\abs{\lambda_j}/\epsilon_j^2\to\infty$,
the above relation would lead to a contradiction
since $\bmPsi_j\not\equiv 0$
for all $j\in\N$.
We conclude that the sequence
$
\Lambda_j={\lambda_j}/{\epsilon_j^2}
$
can not have an accumulation point at infinity.
\end{proof}



\begin{lemma}
\label{linear-b-lemma-laplace-s}
For any $\eta\in\C\setminus\overline{\R\sb{+}}$
there is $s_0(\eta)\in(0,1)$,
lower semicontinuous in $\eta$, such that
the resolvent
$(-\Delta-\eta)^{-1}$
defines a continuous mapping
\[
(-\Delta-\eta)^{-1}:\;
L^2_s(\R^n)\to H^2_s(\R^n),
\qquad
0\le s<s_0(\eta).
\]
\end{lemma}

\begin{proof}
Let $f\in L^2_s(\R^n)$;
define $u=(-\Delta-\eta)^{-1}f\in H^2(\R^n)$.
There is the identity
\begin{eqnarray}\label{linear-b-i-dent}
(-\Delta-\eta)(\langle r \rangle^s u)
+
[\langle r \rangle^s,-\Delta]u
=\langle r \rangle^s (-\Delta-\eta)u,
\end{eqnarray}
which holds in the sense of distributions.
Taking into account that
\[
\norm{[\langle r \rangle^s,-\Delta]u}
\le C\norm{u}_{H^1}O(s)
\le C\norm{f}_{L^2_s}O(s),
\]
one concludes from \eqref{linear-b-i-dent} that
$(-\Delta-\eta)(\langle r \rangle^s u)\in L^2(\R^n)$
and hence
$\langle r \rangle^s u\in L^2(\R^n)$,
both being bounded by $C\norm{f}_{L^2_s}$,
with some $C=C(\eta)<\infty$,
thus so is $\norm{u}_{H^2_s}$.
\end{proof}

It is convenient to introduce the following operator
acting in $L^2(\R^n,\C^{2N})$:
\begin{eqnarray}\label{linear-b-def-K}
\eubK
=
\frac{1}{2m}-
\frac{\Delta}{2m}
-u_\kappa\sp{2\kappa}(1+2\kappa\Pi_\bmXi),
\qquad
\dom(\eubK)=H^2(\R^n,\C^{2N}).
\end{eqnarray}
Above, $\Pi_\bmXi\in\End(\C^{2N})$
is the orthogonal projector onto $\bmXi\in\R^{2N}$;
see \eqref{linear-b-def-Xi}, \eqref{linear-b-def-Pi}.

\begin{lemma}\label{linear-b-lemma-limit-system}
\begin{enumerate}
\item
\label{linear-b-lemma-limit-system-1}
\[
\sigma(\eubJ\eubK\at{\range(\pi\sb{P})})
=
\begin{cases}
\sigma\Big(
\begin{bmatrix}0&\eurl\sb{-}\\-\eurl\sb{+}&0\end{bmatrix}
\Big)
,
&N=2;\\[3ex]
%
\sigma
\Big(
\begin{bmatrix}0&\eurl\sb{-}\\-\eurl\sb{+}&0\end{bmatrix}
\Big)
\cup
\sigma(\jj\eurl\sb{-})
\cup
\sigma(-\jj\eurl\sb{-}),
&
N\ge 4.
\end{cases}
\]
The equality also holds for the point spectra.

\item
\label{linear-b-lemma-limit-system-2}
One has:
\begin{eqnarray}\label{linear-b-ng-ng}
\displaystyle
\dim\frakL(\eubJ\eubK\at{\range(\pi\sb{P})})
=
\begin{cases}
2n+N,\quad \kappa\ne 2/n;
\\
2n+N+2,\quad \kappa=2/n.
\end{cases}
\end{eqnarray}
\end{enumerate}
\end{lemma}

Above,
$\eubK$ is from \eqref{linear-b-def-K}
and
$\eurl\sb\pm$
were introduced in
\eqref{linear-b-def-l-small-pm}.

\begin{proof}
We decompose
$L^2(\R^n,\range(\pi\sb{P}))$
into the direct sum
$\scrX_1\oplus\scrX_2$,
where
\begin{eqnarray}\label{linear-b-def-x1-x2}
\begin{array}{l}
\scrX_1
=
L^2\big(\R^n,\Span\big\{\bmXi,\eubJ\bmXi)\big\},
\\[1ex]
\scrX_2
=
L^2\big(\R^n,\big(\Span\big\{\bmXi,\eubJ\bmXi\big\}\big)\sp\perp
\cap\range(\pi\sb{P})\big).
\end{array}
\end{eqnarray}
Note that both $\bmXi$ and $\eubJ\bmXi$
belong to $\range(\pi\sb{P})$.
The proof
of Part~\ref{linear-b-lemma-limit-system-1}
follows once we notice that
$\eubJ\eubK$ is invariant in
the spaces $\scrX_1$ and $\scrX_2$,
and that
$\eubJ\eubK\at{\scrX_1}$ is represented in
$L^2\big(\R^n,\Span\big\{\bmXi,\eubJ\bmXi\big\}\big)$
by
$
\begin{bmatrix}0&\eurl\sb{-}\\-\eurl\sb{+}&0\end{bmatrix},
$
while
$\eubJ\eubK\at{\scrX_2}$
is represented in
$L^2\big(\R^n,\big(\Span\big\{\bmXi,\eubJ\bmXi\big\}\big)\sp\perp
\cap\range(\pi\sb{P})\big)$
by
$
I_{\frac N 2-1}\otimes\sb{\C}
\begin{bmatrix}0&\eurl\sb{-}\\-\eurl\sb{-}&0\end{bmatrix}.
$
We also notice that if $N=2$, then $\scrX_2=\{0\}$.

The proof of Part~\ref{linear-b-lemma-limit-system-2}
also follows from the above decomposition
and the relations
\[
\displaystyle
\dim\frakL(\eubJ\eubK\at{\scrX_1})
=
\dim\frakL
\Big(
\begin{bmatrix}0&\eurl\sb{-}\\-\eurl\sb{+}&0\end{bmatrix}
\Big)
=
\begin{cases}
2n+2,\quad \kappa\ne 2/n;
\\
2n+4,\quad \kappa=2/n
\end{cases}
\]
(cf. Lemma~\ref{linear-b-lemma-dim-ker})
and
\[
\displaystyle
\dim\frakL(\eubJ\eubK\at{\scrX_2})
=
(N-2)
\dim\frakL(\eurl\sb{-})
=
(N-2)
\dim\ker(\eurl\sb{-})
=
N-2.
\qedhere
\]
\end{proof}

\begin{remark}
$\eubJ\eubK\at{\range(\pi\sb{A})}$
is represented in
$L^2\big(\R^n,\range(\pi\sb{A})\big)$
by
$
I_{N/2}\otimes\sb\C
\begin{bmatrix}0&\eurl\sb{-}\\-\eurl\sb{-}&0\end{bmatrix}
$.
\end{remark}

Since
$\sigma(\eubJ\eubL)$ is symmetric with respect to
real and imaginary axes,
we assume without loss of generality
that
$\lambda\sb j$
satisfies
\begin{equation}\label{linear-b-im-lambda-positive}
\Im\lambda_j\ge 0,
\qquad
\forall
j\in\N.
\end{equation}
Passing to a subsequence, we assume that
\begin{eqnarray}
\Lambda\sb j=\frac{\lambda\sb j}{\epsilon_j^2}\to \Lambda_0\in\C.
\end{eqnarray}


\begin{lemma}
\label{linear-b-lemma-bounds}
\begin{enumerate}
\item
\label{linear-b-lemma-bounds-1}
If $\Lambda_0\not\in\sigma(\eubJ\eubK)$, then
there is $C>0$ such that
\[
\norm{\pi\sb{P}\bmPsi_j}
+
\epsilon_j^{-1}\norm{\pi\sb{A}\bmPsi_j}
\le
C\epsilon_j^{2\varkappa}\norm{u_\kappa^{\kappa}\bmPsi_j},
\qquad
\forall j\in\N.
\]
\item
\label{linear-b-lemma-bounds-2}
For $s>0$ sufficiently small
there is $C>0$ such that
\[
\norm{\pi\sb{P}\sp{-}\bmPsi_j}_{H^1_s}
+
\epsilon_j^{-1}\norm{\pi\sb{A}\sp{-}\bmPsi_j}_{H^1_s}
\le
C\norm{u_\kappa^{\kappa}\bmPsi_j},
\quad
\forall j\in\N.
\]
\item
\label{linear-b-lemma-bounds-3}
If $\Lambda_0\not\in\jj[1/(2m),+\infty)$, then
\[
\norm{\pi\sb{P}\sp{+}\bmPsi_j}_{H^1_s}
+
\epsilon_j^{-1}\norm{\pi\sb{A}\sp{+}\bmPsi_j}_{H^1_s}
\le
C\norm{u_\kappa^{\kappa}\bmPsi_j},
\quad
\forall j\in\N.
\]
\item
\label{linear-b-lemma-bounds-4}
If $\Lambda_0\in\jj[1/(2m),+\infty)$, then
\[
\norm{u_\kappa^{\kappa}\pi\sb{P}\sp{+}\bmPsi_j}
+
\epsilon_j^{-1}\norm{u_\kappa^{\kappa}\pi\sb{A}\sp{+}\bmPsi_j}
\le
C\norm{u_\kappa^{\kappa}\bmPsi_j},
\quad
\forall j\in\N.
\]
\end{enumerate}
\end{lemma}

\begin{proof}
Let us prove Part~\ref{linear-b-lemma-bounds-1}.
We divide
\eqref{linear-b-4p-minus}, \eqref{linear-b-4p-plus} by $\epsilon_j^2$
and
\eqref{linear-b-4a-minus}, \eqref{linear-b-4a-plus} by $\epsilon_j$,
arriving at
\[
\begin{bmatrix}
\frac{1}{m+\omega_j}+\eubJ\Lambda_j&\eubD_0
\\
\eubD_0&-m-\omega_j+\epsilon_j\sp 2\eubJ \Lambda_j
\end{bmatrix}
\begin{bmatrix}
\pi\sb{P}\bmPsi_j
\\
\epsilon_j^{-1}\pi\sb{A}\bmPsi_j
\end{bmatrix}
=
-\begin{bmatrix}
\pi\sb{P}\eubV(y,\epsilon_j)\bmPsi_j
\\
\epsilon_j\pi\sb{A}\eubV(y,\epsilon_j)\bmPsi_j
\end{bmatrix},
\]
which we rewrite as
\begin{eqnarray}\label{linear-b-mst-jl}
\begin{bmatrix}
\frac{1}{m+\omega_j}-u_\kappa^{2\kappa}(1+2\kappa\Pi_\bmXi)+\eubJ\Lambda_j&\eubD_0
\\
\eubD_0&-m-\omega_j
\end{bmatrix}
\begin{bmatrix}
\pi\sb{P}\bmPsi_j
\\
\epsilon_j^{-1}\pi\sb{A}\bmPsi_j
\end{bmatrix}
\nonumber
\\
=
-\begin{bmatrix}
\pi\sb{P}
(\eubV(y,\epsilon_j)+u_\kappa^{2\kappa}(1+2\kappa\Pi_\bmXi))
\bmPsi_j
\\
\epsilon_j\pi\sb{A}(\eubV(y,\epsilon_j)+\eubJ\Lambda_j)\bmPsi_j
\end{bmatrix}.
\end{eqnarray}
The Schur complement of the entry $-m-\omega_j$
is given by
\[
T_j=\frac{1}{m+\omega_j}+\Lambda_j\eubJ
-u_\kappa^{2\kappa}(1+2\kappa\Pi_\bmXi)-\frac{\Delta_y}{m+\omega_j}.
\]
If $\Lambda_j\to\Lambda_0\not\in\sigma(\eubJ\eubK)$,
then $T_j$ has a bounded inverse for $j$ sufficiently large,
and then the operator in the left-hand side of \eqref{linear-b-mst-jl}
has a bounded inverse (see \eqref{linear-b-Schur-22}).
Now the proof of Part~\ref{linear-b-lemma-bounds-1}
follows from \eqref{linear-b-mst-jl}
once we take into account the bounds from Lemma~\ref{linear-b-lemma-w}.


Let us prove Part~\ref{linear-b-lemma-bounds-2}.
We apply $\pi\sp\pm$ to \eqref{linear-b-mst-jl}
and rewrite the result as
\begin{equation}\label{linear-b-mst}
\begin{bmatrix}
\frac{1}{m+\omega_j}\pm\jj \Lambda_j&\eubD_0
\\
\eubD_0&-m-\omega_j
\end{bmatrix}
\begin{bmatrix}
\pi\sb{P}\sp{\pm}\bmPsi_j
\\
\epsilon_j^{-1}\pi\sb{A}\sp{\pm}\bmPsi_j
\end{bmatrix}
=
-\begin{bmatrix}
\pi\sb{P}\sp{\pm}\eubV(y,\epsilon_j)\bmPsi_j
\\
\epsilon_j\pi\sb{A}\sp{\pm}(\eubV(y,\epsilon_j)\pm\jj \Lambda_j)\bmPsi_j
\end{bmatrix}.
\end{equation}
Denote the matrix-valued operator
in the left-hand side of \eqref{linear-b-mst} by $A_j\sp\pm$.
The Schur complement of $(A_j\sp\pm)_{22}$
is given by
\begin{eqnarray}\label{linear-b-def-s-pm}
T_j\sp\pm
=(A_j\sp\pm)_{11}-(A_j\sp\pm)_{12}(A_j\sp\pm)_{22}^{-1}(A_j\sp\pm)_{21}
=
\frac{1}{m+\omega_j}
\pm
\jj\Lambda_j
-\frac{\Delta_y}{m+\omega_j}.
\end{eqnarray}
Since $\Im\Lambda_0\ge 0$
(cf. \eqref{linear-b-im-lambda-positive}),
$T_j\sp{-}$ is invertible in $L^2$
(except perhaps at finitely many values of $j$ which we disregard);
writing the inverse of $A_j\sp{-}$ in terms of $T_j\sp{-}$,
we conclude from \eqref{linear-b-mst} that
$
\norm{\pi\sb{P}\sp{-}\bmPsi\sb j}\sb{H^1}
+
\epsilon_j^{-1}
\norm{\pi\sb{A}\sp{-}\bmPsi\sb j}\sb{H^1}
\le
C\norm{\eubV\bmPsi_j}.
$
Moreover, by Lemma~\ref{linear-b-lemma-laplace-s},
for sufficiently small $s>0$,
\[
\norm{\pi\sb{P}\sp{-}\bmPsi\sb j}\sb{H^1_s}
+
\epsilon_j^{-1}
\norm{\pi\sb{A}\sp{-}\bmPsi\sb j}\sb{H^1_s}
\le
C\norm{\eubV\bmPsi_j}\sb{L^2_s}
\le
C\norm{u_\kappa^\kappa\bmPsi_j},
\qquad
j\in\N.
\]
This proves
Part~\ref{linear-b-lemma-bounds-2}.
As long as $\Lambda_0\not\in\jj[1/(2m),+\infty)$,
Part~\ref{linear-b-lemma-bounds-3} is proved in the same way
as Part~\ref{linear-b-lemma-bounds-2}.

To prove Part~\ref{linear-b-lemma-bounds-4}, we write
\begin{eqnarray}\label{linear-b-mst-3}
\begin{bmatrix}
\frac{1}{m+\omega_j}+\jj\Lambda_j
+\mu u_\kappa^{2\kappa}
&\eubD_0
\\
\eubD_0&-m-\omega_j
\end{bmatrix}
\begin{bmatrix}
\pi\sp{+}\sb{P}\bmPsi\sb j
\\
\epsilon_j^{-1}
\pi\sp{+}\sb{A}\bmPsi\sb j
\end{bmatrix}
=
-\begin{bmatrix}
\pi\sp{+}\sb{P}(\eubV-\mu u_\kappa^{2\kappa})\bmPsi\sb j
\\
\epsilon_j
\pi\sp{+}\sb{A}
(\eubV+\jj\Lambda_j)
\bmPsi_j
\end{bmatrix},
\end{eqnarray}
with some $\mu\ge 0$ to be specified.
The Schur complement of $-m-\omega_j$ is
\[
T_j=
\frac{1}{m+\omega_j}+\jj\Lambda_j
+\mu u_\kappa^{2\kappa}
-\frac{\Delta_y}{m+\omega_j}.
\]
We pick $\mu\ge 0$ such that
the threshold $z=1/(2m)$ is a regular point of the
essential spectrum of the operator
$\frac{1}{2m}+\mu u_\kappa^{2\kappa}-\frac{\Delta}{2m}$,
which is by \cite{MR1841744} a generic situation;
indeed, by \eqref{linear-b-def-m-kappa},
the virtual levels correspond to the situation when
$M=-1+\mu \mathcal{K}$ is not invertible, with $\mathcal{K}$
a compact operator.
(For $n\ge 3$, it is enough to take $\mu=0$ since $-\Delta$ has no
virtual level at $z=0$;
for $n\le 2$, it is enough to take $\mu>0$ small \cite{MR0404846}.)
Then, by Lemma~\ref{linear-b-lemma-j-n},
$u_\kappa^\kappa \circ T_j^{-1} \circ u_\kappa^\kappa$ (for $j$ large enough)
is bounded in $L^2$,
and the conclusion follows from \eqref{linear-b-mst-3}.
Above,
$u_\kappa^\kappa\circ\ldots\circ u_\kappa^\kappa$
denotes the compositions with
the operators of multiplication by $u_\kappa^\kappa$.
\end{proof}

Lemma~\ref{linear-b-lemma-bounds}~\itref{linear-b-lemma-bounds-1}
shows that if the sequence $\Lambda_j$ converges to a point
$\Lambda_0$ which were away from
$\sigma(\eubJ\eubK)$, then at most finitely many of
$\bmPsi_j$ could be different from zero.
Therefore, there is the inclusion
\[
\Lambda_0\in\sigma(\eubJ\eubK).
\]

\ac{REMOVED:
Together with the results on the spectrum of $\eubJ\eubK$
(cf. Lemma~\ref{linear-b-lemma-limit-system}),
this proves
Theorem~\ref{linear-b-theorem-scaled-lambda}~\itref{linear-b-theorem-scaled-lambda-1}.
}

\begin{proposition}\label{linear-b-prop-limit}
If $\Lambda_0=\lim\sb{j\to\infty}\fra{\lambda_j}{\epsilon_j^2}$
and $\Re\lambda\sb j\ne 0$ for all $j\in\N$, then
\[
\Lambda_0\in\sigma\sb{\mathrm{p}}(\eubJ\eubK)\cap\R.
\]
\end{proposition}

\begin{proof}
 From now on, we assume that the corresponding eigenfunctions $\bmPsi_j$
(cf. \eqref{linear-b-j-d-m-w})
are normalized:
\begin{equation}\label{linear-b-normalization}
\norm{\bmPsi_j}^2=1,
\qquad
j\in\N.
\end{equation}
By \eqref{linear-b-mst-jl},
$\epsilon_j^{-1}\eubD_0\pi\sb{A}\bmPsi\sb j$
is bounded in $L^2$ uniformly in $j\in\N$,
while by Lemma~\ref{linear-b-lemma-bounds}~\itref{linear-b-lemma-bounds-2},
\itref{linear-b-lemma-bounds-3}
and~\itref{linear-b-lemma-bounds-4}
so is
$u_\kappa^\kappa\epsilon_j^{-1}\pi\sb{A}\bmPsi\sb j$.
Again by \eqref{linear-b-mst-jl},
$u_\kappa^\kappa\eubD_0\pi\sb{P}\bmPsi_j$ is uniformly bounded in $L^2$.
It follows that both
$\epsilon_j^{-1}\pi\sb{P}\bmPsi\sb j$
and
$\epsilon_j^{-1}\pi\sb{A}\bmPsi\sb j$
belong to $H^1\sb{\mathrm{loc}}(\R^n,\C^{2N})$
and contain weakly convergent subsequences;
we denote their limits by
\begin{eqnarray}
\hat\eubP\in H^1\sb{\mathrm{loc}}(\R^n,\C^{2N}),
\qquad
\hat\eubA\in H^1\sb{\mathrm{loc}}(\R^n,\C^{2N}).
\end{eqnarray}
Passing to the limit in \eqref{linear-b-mst-jl}
and using the bounds from Lemma~\ref{linear-b-lemma-w},
we arrive at the following system
(valid in the sense of distributions):
\begin{eqnarray}\label{linear-b-mst-jl-limit}
\begin{bmatrix}
\frac{1}{2m}-u_\kappa^{2\kappa}(1+2\kappa\Pi_\bmXi)+\eubJ\Lambda_0&\eubD_0
\\
\eubD_0&-2m
\end{bmatrix}
\begin{bmatrix}
\hat\eubP
\\
\hat\eubA
\end{bmatrix}
=
0.
\end{eqnarray}



Let us argue that
if $\Re\lambda\sb j\ne 0$ for all $j\in\N$,
then
$\begin{bmatrix}\hat\eubP\\\hat\eubA\end{bmatrix}$ is not identically zero.
By
Lemma~\ref{linear-b-lemma-bounds}~\itref{linear-b-lemma-bounds-2},
using the compactness of the Sobolev embedding
$H^1_s\subset L^2$,
we conclude that there is an infinite subsequence (which we again enumerate by $j\in\N$)
such that
\begin{equation}\label{linear-b-p-to-p}
\pi\sb{P}\sp{-}\bmPsi\sb j\to\pi\sp{-}\hat\eubP\in H^1(\R^n,\C^{2N}),
\qquad
\epsilon_j^{-1}\pi\sb{A}\sp{-}\bmPsi\sb j\to\pi\sp{-}\hat\eubA\in H^1(\R^n,\C^{2N}),
\qquad
j\to\infty,
\end{equation}
with the strong convergence in $L^2$.

\begin{remark}\label{linear-b-remark-plus}
If additionally $\Lambda_0\not\in\jj[1/(2m),+\infty)$,
then $T\sp{+}_j$
from \eqref{linear-b-def-s-pm}
is also invertible; just like above, one concludes that
there is an infinite subsequence (which we again enumerate by $j\in\N$)
such that
\begin{eqnarray}\label{linear-b-p-to-p-plus}
\hskip -15pt
\pi\sb{P}\sp{+}\bmPsi\sb j\to\pi\sp{+}\hat\eubP\in H^1(\R^n,\C^{2N}),
\qquad
\epsilon_j^{-1}\pi\sb{A}\sp{+}\bmPsi\sb j\to\pi\sp{+}\hat\eubA\in H^1(\R^n,\C^{2N})
\end{eqnarray}
as $j\to\infty$,
with the strong convergence in $L^2$.
\end{remark}

\begin{lemma}
\label{linear-b-lemma-php2}
Let $J$ be skew-symmetric
and let $L$ be symmetric linear operators in a Hilbert space $\mathscr{X}$,
with $J^2=-I_{\mathscr{X}}$.
If $\lambda\in\sigma\sb{\mathrm{p}}(JL)\setminus \jj \R$
and $\bmPsi\in\mathscr{X}$ is a corresponding eigenvector,
then
$
\langle\bmPsi,L\bmPsi\rangle=0
$
and
$
\langle\bmPsi,J\bmPsi\rangle=0.
$
\end{lemma}

\begin{proof}
If $\Re\lambda\ne 0$,
then both sides of the identity
$
\langle\bmPsi,L\bmPsi\rangle
=-\lambda\langle\bmPsi,J\bmPsi\rangle
$
equal zero
since $\langle\bmPsi,L\bmPsi\rangle\in\R$,
$\langle\bmPsi,J\bmPsi\rangle\in \jj \R$.
\end{proof}

\begin{lemma}\label{linear-b-lemma-nonzero}
If $\Re\lambda\sb j\ne 0$ for all $j\in\N$,
then $\hat\eubP\sp{-}\ne 0$.
\end{lemma}

\begin{proof}
Lemma~\ref{linear-b-lemma-php2}
yields
$
0=\langle\bmPsi\sb j,\eubJ\bmPsi\sb j\rangle
=\jj \norm{\bmPsi\sb j\sp{+}}\sp 2
-\jj \norm{\bmPsi\sb j\sp{-}}\sp 2,
$
for all
$j\in\N$;
thus, by \eqref{linear-b-normalization},
\begin{equation}\label{linear-b-half}
\norm{\bmPsi\sb j\sp{+}}\sp 2=\norm{\bmPsi\sb j\sp{-}}\sp 2
=\norm{\bmPsi\sb j}\sp 2/2=1/2,
\qquad
\forall j\in\N.
\end{equation}
Therefore,\begin{equation}\label{linear-b-root-two}
\norm{\hat\eubP\sp{-}}^2
=
\lim\sb{j\to\infty}
\norm{\pi\sb{P}\sp{-}\bmPsi\sb j}^2
=
\lim\sb{j\to\infty}
\big(
\norm{\pi\sb{P}\sp{-}\bmPsi\sb j}^2
+
\norm{\pi\sb{A}\sp{-}\bmPsi\sb j}^2
\big)
=1/2,
\qquad
\forall j\in\N.
\end{equation}
Above, in the first two relations,
we took into account \eqref{linear-b-p-to-p}.
\end{proof}

Thus,
$\begin{bmatrix}\hat\eubP\\\hat\eubA\end{bmatrix}\in
L^2(\R^n,\C^{2N}\times \C^{2N})$ is not identically zero,
hence $\Lambda_0\in\sigma\sb{\mathrm{p}}(\eubJ\eubK)$.
It remains to prove that
\begin{eqnarray}\label{linear-b-lambda-0-il}
\Lambda\sb 0
\in\R.
\end{eqnarray}
Let us assume that, on the contrary,
$\Lambda\sb 0\in\sigma\sb{\mathrm{p}}(\eubJ\eubK)
\cap(\jj\R\setminus\{0\})$.
By \eqref{linear-b-im-lambda-positive},
it is enough to consider
\begin{equation}\label{linear-b-im-lambda-positive-1}
\Lambda\sb 0=\jj a,
\qquad
a>0.
\end{equation}
By \eqref{linear-b-root-two},
$
\norm{\hat\eubP\sp{-}}^2
=1/2$.
Since
\[
\norm{\hat\eubP\sp{+}}^2
\le
\lim\sb{j\to\infty}
\norm{\pi\sb{P}\sp{+}\bmPsi\sb j}^2
\le
\lim\sb{j\to\infty}
\big(\norm{\pi\sb{P}\sp{+}\bmPsi\sb j}^2+\norm{\pi\sb{A}\sp{+}\bmPsi\sb j}^2\big)
=1/2,
\]
we arrive at the inequality
\begin{equation}\label{linear-b-pm-zero}
\norm{\hat\eubP\sp{+}}\sp 2-\norm{\hat\eubP\sp{-}}\sp 2\le 0.
\end{equation}
 From the above and from
$\eubJ\eubK\hat\eubP=\jj a\hat\eubP$
it follows that
\begin{eqnarray}\label{linear-b-z-l-z-negative}
\langle\hat\eubP,\eubK\hat\eubP\rangle
=
\langle\hat\eubP,-\jj a\eubJ\hat\eubP\rangle
=a\langle\hat\eubP\sp{+},\hat\eubP\sp{+}\rangle
-a\langle\hat\eubP\sp{-},\hat\eubP\sp{-}\rangle
\le 0.
\end{eqnarray}

\begin{remark}
If $\Lambda\sb 0$ belongs to the spectral gap
of $\eubJ\eubK$
($\Lambda_0\in\jj\R$, $\abs{\Lambda_0}<1/(2m)$),
then
both $\pi\sb{P}\sp{\pm}\bmPsi_j$ and $\epsilon_j^{-1}\pi\sb{A}\sp{\pm}\bmPsi_j$
(up to choosing a subsequence)
converge to
$\pi\sp{\pm}\hat\eubP\in H^1(\R^n,\C^{2N})$
and
$\pi\sp{\pm}\hat\eubA\in H^1(\R^n,\C^{2N})$
strongly in $L^2$ (cf. Remark~\ref{linear-b-remark-plus}).
Then, by the above arguments,
$\norm{\hat\eubP\sp{\pm}}^2=1/2$
and hence
$\langle\hat\eubP,\eubJ\hat\eubP\rangle
=0=\langle\hat\eubP,\eubK\hat\eubP\rangle$.
\end{remark}

\begin{lemma}\label{linear-b-lemma-zlz}
If
$\Lambda\sb 0\in\jj\R\setminus\{0\}$,
$\Lambda\sb 0\in\sigma\sb{\mathrm{p}}
\Big(
\begin{bmatrix}0&\eurl\sb{-}\\-\eurl\sb{+}&0\end{bmatrix}\Big)$,
and $\eubz\in L^2(\R^n,\C^2)$ is a corresponding eigenfunction,
then
\[
\Big\langle
\eubz,\begin{bmatrix}\eurl\sb{+}&0\\0&\eurl\sb{-}\end{bmatrix}\eubz
\Big\rangle>0.
\]
\end{lemma}

\begin{proof}
Let $\eubz$ be an eigenfunction
which corresponds to
$\Lambda\sb 0
\in\sigma\sb{\mathrm{p}}\Big(
\begin{bmatrix}0&\eurl\sb{-}\\-\eurl\sb{+}&0\end{bmatrix}
\Big)\cap\jj\R$,
$\Lambda\sb 0\ne 0$.
Let $p,\,q\in L^2(\R^n,\C)$
be such that
$
\eubz=\begin{bmatrix}p\\ \jj q\end{bmatrix}$
and let
$\Lambda\sb 0= \jj a$ with $a\in\R\setminus\{0\}$.
Then
$
 \jj a
\begin{bmatrix}p\\ \jj q\end{bmatrix}
=
\begin{bmatrix}0&\eurl\sb{-}\\-\eurl\sb{+}&0\end{bmatrix}
\begin{bmatrix}p\\ \jj q\end{bmatrix}
$
results in
$
a p=\eurl\sb{-}q
$
and
$
a q=\eurl\sb{+}p
$
(note that $q\not\in\ker(\eurl\sb{-})$;
otherwise one would conclude
that $p\equiv 0$ and then also $q\equiv 0$,
so that $\eubz\equiv 0$,
hence not an eigenvector).
These relations lead to
\[
\langle p,\eurl\sb{+}p\rangle
=
a\langle p,q\rangle
=
a\overline{\langle q,p\rangle}
=
\overline{\langle q,a p\rangle}
=
\overline{\langle q,\eurl\sb{-}q\rangle}
=
\langle q,\eurl\sb{-}q\rangle,
\]
hence
\begin{equation*}
\Big\langle
\begin{bmatrix}p\\ \jj q\end{bmatrix},
\begin{bmatrix}\eurl\sb{+}&0\\0&\eurl\sb{-}\end{bmatrix}
\begin{bmatrix}p\\ \jj q\end{bmatrix}
\Big\rangle
=
\langle p,\eurl\sb{+}p\rangle
+
\langle q,\eurl\sb{-}q\rangle
=2\langle q,\eurl\sb{-}q\rangle>0,
\end{equation*}
where we took into account that $\eurl\sb{-}$
is semi-positive-definite
and that $q\not\in\ker(\eurl\sb{-})$.
\end{proof}

Since $\eubK$ is invariant in the subspaces
$\scrX_1$ and $\scrX_2$
defined in \eqref{linear-b-def-x1-x2},
where it is represented by
$\begin{bmatrix}\eurl\sb{+}&0\\0&\eurl\sb{-}\end{bmatrix}$
and by
a positive-definite operator
$I_{N/2-1}\otimes \sb{\C}
\begin{bmatrix}\eurl\sb{-}&0\\0&\eurl\sb{-}\end{bmatrix}$,
respectively
(see the proof of Lemma~\ref{linear-b-lemma-limit-system}),
it follows from Lemma~\ref{linear-b-lemma-zlz}
that the quadratic form
\[
\langle\,\cdot\,,\eubK\,\cdot\,\rangle
=
\langle\,\cdot\,,\eubK\,\cdot\,\rangle\at{\scrX_1}
+
\langle\,\cdot\,,\eubK\,\cdot\,\rangle\at{\scrX_2}
=
\langle\,\cdot\,,\eubK\,\cdot\,\rangle\at{\scrX_1}
+
\langle\,\cdot\,,
(I_{N-2}\otimes\eurl\sb{-})\,\cdot\,\rangle\at{\scrX_2}
\]
is strictly positive-definite on any eigenspace of $\eubJ\eubK$
corresponding to
$\Lambda_0=\jj a\in\sigma\sb{\mathrm{p}}(\eubJ\eubK)$,
$a>0$.
Therefore,
\begin{eqnarray}\label{linear-b-z-l-z-positive}
\langle\hat\eubP,\eubK\hat\eubP\rangle>0.
\end{eqnarray}
The relations \eqref{linear-b-z-l-z-negative}
and \eqref{linear-b-z-l-z-positive}
lead to a contradiction;
we conclude that \eqref{linear-b-lambda-0-il} is satisfied.
\end{proof}

Proposition~\ref{linear-b-prop-limit}
concludes the proof of
Theorem~\ref{linear-b-theorem-scaled-lambda}~\itref{linear-b-theorem-scaled-lambda-two}.

\medskip

\noindent
{\it The case $\Lambda\sb 0=0$.}
Now we turn to
Theorem~\ref{linear-b-theorem-scaled-lambda}~\itref{linear-b-theorem-scaled-lambda-three},
which treats the case $\Lambda\sb 0=0$.
Let us find the dimension of the spectral subspace
of $\eubJ\eubL(\omega)$
corresponding to all eigenvalues
which satisfy $\abs{\lambda}=o(\epsilon^2)$.

\begin{proposition}\label{linear-b-prop-jl-limit}
There is $\delta>0$ sufficiently small
and $\upepsilon_2>0$
such that
$
\p\mathbb{D}\sb{\delta\epsilon^2}\subset\rho(\eubJ\eubL)
$
for all $\epsilon\in(0,\upepsilon_2)$,
and for the Riesz projector
\begin{equation}\label{linear-b-def-p-delta-0}
P\sb{\delta,\epsilon}
=
-\frac{1}{2\pi\jj}
\oint\sb{\abs{\eta}=\delta\epsilon^2}
\big(\eubJ\eubL(\omega)-\eta\big)^{-1}
\,d\eta,
\qquad
\omega=\sqrt{m^2-\epsilon^2}
\end{equation}
one has
$
\rank P\sb{\delta,\epsilon}=2n+N
$
if $\kappa\ne 2/n$,
and $2n+N+2$ otherwise.
One also has
$
\dim\ker(\eubJ\eubL(\omega))
=
n+N-1.
$
\end{proposition}

\begin{remark}\label{linear-b-remark-informal}
Let us first give an informal calculation
of $\rank P\sb{\delta,\epsilon}$,
which is the dimension of the generalized null space of $\eubJ\eubL$.
By Lemma~\ref{linear-b-lemma-dim-ker-nld},
due to the unitary and translational invariance,
the null space is of dimension (at least)
$n+1$,
and there is (at least) a $2\times 2$ Jordan block
corresponding to each of these null vectors,
resulting in
$
\dim\frakL(\eubJ\eubL(\omega))\ge 2n+2.
$
Moreover,
the ground states
of the nonlinear Dirac equation
from Theorem~\ref{dirac-existence-theorem-solitary-waves-c1}
have additional degeneracy due to the choice
of the direction $\bm{\xi}\in\C\sp{N/2}$,
$\abs{\bm{\xi}}=1$
(cf. \eqref{dirac-existence-sol-forms}).
The tangent space
to the sphere on which $\bm{\xi}$ lives
is of complex dimension $N/2-1$.
(Let us point out that the real dimension
is $N-2$, as it should be;
we did not expect to have
the real dimension $N-1$
since we have already factored out the action of the unitary group.)
Thus,
\begin{eqnarray}\label{linear-b-informal}
\dim\frakL(\eubJ\eubL(\omega))
\ge
2(n+1)+2\big(N/2-1\big)
=
2n+N,
\qquad
\omega\lessapprox m.
\end{eqnarray}
Whether this is a strict inequality,
depends on the Kolokolov condition
$\p\sb\omega Q(\phi\sb\omega)=0$
which indicates the jump by $2$ in size of the Jordan block
corresponding to the unitary symmetry,
and on the energy vanishing $E(\phi\sb\omega)=0$,
which indicates jumps in size of Jordan blocks
corresponding to the translation symmetry \cite{MR3311594}.
\end{remark}

\begin{proof}[Proof of Proposition~\ref{linear-b-prop-jl-limit}]
Let $\delta>0$ be such that
\[
\overline{\mathbb{D}}\sb{\delta}\cap\sigma\Big(
\begin{bmatrix}0&\eurl\sb{-}\\-\eurl\sb{+}&0\end{bmatrix}
\Big)
=\{0\}.
\]
Let us define the operator
\begin{equation}\label{linear-b-def-l-script}
\mathscr{L}(\omega)
=\epsilon^{-2}\eubL(\omega)
=\epsilon^{-1}\eubD_0
+\epsilon^{-2}(\bmupbeta m-\omega)
+\eubV(y,\omega),
\qquad
\omega\in(\upomega_1,m)
\end{equation}
(cf. \eqref{linear-b-def-big-l}),
where
$y=\epsilon x$,
$\epsilon=\sqrt{m^2-\omega^2}$,
and
$\eubD_0$
is the Dirac operator in the variables $y=\epsilon x$
(we recall that
$\epsilon\eubD_0=\epsilon\eubJ\bmupalpha\cdot\nabla_y
=\eubJ\bmupalpha\cdot\nabla_x$).
We rewrite \eqref{linear-b-def-p-delta-0}
as follows:
\[
P\sb{\delta,\epsilon}
=
-\frac{1}{2\pi\jj}
\oint\sb{\abs{\eta}=\delta}
(\eubJ\mathscr{L}(\omega)-\eta)^{-1}
\,d\eta,
\qquad
\omega=\sqrt{m^2-\epsilon^2}.
\]

\begin{lemma}\label{linear-b-lemma-same-rank}
Let
\[
p\sb{\delta}
=
-\frac{1}{2\pi\jj}
\oint\sb{\abs{\eta}=\delta}
(\eubJ\eubK-\eta)^{-1}\pi\sb{P}\,d\eta
\]
be the Riesz projector onto the generalized null space
of $\eubJ\eubK\at{\range(\pi\sb{P})}$.
Then:
\begin{enumerate}
\item
\label{linear-b-lemma-same-rank-i}
\[
\Norm{
\begin{bmatrix}
\pi\sb{P}P\sb{\delta,\epsilon}\pi\sb{P}
&
\pi\sb{P}P\sb{\delta,\epsilon}\pi\sb{A}
\\
\pi\sb{A}P\sb{\delta,\epsilon}\pi\sb{P}
&
\pi\sb{A}P\sb{\delta,\epsilon}\pi\sb{A}
\end{bmatrix}
-
\begin{bmatrix}p\sb{\delta}&0\\0&0\end{bmatrix}
}\sb{L^2(\R^n,\C^{4N})\to L^2(\R^n,\C^{4N})}\to 0
\quad
\mbox{
as
\ $\epsilon\to 0$};
\]
\item
\label{linear-b-lemma-same-rank-ii}
There is $\upepsilon_2>0$ such that,
for any $\epsilon\in(0,\upepsilon_2)$,
one has
$\rank P\sb{\delta,\epsilon}=\rank p\sb{\delta}$.
\end{enumerate}
\end{lemma}

\begin{proof}
By Lemma~\ref{linear-b-lemma-limit-system},
$\sigma(\eubJ\eubK)\subset\sigma
\Big(
\begin{bmatrix}0&\eurl\sb{-}\\-\eurl\sb{+}&0\end{bmatrix}
\Big)
\cup\sigma(\jj\eurl\sb{-})\cup\sigma(-\jj\eurl\sb{-})$,
hence $(\eubJ\eubK-\eta)\at{\range(\pi\sb{P})}$ has a bounded inverse
\[
(\eubJ\eubK-\eta)^{-1}:\;
H^{-1}(\R^n,\range(\pi\sb{P}))\to H^1(\R^n,\range(\pi\sb{P}))
\]
on the circle $\abs{\eta}=\delta$, $\eta\in\C$,
with $\delta>0$ sufficiently small
(cf. Lemma~\ref{linear-b-lemma-lm-inverse-h1}).

On the direct sum
$(\range(\pi\sb{P}))\oplus(\range(\pi\sb{A}))$,
the operator $\eubJ\mathscr{L}(\omega)-\eta$
is represented by the matrix
\[
\begin{bmatrix}
A_{11}(\epsilon,\eta)&A_{12}(\epsilon)
\\
A_{21}(\epsilon)&A_{22}(\epsilon,\eta)
\end{bmatrix}
:=
\begin{bmatrix}
\pi\sb{P}\eubJ\mathscr{L}\pi\sb{P}-\eta
&\pi\sb{P}\eubJ\mathscr{L}\pi\sb{A}
\\
\pi\sb{A}\eubJ\mathscr{L}\pi\sb{P}
&\pi\sb{A}\eubJ\mathscr{L}\pi\sb{A}-\eta
\end{bmatrix}.
\]
According to
\eqref{linear-b-def-l-script},
\begin{eqnarray}\label{linear-b-b-c-d-small}
&
\norm{A_{12}(\epsilon)}\sb{H^1\to L^2}
+\norm{A_{12}(\epsilon)}\sb{L^2\to H^{-1}}
+\norm{A_{21}(\epsilon)}\sb{H^1\to L^2}=O(\epsilon^{-1}),
\nonumber
\\
&
A_{22}(\epsilon,\eta)^{-1}\at{\range(\pi\sb{A})}
=
-\frac{\epsilon^2}{2m}\eubJ^{-1}+O\sb{L^2\to L^2}(\epsilon^4).
\end{eqnarray}
In the last equality, we used the following relation (cf. \eqref{linear-b-def-l-script}):
\[
A_{22}(\epsilon,\eta)
=
\pi\sb{A}\eubJ\mathscr{L}\pi\sb{A}-\eta
=
-\epsilon^{-2}(m+\omega)\eubJ
+
\pi\sb{A}\eubJ\eubV(y,\omega)\pi\sb{A}-\eta.
\]
The Schur complement of $A_{22}(\epsilon,\eta)$
(see \eqref{linear-b-Schur-22})
is given by
\begin{eqnarray}\label{linear-b-def-Schur-0}
&&T(\epsilon,\eta)
=
A_{11}(\epsilon,\eta)-A_{12}(\epsilon)A_{22}(\epsilon,\eta)^{-1}A_{21}(\epsilon)
\\
\nonumber
&&=
\pi\sb{P}
\Big(\frac{\eubJ}{m+\omega}+\eubJ\eubV-\eta\Big)
\pi\sb{P}
-
\pi\sb{P}(\epsilon^{-1}\eubJ\eubD_0+\eubJ\eubV)
\pi\sb{A}
A_{22}(\epsilon,\eta)^{-1}
\pi\sb{A}
(\epsilon^{-1}\eubJ\eubD_0+\eubJ\eubV)
\pi\sb{P},
\end{eqnarray}
which we consider as an operator
$
T(\epsilon,\eta):\;
H^1(\R^n,\C^{2N})\to H^{-1}(\R^n,\C^{2N})
$.
With the expression \eqref{linear-b-b-c-d-small}
for $A_{22}(\epsilon,\eta)^{-1}$,
the Schur complement \eqref{linear-b-def-Schur-0} takes the form
\begin{equation}\label{linear-b-def-Schur}
T(\epsilon,\eta)
=
\pi\sb{P}
\Big(
\frac{\eubJ}{m+\omega}+\eubJ\eubV-\eta
-\frac{\eubJ\Delta_y}{2m}
+O\sb{H^1\to H^{-1}}(\epsilon^2)
\Big)\pi\sb{P}.
\end{equation}
Using the expression \eqref{linear-b-def-Schur},
we can write
the inverse of $\eubJ\mathscr{L}(\omega)-\eta$,
considered as a map
\[
(\eubJ\mathscr{L}(\omega)-\eta)^{-1}:\;
L^2(\R^n,\range(\pi\sb{P})\oplus\range(\pi\sb{A}))
\to
L^2(\R^n,\range(\pi\sb{P})\oplus\range(\pi\sb{A}))
,
\]
as follows (see \eqref{linear-b-Schur-22}):
\begin{equation}\label{linear-b-m-z-inverse}
(\eubJ\mathscr{L}-\eta)^{-1}
=
\begin{bmatrix}
T^{-1}&T^{-1}A_{12} A_{22}^{-1}
\\
-A_{22}^{-1}A_{21}T^{-1}
&A_{22}^{-1}+A_{22}^{-1}A_{21}
T^{-1}A_{12}A_{22}^{-1}
\end{bmatrix}
.
\end{equation}
Above, $T=T(\epsilon,\eta)$,
$A_{12}=A_{12}(\epsilon)$,
$A_{21}=A_{21}(\epsilon)$,
$A_{22}=A_{22}(\epsilon,\eta)$.
Since
\begin{equation}\label{linear-b-s-m-z}
\norm{
\big(T(\epsilon,\eta)-(\eubJ\eubK-\eta)\big)\at{\range(\pi\sb{P})}}\sb{H^1\to H^{-1}}=O(\epsilon),
\end{equation}
uniformly in $\abs{\eta}=\delta$,
while
$\eubJ\eubK-\eta:\, H^{1}(\R^n,\C^{2N})\to H^{-1}(\R^n,\C^{2N})$
has a bounded inverse
for $\abs{\eta}=\delta$,
the operator $T(\epsilon,\eta)\at{\range(\pi\sb{P})}$ is also invertible
for $\abs{\eta}=\delta$
as long as $\epsilon>0$ is sufficiently small,
with its inverse being a bounded map from
$H^{-1}(\R^n,\range(\pi\sb{P}))$ to $H^1(\R^n,\range(\pi\sb{P}))$.
Using \eqref{linear-b-b-c-d-small}, we conclude that
the matrix \eqref{linear-b-m-z-inverse}
has all its entries, except the top left one,
of order $O(\epsilon)$
(when considered in the $L^2\to L^2$ operator norm).
Hence, it follows from \eqref{linear-b-m-z-inverse}
and \eqref{linear-b-s-m-z}
that,
considering $P\sb{\delta,\epsilon}$
as an operator on $\range(\pi\sb{P})\oplus\range(\pi\sb{A})$,
\[
\Norm{
P\sb{\delta,\epsilon}
-
\begin{bmatrix}p\sb\delta&0\\0&0\end{bmatrix}
}
=
\Norm{
\frac{1}{2\pi\jj}\oint\sb{\abs{\eta}=\delta}
\begin{bmatrix}T(\epsilon,\eta)^{-1}-(\eubJ\eubK-\eta)^{-1}&0\\0&0\end{bmatrix}
\,d\eta
}
+O(\epsilon)
=O(\epsilon),
\]
where the norms refer to $L^2\to L^2$ operator norm.
This proves
Lemma~\ref{linear-b-lemma-same-rank}~\itref{linear-b-lemma-same-rank-i}.
The statement~\itref{linear-b-lemma-same-rank-ii} follows
since both $P\sb{\delta,\epsilon}$ and $p\sb{\delta}$
are projectors.
\end{proof}

The statement
of Proposition~\ref{linear-b-prop-jl-limit}
on the rank of $P\sb{\delta,\epsilon}$
follows from Lemma~\ref{linear-b-lemma-same-rank}
and Lemma~\ref{linear-b-lemma-limit-system}~\itref{linear-b-lemma-limit-system-2}.
The dimension of the kernel of $\eubJ\eubL(\omega)$
follows from
considering the rank of the projection
onto the neighborhood of the eigenvalue $\lambda=0$
of the self-adjoint operator $\mathscr{L}$:
\[
\hat P\sb{\delta,\epsilon}
=-\frac{1}{2\pi\jj}\oint\sb{\abs{\eta}=\delta}
(\mathscr{L}(\omega)-\eta)^{-1}\,d\eta,
\qquad
\omega=\sqrt{m^2-\epsilon^2},
\]
similarly to how it was done for $P\sb{\delta,\epsilon}$,
and from the relation
\[
\ker(\eubJ\mathscr{L}(\omega))
=
\ker(\mathscr{L}(\omega))
=\range(\hat P\sb{\delta,\epsilon}),
\qquad
\epsilon\in(0,\upepsilon_2).
\]
Above, $\delta>0$ is small enough so that
\[
\overline{\mathbb{D}}\sb\delta
\cap\sigma\Big(
\begin{bmatrix}\eurl\sb{+}&0\\0&\eurl\sb{-}\end{bmatrix}
\Big)=\{0\}.
\]
This finishes the proof of Proposition~\ref{linear-b-prop-jl-limit}.
\end{proof}

Now we return to the proof of
Theorem~\ref{linear-b-theorem-scaled-lambda}~\itref{linear-b-theorem-scaled-lambda-three}.
If there is an eigenvalue family
$(\lambda\sb j)\sb{j\in\N}$,
$\lambda\sb j\in\sigma\sb{\mathrm{p}}(\eubJ\eubL(\omega_j))$,
such that
$\lambda_j\ne 0$ for all $j\in\N$,
$\Lambda_j=\frac{\lambda\sb j}{m\sp 2-\omega_j\sp 2}\to 0$
as $\omega_j\to m$,
then the dimension of the generalized kernel of the
nonrelativistic limit of the rescaled system
jumps up,
so that
$
\dim\frakL\big(\eubJ\eubL(\omega)\big)\at{\omega<m}+1
\ge 2n+N+1,
$
or, taking into account the symmetry of $\sigma(\eubJ\eubL(\omega))$
with respect to reflections relative to the axes $\R$ and $\jj \R$,
we see that
there is at least one more eigenvalue family,
hence the dimension
of the generalized kernel of the nonrelativistic limit
jumps up by at least two:
\[
\dim\frakL\big(\eubJ\eubL(\omega)\big)\at{\omega<m}+2
\ge 2n+N+2.
\]
Comparing this inequality to
Lemma~\ref{linear-b-lemma-limit-system}~\itref{linear-b-lemma-limit-system-2}
shows that
the assumption $\Lambda_j\ne 0$ for $j\in\N$,
$\Lambda_j\to 0$
leads to
$
\dim\frakL\Big(
\begin{bmatrix}0&\eurl\sb{-}\\-\eurl\sb{+}&0\end{bmatrix}
\Big)
\ge 2n+4.
$
By Lemma~\ref{linear-b-lemma-dim-ker}
this is only possible in the charge-critical case $\kappa=2/n$.


Thus, we know that $\kappa=2/n$.
The remaining part of the argument
further develops the approach from \cite{MR1995870}
to show that there could be no subsequence
$\Lambda_j\to 0$
with $\Re\Lambda_j\ne 0$
in the case when
$\p\sb\omega Q(\phi\sb\omega)<0$
for $\omega\lessapprox m$,
in a formal agreement with the Kolokolov
stability condition \cite{kolokolov-1973}.
We define
\begin{eqnarray}\label{linear-b-def-e1-e2}
\bm\Phi(y,\omega)
&=&
\epsilon^{-\frac 1 \kappa}\bmupphi\sb\omega(\epsilon^{-1}y),
\nonumber
\\
\e_1(y,\omega)
&=&
\epsilon^{-\frac 1 \kappa}\eubJ\bmupphi\sb\omega(\epsilon^{-1}y),
\\
\nonumber
\e_2(y,\omega)
&=&
\epsilon^{2-\frac 1 \kappa}(\p\sb\omega\bmupphi\sb\omega)(\epsilon^{-1}y);
\end{eqnarray}
here and below, $\epsilon=\sqrt{m^2-\omega^2}$.
Noting the factor $\epsilon^{-2}$
in the definition of $\mathscr{L}$ in \eqref{linear-b-def-l-script},
we deduce from
\eqref{linear-b-l-phi-phi}
the relations
\begin{equation}\label{linear-b-e1-e2}
\eubJ\mathscr{L}(\omega)
\e_1(\omega)=0,
\qquad
\eubJ\mathscr{L}(\omega)
\e_2(\omega)=\e_1(\omega),
\qquad
\omega\in(\upomega_1,m).
\end{equation}
With
\begin{eqnarray}\label{linear-b-def-theta-0}
\theta(y)=-\frac{m}{\kappa}u_\kappa(y)-m\,y\!\cdot\!\nabla u_\kappa(y)
\qquad
\theta\in H^1(\R^n)
\end{eqnarray}
and real-valued $\alpha,\,\beta\in H^2(\R^n)$
such that
\begin{eqnarray}\label{linear-b-theta-alpha-beta}
\eurl\sb{+}\theta(y)=u_\kappa(y),
\qquad
\eurl\sb{-}\alpha(y)=\theta(y),
\qquad
\eurl\sb{+}\beta(y)=\alpha(y)
\end{eqnarray}
(see
\eqref{linear-b-def-theta}, \eqref{linear-b-def-alpha}, and \eqref{linear-b-def-beta}
in the proof of Lemma~\ref{linear-b-lemma-dim-ker}),
we define
\[
E_3(y)=-\eubJ\bmXi
\alpha(y),
\qquad
E_4(y)=
-\bmXi
\beta(y),
\]
with $\bmXi\in\R^{2N}$ from
\eqref{linear-b-def-Xi},
so that $E_3,\,E_4\in H^2(\R^n,\R^{2N})$
satisfy
\begin{equation}\label{linear-b-E3-E4}
\eubJ\eubK E_3(y)
=
\bmXi
\theta(y),
\qquad
\eubJ\eubK E_4(y)=E_3(y).
\end{equation}

\begin{lemma}\label{linear-b-lemma-e1-e2}
Let $\upomega_2=\sqrt{m^2-\upepsilon_2^2}$,
with $\upepsilon_2$ from Proposition~\ref{linear-b-prop-jl-limit}.
The functions
\[
\e_a(\omega),
\ \ 1\le a\le 2,
\qquad
\e_4(\omega)=P\sb{\delta,\epsilon}E_4,
\qquad
\e_3(\omega)=\eubJ\eubL(\omega)\e_4(\omega),
\qquad
\omega\in(\upomega_2,m),
\]
can be extended to continuous maps
$\e_a:\,(\upomega_2,m]\to L^2(\R^n,\R^{2N})$,
$1\le a\le 4$,
with
$\e_1(m)=
\eubJ\bmXi
u_\kappa$,
$\e_2(m)=
\bmXi
\theta$,
and $\e_a(m)=\lim\sb{\omega\to m}\e_a(\omega)=E_a$,
$3\le a\le 4$,
so that
\begin{eqnarray}\label{linear-b-jordan}
\eubJ\eubK\e_1(\omega)=0,
\quad
\eubJ\eubK\e_2(\omega)=\e_1(\omega),
\quad
\omega\in(\upomega_2,m];
\nonumber
\\[2ex]
\eubJ\eubK\e_3(m)=\e_2(m),
\quad
\eubJ\eubK\e_4(m)=\e_3(m).
\end{eqnarray}
\end{lemma}

\begin{proof}
By Theorem~\ref{dirac-existence-theorem-solitary-waves-c1},
\[
\e_1(y,\omega)
=
\epsilon^{-\frac 1 \kappa}\eubJ\bmupphi\sb\omega(\epsilon^{-1} y)
=\eubJ\bmXi
u_\kappa(y)
+O\sb{H^1(\R^n,\C^{2N})}(\epsilon^{2\varkappa}),
\]
so
$
\lim\limits\sb{\omega\to m}
\e_1(y,\omega)
$
is defined in $H^1(\R^n,\C^{2N})$.
Since
\[
v(r,\omega)=\epsilon^{1/\kappa}
(\hat V(\epsilon r)+\tilde V(\epsilon r,\epsilon))
\quad
\mbox{and}
\quad
u(r,\omega)=\epsilon^{1+1/\kappa}
(\hat U(\epsilon r)+\tilde U(\epsilon r,\epsilon)),
\]
with $\hat V$, $\hat U$ from \eqref{dirac-existence-Vhatdef},
one has
$
\p\sb\omega v(x,\omega)
=
\frac{\p\epsilon}{\p\omega}
\p\sb\epsilon
\big(\epsilon^{\frac{1}{\kappa}}\hat V(\epsilon x)
+\epsilon^{\frac{1}{\kappa}}\tilde V(\epsilon x,\epsilon)\big)
,
$
so that
\begin{eqnarray}\label{linear-b-st}
&&
\epsilon^{2-\frac 1 \kappa}
\p\sb\omega v(\epsilon^{-1}y,\omega)
\\
\nonumber
&&
=
-\omega
\Big(
\frac{\hat V(y)}{\kappa}
+y\cdot\nabla\hat V(y)
+\frac{\tilde V(y,\epsilon)}{\kappa}
+y\cdot\nabla\tilde V(y,\epsilon)
+\epsilon\p\sb\epsilon\tilde V(y,\epsilon)
\Big).
\end{eqnarray}
Using \eqref{dirac-existence-v-u-tilde-small-better}
from Theorem~\ref{dirac-existence-theorem-solitary-waves-c1}
to bound the $y\cdot\nabla\tilde V$-term,
one has
\[
\norm{\abs{y}\nabla\sb y\tilde V(\abs{y},\epsilon)}\sb{L^2(\R^n)}
=O(\epsilon^{2\varkappa});
\]
due to \eqref{linear-b-norm-p-epsilon-w}
from Theorem~\ref{dirac-existence-theorem-solitary-waves-c1},
$
\norm{\p\sb\epsilon \tilde V(\cdot,\epsilon)}
\sb{H^1(\R^n,\R^2)}
=O(\epsilon^{2\varkappa-1}).
$
Taking into account these estimates
in \eqref{linear-b-st}, we arrive at
\[
\epsilon^{2-\frac 1 \kappa}
(\p\sb\omega v)(\epsilon^{-1}y,\epsilon)
=
-\omega
\Big(
\frac{1}{\kappa}\hat V(y)
+y\!\cdot\!\nabla\hat V(y)
\Big)
+O\sb{L^2(\R^n)}(\epsilon\sp{2\varkappa}),
\]
with a similar expression for $\epsilon^{2-\frac 1 \kappa}\p\sb\omega u$.
This leads to
\begin{eqnarray}\label{linear-b-almost-theta}
\epsilon^{2-\frac 1 \kappa}
(\p\sb\omega\bmupphi\sb\omega)(\epsilon^{-1}y)
=
-\omega
\Big(
\frac{1}{\kappa}\hat V(y)
+y\!\cdot\!\nabla\hat V(y)
\Big)
\bmXi
+O\sb{L^2(\R^n)}(\epsilon\sp{2\varkappa}).
\end{eqnarray}
Taking into account that
$
\e_2(y,\omega)
=
\epsilon^{2-\frac 1 \kappa}(\p\sb\omega\bmupphi\sb\omega)(\epsilon^{-1}y)
$
(cf. \eqref{linear-b-def-e1-e2}),
the relation \eqref{linear-b-almost-theta} allows us to define
\begin{eqnarray}\label{linear-b-def-e2-m}
\e_2(m):=
\lim\sb{\omega\to m}\e_2(\omega)
=
\lim\sb{\omega\to m}
\epsilon^{2-\frac 1 \kappa}(\p\sb\omega\bmupphi\sb\omega)(\epsilon^{-1}\,\cdot\,)
=\bmXi\theta,
\end{eqnarray}
with
$
\theta
$
from \eqref{linear-b-def-theta-0}.
By \eqref{linear-b-almost-theta},
the convergence in \eqref{linear-b-def-e2-m}
is in $L^2(\R^n,\C^{2N})$.

For $a=4$, one has:
\[
\lim\sb{\omega\to m}\e_4(\omega)
=
\lim\sb{\omega\to m}P\sb{\delta,\epsilon}E\sb 4
=
E\sb 4
+
\lim\sb{\omega\to m}
(P\sb{\delta,\epsilon}-p\sb\delta)E\sb 4
=E\sb 4,
\]
with the limit holding in $L^2$ norm.
In the last relation, we used the relation
$p\sb{\delta}E_a=E_a$, $1\le a\le 4$,
and
Lemma~\ref{linear-b-lemma-same-rank}.

For $a=3$,
the result follows from
$\e_3(\omega)
=\eubJ\eubL(\omega)\e_4(\omega)
=\eubJ\eubL(\omega)P_{\delta,\omega}\e_4(\omega)$
since
$\eubJ\eubL(\omega)P_{\delta,\omega}$ is a bounded operator.

We also point out that not only
$\e_1(\omega)$ and $\e_2(\omega)$,
but also
$\e_3(\omega)$ and $\e_4(\omega)$
are real-valued;
this follows from the observation that
$E_4\in L^2(\R^n,\R^{2N})$ is real-valued, while
$P_{\delta,\omega}$ commutes with the operator
$\bmK:\,\C^{2N}\to \C^{2N}$ of complex conjugation
since
$\eubJ\eubL$ has real coefficients.
\end{proof}

In the vector space
$\range(P\sb{\delta,\epsilon})$
we may choose the basis
(cf. Lemma~\ref{linear-b-lemma-dim-ker-nld})
\ac{BELOW, $y^i$ or $x^i$????}
\begin{equation}\label{linear-b-n-g-basis}
\big\{
\e_a(\omega),
\ 1\le a\le 4;
\ \ \ \p_{y^i}\bm\Phi,
\ \ \omega y^i\eubJ\bm\Phi
-\frac{\epsilon}{2}\bmupalpha\sp i\bm\Phi,
\ \ 1\le i\le n;
\ \ \ \bm\Theta_k,
\ 1\le k\le N-2
\big\},
\end{equation}
where $\bm\Phi$ is from \eqref{linear-b-def-e1-e2}
and
$\bm\Theta_k(\omega)$ are certain vectors from
$\ker(\eubJ\mathscr{L}(\omega))$,
with $1\le k\le N-2$
due to Proposition~\ref{linear-b-prop-jl-limit}
(which states that $\rank P\sb{\delta,\epsilon}=2n+N+2$,
$\dim\ker(\eubJ\mathscr{L}(\omega)\at{P\sb{\delta,\epsilon}})=n+N-1$).

\begin{remark}
When $n=3$ and $N=4$,
there are three
vectors $\bm\Theta_k(\omega)$
corresponding to infinitesimal rotations
around three coordinate axes,
but, as it was mentioned in \cite{MR3311594},
the span of these vectors,
$\mathop{\rm span}\{\bm\Theta_k;\,1\le k\le 3\}$,
turns out to contain
the null eigenvector $\e_1(\omega)$.
\end{remark}

In the basis
\eqref{linear-b-n-g-basis}
of the space
$\range(P\sb{\delta,\epsilon})$,
the operator
$(\eubJ\mathscr{L}(\omega)-\lambda I_{N})
\at{\range(P\sb{\delta,\epsilon})}$
is represented by
\begin{equation}\label{linear-b-dv-rbm-new}
M\sb\omega-\lambda I_{N}
=
\begin{bmatrix}
-\lambda&1&\sigma_1(\omega)&0&0&0&0
\\
0&-\lambda&\sigma_2(\omega)&0&0&0&0
\\
0&0&\sigma_3(\omega)-\lambda&1&0&0&0
\\
0&0&\sigma_4(\omega)&-\lambda&0&0&0
\\
\vdots&\vdots&\vdots&\vdots&-\lambda I_{n}&I_{n}&0
\\
\vdots&\vdots&\vdots&\vdots&0&-\lambda I_{n}&0
\\
\vdots&\vdots&\vdots&\vdots&0&0&-\lambda I_{{N-2}}
\end{bmatrix},
\end{equation}
where vertical dots denote columns of irrelevant coefficients
and
\[
\sigma_a(\omega),
\qquad
1\le a\le 4,
\]
are some continuous functions of $\omega$.
We used Lemma~\ref{linear-b-lemma-e1-e2}
and the relations
\[
\eubJ\mathscr{L}\Big(
\omega y^i\eubJ\bm\Phi
-\frac{\epsilon}{2}\bmupalpha\sp{i}\bm\Phi
\Big)
=\p_{y^i}\bm\Phi,
\qquad
1\le i\le n,
\]
which follow from \eqref{linear-b-l-phi-phi},
\eqref{linear-b-def-l-script},
and \eqref{linear-b-def-e1-e2}.
Considering \eqref{linear-b-dv-rbm-new}
at $\lambda=0$ and $\epsilon=0$,
one concludes from \eqref{linear-b-jordan} that
\begin{equation}\label{linear-b-dv-sigma0100}
\sigma_1(m)=\sigma_3(m)=\sigma_4(m)=0,
\qquad
\sigma_2(m)=1.
\end{equation}
 From \eqref{linear-b-dv-rbm-new}, we also have
\begin{equation}\label{linear-b-det-m}
\det(M\sb\omega-\lambda)
=(-\lambda)^{2n+N}(\lambda^2-\lambda\sigma_3(\omega)-\sigma_4(\omega)).
\end{equation}

\begin{lemma}\label{linear-b-lemma-e-alpha-e}
For any solitary wave $\phi(x)e^{-\jj\omega t}$
with $\phi\in H^1_{1/2}(\R^n)$
and any $1\le i\le n$,
one has
$
\langle\phi,\alpha\sp i\phi\rangle=0$.
\end{lemma}

\begin{proof}
The local version of the charge conservation,
$\p\sb\mu\mathscr{J}\sp\mu=0$,
with $\mathscr{J}\sp\mu(t,x)=\bar\psi(t,x)\gamma\sp\mu\psi(t,x)$,
when applied to a solitary wave
with stationary charge and current densities,
$\mathscr{J}\sp\mu(t,x)=\bar\phi(x)\gamma\sp\mu\phi(x)$,
yields the desired identity:
for any $1\le i\le n$,
\[
0=\p\sb t\int\sb{\R^n}\mathscr{J}\sp 0(x) x^i\,dx
=-\sum\sb{j=1}^n\int\sb{\R^n}\big(\p\sb j\mathscr{J}\sp j(x)\big)x^i\,dx
=\int\sb{\R^n}\mathscr{J}\sp i(x)\,dx.
\qedhere
\]
\end{proof}

Expanding
$\eubJ\mathscr{L}\e_3(\omega)$
over the basis in $\range(P\sb{\delta,\epsilon})$
(see \eqref{linear-b-n-g-basis}),
we conclude that
for some continuous functions
$\gamma\sb i(\omega)$ and $\rho\sb i(\omega)$,
$1\le i\le n$,
and $\tau\sb k(\omega)$, $1\le k\le N-2$,
there is a relation
\ac{BELOW, $y^i$ or $x^i$????}
\begin{equation}\label{linear-b-dv-jh2-e2-e4-new}
\eubJ\mathscr{L}
\e_3
=
\sum\sb{a=1}\sp{4}\sigma_a
\e_a
+\sum\sb{i=1}\sp{n}
\Big(
\gamma\sb i
\p_{y^i}\bm\Phi
+
\rho\sb i
\Big(\omega y\sp i\eubJ\bm\Phi
-\frac{\epsilon}{2}\bm\upalpha\sp i\bm\Phi
\Big)
\Big)
+\sum\sb{k=1}\sp{N-2}\tau\sb k
\bm\Theta\sb k,
\end{equation}
valid for $\omega\in(\upomega_2,m]$.
Above, $\sigma_a(\omega)$, $1\le a\le 4$, are continuous functions from \eqref{linear-b-dv-rbm-new}.
Pairing \eqref{linear-b-dv-jh2-e2-e4-new}
with $\bm\Phi=\bm\Phi(\omega)=\eubJ^{-1}\e_1(\omega)$,
we get:
\begin{equation}\label{linear-b-sigma2-sigma4}
0
=
\sigma_2(\omega)
\left\langle
\eubJ^{-1}\e_1(\omega),\e_2(\omega)
\right\rangle
+
\sigma_4(\omega)
\left\langle
\eubJ^{-1}\e_1(\omega),\e_4(\omega)
\right\rangle,
\quad
\omega\in(\upomega_2,m].
\end{equation}
We took into account that
one has
$\langle\bm\Phi,\bm{v}\rangle
=\langle\mathscr{L}\e_2,\bm{v}\rangle
=\langle\e_2,\mathscr{L}\bm{v}\rangle=0$
for any
$
\bm{v}
\in
\ker(\eubJ\mathscr{L}),
$
the identities
\[
\left\langle
\eubJ^{-1}\e_1,\eubJ\mathscr{L}\e_3(\omega)
\right\rangle
=
-\left\langle
\mathscr{L}\e_1,\e_3(\omega)
\right\rangle=0,
\]
\[
\langle \eubJ^{-1}\e_1,\e_3\rangle
=\langle \eubJ^{-1}\e_1,\eubJ\mathscr{L}\e_4\rangle
=-\langle\mathscr{L}\e_1,\e_4\rangle
=0,
\]
and also the identity
$\langle\bm\Phi,
\omega y^i\eubJ\bm\Phi
-\frac{\epsilon}{2}\bmupalpha\sp i\bm\Phi
\rangle=0$
which holds due to Lemma~\ref{linear-b-lemma-e-alpha-e}
and due to
$
\bm\Phi\sp\ast
\eubJ\bm\Phi
\equiv 0
$
(the left-hand side is skew-adjoint while all the quantities
are real-valued).
Since
\begin{eqnarray}\label{linear-b-e1-e2-q}
\langle\eubJ^{-1}\e_1(\omega),\e_2(\omega)\rangle
&=&
\epsilon^{-\frac 2 \kappa}
\langle\bmupphi\sb\omega(\epsilon^{-1}\cdot),
(\p\sb\omega\bmupphi\sb\omega)(\epsilon^{-1}\cdot)\rangle
\nonumber
\\
&=&\epsilon^{n-\frac 2 \kappa}
\langle\bmupphi\sb\omega,\p\sb\omega\bmupphi\sb\omega\rangle
=\fra{\p\sb\omega Q(\phi\sb\omega)}{2}
\end{eqnarray}
(we took into account that $n-\frac 2 \kappa=0$),
the relation
\eqref{linear-b-sigma2-sigma4} takes the form
\begin{equation}\label{linear-b-sigma2-sigma4-better}
\sigma_2(\omega)\p\sb\omega Q(\phi\sb\omega)/2
=\mu(\omega)\sigma_4(\omega),
\qquad
\omega\in(\upomega_2, m],
\end{equation}
where
$
\mu(\omega):=-\langle \eubJ^{-1}\e_1(\omega),\e_4(\omega)\rangle
$
is a continuous function
of $\omega\in(\upomega_2,m]$.

\begin{remark}
By \eqref{linear-b-e1-e2-q} and Lemma~\ref{linear-b-lemma-e1-e2},
$\p\sb\omega Q(\phi\sb\omega)$
is a continuous function of $\omega\in(\upomega_2,m]$.
\end{remark}

\begin{lemma}\label{linear-b-lemma-mu-positive}
There is $\upomega_3\in(\upomega_2,m)$ such that
$
\mu(\omega)>0
$
for $\upomega_3<\omega\le m$.
\end{lemma}

\begin{proof}
We have
\[
\mu(\omega)
=-\langle\bm\Phi,\e_4\rangle
=-\langle\bm\Phi,P\sb{\delta,\epsilon}(\e_4)\rangle
=-\langle\eubJ^{-1}\e_1(m),\e_4(m)\rangle
+O(\epsilon),
\]
while \eqref{linear-b-jordan} yields
\[
-\langle
\eubJ^{-1}\e_1,\e_4\rangle
\at{\omega=m}
=-\langle\eubK \e_2,\e_4\rangle
\at{\omega=m}
=-\langle\eubJ\eubK \e_3,\eubJ^{-1}\e_3\rangle
\at{\omega=m}
=
\big\langle
\bmXi\alpha,
\eubK
\bmXi\alpha
\big\rangle
>0.
\]
Above, we used \eqref{linear-b-E3-E4}
and the explicit form of $E_2$ and $E_3$.
\end{proof}

\begin{lemma}
There is $\upomega_4\in[\upomega_3,m)$ such that
$\sigma_3(\omega)\equiv 0$
for $\omega\in[\upomega_4,m]$.
\end{lemma}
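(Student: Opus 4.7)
The plan is to exploit the Hamiltonian symmetry of the spectrum of $\eubJ\eubL(\omega)$. It is a standard fact (see e.g.\ \cite{MR3530581}) that $\sigma(\eubJ\eubL(\omega))$ is invariant under the reflections $\lambda\mapsto\bar\lambda$ (since $\eubJ\eubL$ has real coefficients) and $\lambda\mapsto-\bar\lambda$ (since $\eubJ^{-1}(\eubJ\eubL)\eubJ=\eubL\eubJ=-(\eubJ\eubL)^*$, which yields $\sigma(\eubJ\eubL)=-\overline{\sigma(\eubJ\eubL)}$), and hence under $\lambda\mapsto-\lambda$. This symmetry preserves algebraic multiplicities.

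First, I would observe that, since the disk $\mathbb{D}_{\delta\epsilon^2}$ is centered at the origin, the multiset of eigenvalues of $\eubJ\eubL(\omega)$ lying in $\mathbb{D}_{\delta\epsilon^2}$, counted with algebraic multiplicity, is invariant under $\lambda\mapsto-\lambda$. Rescaling by $\epsilon^{-2}$ and using $\eubJ\mathscr{L}(\omega)=\epsilon^{-2}\eubJ\eubL(\omega)$, the same invariance holds for the eigenvalue multiset of the finite matrix $M_\omega$ representing $\eubJ\mathscr{L}(\omega)\at{\range P_{\delta,\omega}}$.

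Next, I would invoke the factorization \eqref{det-m}. The eigenvalue multiset of $M_\omega$ is $\{\underbrace{0,\dots,0}_{2n+N},\lambda_+(\omega),\lambda_-(\omega)\}$, where $\lambda_\pm$ are the (possibly coinciding) roots of the quadratic factor $\lambda^2-\lambda\sigma_3(\omega)-\sigma_4(\omega)$. Invariance of this multiset under $\lambda\mapsto-\lambda$ forces $\{\lambda_+,\lambda_-\}=\{-\lambda_+,-\lambda_-\}$ as multisets. A short case analysis — distinct nonzero roots give $\lambda_+=-\lambda_-$; a double root or a single zero root force $\lambda_+=\lambda_-=0$ — shows that in every case $\lambda_+(\omega)+\lambda_-(\omega)=0$. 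By Vieta's formulas, $\sigma_3(\omega)=\lambda_+(\omega)+\lambda_-(\omega)=0$ for every $\omega$ in the range where $P_{\delta,\omega}$ is defined, namely $\omega\in(\upomega_1,m)$. Continuity of $\sigma_3$ on $(\upomega_1,m]$, inherited from the continuity of the $\e_i(\omega)$ established in Lemma~\ref{lemma-e1-e2}, extends the vanishing to $\omega=m$ (consistently with \eqref{dvk-sigma0100}). One may therefore take $\upomega_3=\upomega_2$.

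I do not anticipate any substantive obstacle. The only point that requires care is to verify that the Hamiltonian symmetry genuinely preserves algebraic multiplicities, so that the ``multiset invariance under negation'' literally applies to the pair $\{\lambda_+,\lambda_-\}$, and to check that the degenerate cases (a double root, or a root equal to $0$) are correctly handled in the case analysis above.
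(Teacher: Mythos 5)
Your proof is correct, but it is a genuinely different argument from the paper's. The paper proceeds algebraically: it applies $(\eubJ\mathscr{L})^2$ to the expansion \eqref{dvk-jh2-e2-e4-new}, pairs the result with $\eubJ^{-1}\e_4$, and uses the skew-adjointness of $\mathscr{L}\eubJ\mathscr{L}$ (legitimate because the $\e_i(\omega)$ are real-valued, cf.\ Lemma~\ref{lemma-e1-e2}) to kill all terms except $\sigma_3(\omega)\langle\eubJ^{-1}\e_4,(\eubJ\mathscr{L})^2\e_3\rangle$; it then checks that this pairing tends to $-\langle\e_4,\bmupphi\rangle\at{\omega=m}>0$ (Lemma~\ref{lemma-mu-positive}), which forces $\sigma_3\equiv 0$ only for $\omega$ close enough to $m$. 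You instead invoke the Hamiltonian symmetry $\sigma(\eubJ\eubL)=-\sigma(\eubJ\eubL)$ (with algebraic multiplicities, which are indeed preserved: similarity and passage to the adjoint preserve the rank of Riesz projectors, and complex conjugation preserves generalized eigenspaces since $\eubJ\eubL$ has real coefficients), note that $\mathbb{D}_{\delta\epsilon^2}$ is negation-invariant, and read off $\sigma_3=\lambda_++\lambda_-=0$ from Vieta applied to \eqref{det-m}; your case analysis of the degenerate configurations is complete. Your route is shorter, does not require the positivity input of Lemma~\ref{lemma-mu-positive}, and yields the stronger conclusion that $\sigma_3$ vanishes on the entire interval where $P_{\delta,\omega}$ is defined rather than only near $\omega=m$; the paper's computation, on the other hand, stays entirely within the finite-dimensional moving-frame formalism that it then reuses to relate $\sigma_4$ to $\p_\omega Q(\phi_\omega)$, and its key nondegeneracy $\mu(\omega)>0$ is needed there anyway. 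Either argument suffices for the lemma as stated.
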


\begin{proof}
Applying $(\eubJ\mathscr{L}(\omega))^2$ to \eqref{linear-b-dv-jh2-e2-e4-new},
we get
\[
(\eubJ\mathscr{L})^3
\e_3(\omega)
=
\sigma_3(\omega)(\eubJ\mathscr{L})^2\e_3(\omega)
+
\sigma_4(\omega)(\eubJ\mathscr{L})^2\e_4(\omega).
\]
Coupling this relation with $\eubJ^{-1}\e_4$
and using the identities
\[
\langle
\eubJ^{-1}\e_4,(\eubJ\mathscr{L})^3\e_3\rangle
=
\langle\e_3,\mathscr{L}\eubJ\mathscr{L}\e_3\rangle=0
\]
and
\[
\langle
\eubJ^{-1}\e_4,(\eubJ\mathscr{L})^2\e_4\rangle
=-\langle\e_4,\mathscr{L}\eubJ\mathscr{L}\e_4\rangle=0
\]
(both of these due to skew-adjointness of $\mathscr{L}\eubJ\mathscr{L}$,
taking into account that
$\e_a(\omega)$, $1\le a\le 4$,
are real-valued by Lemma~\ref{linear-b-lemma-e1-e2},
while $\eubJ$ and $\mathscr{L}$ have real coefficients),
we have
\begin{equation}\label{linear-b-dv-s3asdf}
\sigma_3(\omega)
\langle \eubJ^{-1}\e_4,
(\eubJ\mathscr{L})^2\e_3\rangle=0.
\end{equation}
The factor at $\sigma_3(\omega)$
is nonzero
for $\omega<m$ sufficiently close to $m$.
Indeed, using \eqref{linear-b-dv-sigma0100},
\[
\langle
\eubJ^{-1}\e_4,(\eubJ\mathscr{L})^2\e_3\rangle
\at{\omega=m}
=
\langle
\eubJ^{-1}\e_4,\,
\sigma_2\e_1
+
\sigma_3 \eubJ\mathscr{L}\e_3
+
\sigma_4\e_3
\rangle
\at{\omega=m}
\]
\[
=
\langle
\eubJ^{-1}\e_4,
\e_1
\rangle
\at{\omega=m}
=
-\langle
\e_4,\bmupphi
\rangle
\at{\omega=m},
\]
which is positive due to Lemma~\ref{linear-b-lemma-mu-positive}.
Due to continuity in $\omega$ of the coefficient at $\sigma_3(\omega)$
in \eqref{linear-b-dv-s3asdf}, we conclude that
$\sigma_3(\omega)$ is identically zero
for $\omega\in[\upomega_4,m]$,
with some $\upomega_4<m$.
\end{proof}

Since $\sigma_3(\omega)$ is identically zero
for $\omega\in[\upomega_4,m]$,
we conclude from \eqref{linear-b-det-m}
that the nonzero eigenvalues of $\eubJ\mathscr{L}(\omega)$
satisfy
$
\lambda^2-\sigma_4(\omega)=0,
$
$
\omega\in[\upomega_4,m].
$
By \eqref{linear-b-dv-sigma0100} and Lemma~\ref{linear-b-lemma-mu-positive},
the relation \eqref{linear-b-sigma2-sigma4-better}
shows that
$\sigma_4(\omega)$ is of the same sign as $\p\sb\omega Q(\phi\sb\omega)$.
Thus, if $\p\sb\omega Q(\phi\sb\omega)>0$
for $\omega\lessapprox m$,
then for these values of $\omega$
there are two nonzero real eigenvalues of $\eubJ\mathscr{L}(\omega)$,
one positive
(indicating the linear instability)
and one negative, both of magnitude
$\sim\sqrt{\p\sb\omega Q(\phi\sb\omega)}$
for $\omega\lessapprox m$;
hence, there are two real eigenvalues of $\eubJ\eubL$,
of magnitude $\sim\epsilon^2\sqrt{\p\sb\omega Q(\phi\sb\omega)}$.
This completes the proof of
Theorem~\ref{linear-b-theorem-scaled-lambda}.

\section{Bifurcations from embedded thresholds}
\label{linear-b-sect-vb-thresholds-1}

\subsection{Absence of bifurcations from the essential spectrum}
Now we proceed to the proof of
Theorem~\ref{linear-b-theorem-2m}~\itref{linear-b-theorem-2m-1}
and ~\itref{linear-b-theorem-2m-2}:
we need to prove that the sequence
\eqref{linear-b-def-Lambda-j-0},
\begin{equation}\label{linear-b-def-Lambda-j}
Z_j=-\frac{2\omega_j+\jj\lambda_j}{\epsilon_j^2}
\in\C,
\qquad
j\in\N,
\end{equation}
can only accumulate to either the
discrete spectrum
or the threshold
of the operator $\eurl\sb{-}$
from \eqref{linear-b-def-l-small-pm}.
Moreover, we will see that the accumulation to the threshold
is not possible when it is a regular point of the essential spectrum of $\eurl\sb{-}$
(neither an $L^2$-eigenvalue nor a virtual level).


\begin{lemma}\label{linear-b-lemma-psi-a-m}
There is $C>0$ such that
\[
\norm{u_\kappa^\kappa\pi\sp{+}\bmPsi\sb{j}}\sb{L^2}
\le
C
\epsilon_j
\norm{u_\kappa^\kappa\bmPsi\sb j}\sb{L^2},
\qquad
\forall j\in\N.
\]
\end{lemma}

\begin{proof}
The relations
\eqref{linear-b-4p-plus} and \eqref{linear-b-4a-plus}
yield
\[
\begin{bmatrix}
m-\omega_j+\jj\lambda\sb j
&
\epsilon_j\eubD_0
\\
\epsilon_j\eubD_0
&
-(m+\omega_j-\jj\lambda\sb j)
\end{bmatrix}
\begin{bmatrix}
\pi\sp{+}\sb{P}\bmPsi\sb j
\\
\pi\sp{+}\sb{A}\bmPsi\sb j
\end{bmatrix}
=
-
\epsilon_j^2
\begin{bmatrix}
\pi\sp{+}\sb{P}\eubV\bmPsi\sb j
\\
\pi\sp{+}\sb{A}\eubV\bmPsi\sb j
\end{bmatrix},
\]
hence
\begin{equation}\label{linear-b-delta-tilde-z}
\begin{bmatrix}
\pi\sp{+}\sb{P}\bmPsi\sb j
\\
\pi\sp{+}\sb{A}\bmPsi\sb j
\end{bmatrix}
=
\begin{bmatrix}
m+\omega_j-\jj\lambda\sb j
&
\epsilon_j\eubD_0
\\
\epsilon_j\eubD_0
&
-(m-\omega_j+\jj\lambda\sb j)
\end{bmatrix}
\big(
\Delta_y
+\zeta_j^2
\big)^{-1}
\begin{bmatrix}
\pi\sp{+}\sb{P}\eubV\bmPsi\sb j
\\
\pi\sp{+}\sb{A}\eubV\bmPsi\sb j
\end{bmatrix},
\end{equation}
with
\[
\zeta_j^2
:=
\big((\omega_j-\jj\lambda\sb j)^2-m^2\big)/{\epsilon_j^2}
=(8m^2+o(1))/\epsilon_j^{2},
\qquad
\Re\zeta_j\ge 0;
\qquad
j\in\N.
\]
(Note that since
$\omega_j\to m$,
$\Re\lambda\sb j\ne 0$,
and
$\lambda\sb j\to 2 m \jj$,
one has $\Im\zeta_j^2\ne 0$ for all but finitely many
$j\in\N$ which we discard.)
The limiting absorption principle
from Lemma~\ref{linear-b-lemma-lap-agmon}
with $\nu=0,\,1$ and with $z=\zeta_j^2$
shows that there is $C>0$ such that
\begin{eqnarray}\label{linear-b-lap-large-1}
\norm{u_\kappa^\kappa\circ (\Delta_y+\zeta_j^2)^{-1}\circ u_\kappa^\kappa}
&\le&
C\abs{\zeta_j}^{-1},
\qquad\forall j\in\N,
\\[1ex]
\label{linear-b-lap-large-2}
\norm{u_\kappa^\kappa\circ \epsilon_j\eubD_0(\Delta_y+\zeta_j^2)^{-1}\circ u_\kappa^\kappa}
&\le&
C\epsilon_j,
\qquad\forall j\in\N,
\end{eqnarray}
where
$u_\kappa^\kappa\circ\ldots\circ u_\kappa^\kappa$
denotes the compositions with
the operators of multiplication by $u_\kappa^\kappa$.
Applying
\eqref{linear-b-lap-large-1}
and
\eqref{linear-b-lap-large-2}
to
\eqref{linear-b-delta-tilde-z}
leads to
\begin{eqnarray}
&&
\hskip -30pt
\norm{u_\kappa^\kappa\pi\sp{+}\bmPsi\sb j}
\le
C
\big(
\norm{u_\kappa^\kappa\epsilon\eubD_0(\Delta_y+\zeta_j^2)^{-1}
\pi\sp{+}\eubV\bmPsi\sb j}
+
\norm{u_\kappa^\kappa(\Delta_y+\zeta_j^2)^{-1}
\pi\sp{+}\eubV\bmPsi\sb j}
\big)
\nonumber
\\
\nonumber
&&
\hskip 20pt
\le
C
\big(
\norm{u_\kappa^\kappa\circ\epsilon\eubD_0(\Delta_y+\zeta_j^2)^{-1}\circ u_\kappa^\kappa}
+
\norm{u_\kappa^\kappa\circ(\Delta_y+\zeta_j^2)^{-1}\circ u_\kappa^\kappa}
\big)
\norm{u_\kappa^\kappa\bmPsi\sb j}
\\
\nonumber
&&
\hskip 20pt
\le
C\epsilon_j\norm{u_\kappa^\kappa\bmPsi\sb j},
\qquad\forall j\in\N.
\end{eqnarray}
We used the bound
$\norm{\eubV(y,\epsilon)}\sb{\End(\C^{2N})}\le C \abs{u_\kappa(y)}^{2\kappa}$
from Lemma~\ref{linear-b-lemma-w}.
\end{proof}

\begin{lemma}\label{linear-b-lemma-epsilon-square}
The values $Z_j$, $j\in\N$ from \eqref{linear-b-def-Lambda-j} are uniformly bounded:
There is $C>0$ such that
\[
\abs{Z_j}\le C, \qquad \forall j\in\N.
\]
\end{lemma}

In other words, the sequence $\big(Z_j\big)_{j\in\N}$
has no accumulation point at infinity.

\begin{proof}
Let us consider the values of $j\in\N$ for which
the following inequality is satisfied:
\begin{eqnarray}\label{linear-b-ge-kappa-0}
\abs{(\omega_j+\jj\lambda\sb j)^2-m^2}
<\epsilon_j^2.
\end{eqnarray}
This implies that
$
(\omega_j+\jj\lambda_j)^2=m^2+O(\epsilon_j^2),
$
hence
\[
-\omega_j-\jj\lambda_j=m+O(\epsilon_j^2)
\]
(since $\lambda_j\to -2m\jj$ as $\omega_j\to m$);
taking into account the relation $\epsilon_j^2 Z_j=-2\omega_j-\jj\lambda_j$
(see \eqref{linear-b-def-Lambda-j}),
we arrive at
\[
\epsilon_j^2 Z_j=m-\omega_j+O(\epsilon_j^2)
=O(\epsilon_j^2),
\]
which shows that
$\abs{Z_j}$ are uniformly bounded
for the values of $j$ for which \eqref{linear-b-ge-kappa-0} takes place.
Now we discard these values of  $j\in\N$;
assuming that there are infinitely many left (or else there is nothing to prove),
we have:
\begin{eqnarray}\label{linear-b-ge-kappa}
\abs{(\omega_j+\jj\lambda\sb j)^2-m^2}
\ge
\epsilon_j^2,
\qquad \forall j\in\N.
\end{eqnarray}
We write
\eqref{linear-b-4p-minus}, \eqref{linear-b-4a-minus}
as the following system:
\begin{equation}\label{linear-b-d-s-p-a}
\begin{bmatrix}
m-\omega_j-\jj\lambda\sb j
&
\epsilon_j\eubD_0
\\
\epsilon_j\eubD_0
&
-(m+\omega_j+\jj\lambda\sb j)
\end{bmatrix}
\begin{bmatrix}
\pi\sp{-}\sb{P}\bmPsi\sb j
\\
\pi\sp{-}\sb{A}\bmPsi\sb j
\end{bmatrix}
=
-
\epsilon_j^2
\begin{bmatrix}
\pi\sp{-}\sb{P}\eubV\bmPsi\sb j
\\
\pi\sp{-}\sb{A}\eubV\bmPsi\sb j
\end{bmatrix},
\qquad j\in\N,
\end{equation}
which can then be rewritten as follows:
\begin{eqnarray}\label{linear-b-psi-minus-v-psi}
\hskip -10pt
\begin{bmatrix}
\pi\sp{-}\sb{P}\bmPsi\sb j
\\
\pi\sp{-}\sb{A}\bmPsi\sb j
\end{bmatrix}
=
\begin{bmatrix}
m+\omega_j+\jj\lambda\sb j
&
\epsilon_j\eubD_0
\\
\epsilon_j\eubD_0
&
-(m-\omega_j-\jj\lambda\sb j)
\end{bmatrix}
\Big(
\Delta_y
+\upnu\sb j
\Big)^{-1}
\begin{bmatrix}
\pi\sp{-}\sb{P}\eubV\bmPsi\sb j
\\
\pi\sp{-}\sb{A}\eubV\bmPsi\sb j
\end{bmatrix},
\quad
\end{eqnarray}
with
\begin{eqnarray}\label{linear-b-def-mu-tilde}
\upnu\sb j
:=
\frac{(\omega_j+\jj\lambda\sb j)^2-m^2}
{\epsilon_j^2},
\qquad
j\in\N.
\end{eqnarray}
We notice that
$\abs{\upnu\sb j}\ge 1
$
by \eqref{linear-b-ge-kappa}
and that $\Im\upnu\sb j\ne 0$
except perhaps for finitely many values of $j$,
which we discard.
Applying
\eqref{linear-b-lap-large-1}
and
\eqref{linear-b-lap-large-2}
to \eqref{linear-b-psi-minus-v-psi},
we derive:
\begin{eqnarray}
\label{linear-b-lemma-psi-p-0}
\norm{u_\kappa^\kappa\pi\sp{-}\bmPsi\sb j}
\le
C\big(
\epsilon_j+\abs{\upnu\sb j}^{-{1}/{2}}
\big)
\norm{u_\kappa^\kappa\bmPsi\sb j},
\qquad
\forall j\in\N.
\end{eqnarray}
By Lemma~\ref{linear-b-lemma-psi-a-m}
and \eqref{linear-b-lemma-psi-p-0},
there is $C>0$ such that
\begin{eqnarray}\label{linear-b-z-j}
\hskip -16pt
\norm{u_\kappa^\kappa\bmPsi\sb j}
\le
\norm{u_\kappa^\kappa\pi\sp{+}\bmPsi\sb j}
+\norm{u_\kappa^\kappa\pi\sp{-}\bmPsi\sb j}
\le
C
\big(
\epsilon_j
+\abs{\upnu\sb j}^{-{1}/{2}}
\big)
\norm{u_\kappa^\kappa\bmPsi\sb j},
\ \ \forall j\in\N.
\quad
\end{eqnarray}
Assume that
$
\mathop{\lim\sup}\limits\sb{j\to\infty}
\abs{\upnu_j}
=+\infty.
$
Then the coefficient at $\norm{u_\kappa^\kappa\bmPsi\sb j}$
in the right-hand side of \eqref{linear-b-z-j}
would go to zero for an infinite subsequence of $j\to\infty$;
since $\bmPsi\sb j\not\equiv 0$,
we arrive at the contradiction.

Thus, $\abs{\upnu_j}$ are uniformly bounded. From \eqref{linear-b-def-mu-tilde},
we derive:
\[
\omega_j+\jj\lambda_j=-\sqrt{m^2+\epsilon_j^2\upnu_j}
=-m+O(\epsilon_j^2),
\]
where we chose the ``positive'' branch of the square root
(for all but finitely many $j\in\N$)
since
$\omega\sb j\to m$
and
$\lambda_j\to 2m\jj$
as $j\to\infty$.
This yields
\[
Z_j
=-\frac{2\omega_j+\jj\lambda_j}{\epsilon_j^2}
=-\frac{\omega_j+(\omega_j+\jj\lambda_j)}{\epsilon_j^2}
=-\frac{\sqrt{m^2-\epsilon_j^2}-\sqrt{m^2+\epsilon_j^2\upnu_j}}{\epsilon_j^2},
\qquad j\in\N;
\]
therefore, $Z_j=O(1)$
are uniformly bounded and could not accumulate at infinity.
\end{proof}

Substituting
$
-\frac{m+\omega_j+\jj\lambda\sb j}{\epsilon_j^2}
=
-\frac{2\omega_j+\jj\lambda\sb j}{\epsilon_j^2}
-\frac{m-\omega_j}{\epsilon_j^2}
=
Z_j
-\frac{1}{m+\omega_j},
$
we rewrite \eqref{linear-b-d-s-p-a} as
\begin{equation}\label{linear-b-afa}
\begin{bmatrix}
m-\omega_j-\jj\lambda\sb j&\eubD_0
\\
\eubD_0&
Z_j-\frac{1}{m+\omega_j}
+u_\kappa^{2\kappa}
\end{bmatrix}
\begin{bmatrix}
\pi\sp{-}\sb{P}\bmPsi\sb j
\\
\epsilon_j\pi\sp{-}\sb{A}\bmPsi\sb j
\end{bmatrix}
=-
\begin{bmatrix}
\epsilon_j^2
\pi\sb{P}\sp{-}
\eubV\bmPsi\sb j
\\
\epsilon_j
\pi\sb{A}\sp{-}
\eubV\bmPsi\sb j
-\epsilon_j
u_\kappa^{2\kappa}\pi\sb{A}\sp{-}\bmPsi\sb j
\end{bmatrix}.
\end{equation}
We denote the matrix-valued operator in the left-hand side by
\begin{eqnarray}\label{linear-b-def-a-j}
A_j:=
\begin{bmatrix}
m-\omega_j-\jj\lambda\sb j&\eubD_0
\\
\eubD_0&
Z_j-\frac{1}{m+\omega_j}
+u_\kappa^{2\kappa}
\end{bmatrix},
\end{eqnarray}
\[
A_j:\;
L^2(\R^n,\C^{4N})\to L^2(\R^n,\C^{4N}),
\qquad
\dom(A_j)=H^1(\R^n,\C^{4N});
\qquad
j\in\N.
\]
We note that $(A_j)_{11}\to 2m$ as $j\to\infty$,
hence, for $j$ large enough, $(A_j)_{11}$ is invertible.
By \eqref{linear-b-Schur},
the Schur complement of $(A_j)_{11}$ is given by
\begin{eqnarray}\label{def-s-j}
S_j=(A_j)_{22}-(A_j)_{21}(A_j)_{11}^{-1}(A_j)_{12}
=
\Big(
Z_j-\frac{1}{m+\omega_j}+u_\kappa^{2\kappa}
+\frac{\Delta_y}{m-\omega_j-\jj\lambda_j}
\Big)I_{2N},
\qquad
j\in\N,
\quad
\end{eqnarray}
with $\ S_j:\,L^2(\R^n,\C^{2N})\to L^2(\R^n,\C^{2N})$,
$\ \dom(S_j)=H^2(\R^n,\C^{2N})$
\ for $j\in\N$.

\medskip

Let $Z_0$ be the limit point
of the sequence $\big(Z_j\big)_{j\in\N}$;
by Lemma~\ref{linear-b-lemma-epsilon-square}, $Z_0\ne\infty$.

\begin{lemma}\label{linear-b-lemma-eigenvalue-or-resonance-away}
$Z_0\in\sigma\sb{\mathrm{d}}(\eurl\sb{-})\cup\{1/(2m)\}$.
If, moreover, the threshold $z=1/(2m)$ is a regular point
of the essential spectrum of $\eurl\sb{-}$,
then
$Z_0\in\sigma\sb{\mathrm{d}}(\eurl\sb{-})$.
\end{lemma}

\begin{proof}
We write the inverse of the matrix-valued operator
in the left-hand side of \eqref{linear-b-afa}
using \eqref{linear-b-Schur}:
\begin{eqnarray}\label{linear-b-afa-inv-away}
\hskip -12pt
\begin{bmatrix}
\pi\sp{-}\sb{P}\bmPsi\sb j
\\[1ex]
\epsilon_j\pi\sp{-}\sb{A}\bmPsi\sb j
\end{bmatrix}
\!=-
\frac{1}{(A_j)_{11}}
\begin{bmatrix}
\displaystyle
1
+(A_j)_{11}^{-1}\eubD_0 S_j^{-1}\eubD_0
&
-\eubD_0 S_j^{-1}
\\[1ex]
-S_j^{-1}\eubD_0
&
(A_j)_{11} S_j^{-1}
\end{bmatrix}
\!
\begin{bmatrix}
\epsilon_j^2
\pi\sb{P}\sp{-}
\eubV\bmPsi\sb j
\\[1ex]
\epsilon_j
\pi\sb{A}\sp{-}
\big(
\eubV
\!-\!
u_\kappa^{2\kappa}
\big)
\bmPsi\sb j
\end{bmatrix}\!.
\quad
\end{eqnarray}
Above,
the Schur complement of $(A_j)_{11}=m-\omega_j-\jj\lambda\sb j$
is given by the expression \eqref{def-s-j}:
\[
S_j
=
h_j I_{2N},
\qquad
j\in\N,
\]
with
$h_j:\,L^2(\R^n)\to L^2(\R^n)$
(with domain $\dom(h_j)=H^2(\R^n)$)
given by
\begin{eqnarray*}
&&
h_j
=Z_j-\frac{1}{m+\omega_j}
+u_\kappa^{2\kappa}
+
\frac{\Delta_y}{m-\omega_j-\jj\lambda_j}
\\
&&
=
Z_j
+\frac{1}{m-\omega_j-\jj\lambda_j}
-\frac{1}{m+\omega_j}
-\frac{1}{m-\omega_j-\jj\lambda_j}
\\
&&
\qquad
+\Big(1-\frac{2m}{m-\omega_j-\jj\lambda_j}\Big)u_\kappa^{2\kappa}
+\frac{2m u_\kappa^{2\kappa}}{m-\omega_j-\jj\lambda_j}
+
\frac{\Delta_y}{m-\omega_j-\jj\lambda_j}
\\
&&
=
-\frac{2m}{m-\omega_j-\jj\lambda_j}(\eurl\sb{-}-\hat Z_j)
+\Big(1-\frac{2m}{m-\omega_j-\jj\lambda_j}\Big)u_\kappa^{2\kappa},
\end{eqnarray*}
where the sequence
\[
\hat Z_j
=\frac{m-\omega_j-\jj\lambda_j}{2m}
\Big(
Z_j
+\frac{1}{m-\omega_j-\jj\lambda_j}
-\frac{1}{m+\omega_j}\Big),
\qquad
j\in\N,
\]
has the same limit as the sequence $\big(Z_j\big)\sb{j\in\N}$.

Let $\varOmega\subset\C$ be a compact set as in Lemma~\ref{linear-b-lemma-j-n}:
$\varOmega\cap\sigma\sb{\mathrm{d}}(\eurl\sb{-})=\emptyset$,
and if $z=1/(2m)$ is an eigenvalue or a virtual level of $\eurl\sb{-}$,
then we also assume that $1/(2m)\not\in\varOmega$.
Then, by Lemma~\ref{linear-b-lemma-j-n},
there is $C>0$ such that
\[
\norm{u_\kappa^\kappa\circ(\eurl\sb{-}-z)^{-1}\circ u_\kappa^\kappa}_{L^2\to H^2}
\le C,
\qquad
\forall z\in\varOmega\setminus[1/(2m),+\infty).
\]
Since $Z_0\in\varOmega$,
one has $\hat Z_j\in\varOmega$
and also $\hat Z_j\not\in \sigma(\eurl\sb{-})$
for $j\in\N$ large enough.
We note that
\[
\hat Z_j
=\frac{m-\omega_j-\jj\lambda_j}{2m}
\Big(
\frac{2\omega_j+\jj\lambda_j}{m^2-\omega_j^2}
+\frac{1}{m-\omega-\jj\lambda_j}
-\frac{1}{m-\omega_j}
\Big)
=\frac{1}{2m}
-\frac{(m-\omega_j-\jj\lambda_j)^2}{2m(m^2-\omega_j^2)};
\]
since $\Im\lambda_j\to 2m$ and $\Re\lambda_j>0$,
we conclude that
$\Im\hat Z_j=\frac{(m-\omega_j+\Im\lambda_j)\Re\lambda_j}{2m(m^2-\omega_j^2)}\ne 0$
(hence $\hat Z_j\not\in\sigma(\eurl\sb{-})$)
for all $j\in\N$
except at most
for finitely many values of $j$ which we discard.
Then the mapping
\[
u_\kappa^\kappa\circ S_j^{-1}\circ u_\kappa^\kappa:\;L^2(\R^n)\to H^2(\R^n),
\qquad
j\in\N,
\]
is bounded uniformly in
$j\in\N$.
By duality, so is the mapping
$
H^{-2}(\R^n)\to L^2(\R^n),
$
and, by complex interpolation, so is
\[
u_\kappa^\kappa\circ S_j^{-1}\circ u_\kappa^\kappa:\;H^{-1}(\R^n)\to H^1(\R^n).
\]
It follows that the following maps are also continuous:
\begin{eqnarray*}
u_\kappa^\kappa\circ S_j^{-1}\circ \eubD_0\circ u_\kappa^\kappa:
&
\;L^2(\R^n)\to H^1(\R^n),
\\
u_\kappa^\kappa\circ\eubD_0\circ S_j^{-1}\circ u_\kappa^\kappa:
&
\;H^{-1}(\R^n)\to L^2(\R^n),
\\
u_\kappa^\kappa\circ\eubD_0\circ S_j^{-1}\circ \eubD_0\circ u_\kappa^\kappa:
&\;L^2(\R^n)\to L^2(\R^n),
\end{eqnarray*}
with the bounds which are uniform in $j\in\N$.

We recall that there is the inclusion
$u_\kappa\in C^2(\R^n)$
(see \eqref{linear-b-def-uk}),
with both $u_\kappa$ and its derivatives
exponentially decaying with $\abs{x}$
(\cite[Lemma A.1]{MR3670258});
moreover, $\inf\sb{x\in\R^n}\abs{\p_r u(x)/u(x)}>0$
(\cite[Lemma A.2]{MR3670258}).
As a consequence, for any $\kappa>0$
(with $\kappa<2/(n-2)$ if $n\ge 3$),
the multiplication by $u_\kappa^\kappa$
extends to a continuous mapping in $L^2(\R^n)$,
in $H^1(\R^n)$,
and by duality in $H^{-1}(\R^n)$.
Similarly,
the matrix multiplication by
$[\eubD_0,u_\kappa^\kappa]=\kappa u_\kappa^{\kappa-1}\eubJ\bmupalpha^i\p_{x^i}u_\kappa$
extends to a continuous mapping
in $L^2(\R^n,\C^{2N})$.

Therefore,
if either $Z_j\to Z_0\in\C\setminus\sigma(\eurl\sb{-})$
or
$Z_j\to Z_0\in(1/(2m),+\infty)$
(with $Z_0\ne 1/(2m)$ if $z=1/(2m)$ either an eigenvalue or a virtual level
of $\eurl\sb{-}$),
with $\Im Z_j\ne 0$,
then, taking into account that
$\lambda\sb j\to 2m\jj$
as $\omega_j\to m$,
we conclude that
the relation \eqref{linear-b-afa-inv-away} yields
the following inequality:
\begin{eqnarray}\label{linear-b-psi-psi-psi-psi-v}
\nonumber
&&
\hskip -20pt
\norm{u_\kappa^\kappa\pi\sb{P}\sp{-}\bmPsi\sb j}
+
\epsilon_j\norm{u_\kappa^\kappa\pi\sb{A}\sp{-}\bmPsi\sb j}
\le
C
\epsilon_j^2\norm{
\frac{1}{u_\kappa^\kappa}
\pi\sb{P}\sp{-}\eubV\bmPsi\sb j}
+
C\epsilon_j\bignorm{\frac{1}{u_\kappa^\kappa}
\big(\pi\sb{A}\sp{-}\eubV\bmPsi\sb j
-
u_\kappa^{2\kappa}\pi\sb{A}\sp{-}\bmPsi\sb j\big)}
\\
&&
\hskip -20pt
\le
C
\epsilon_j^2\bignorm{\frac{1}{u_\kappa^\kappa}\pi\sb{P}\sp{-}\eubV\bmPsi\sb j}
+
C\epsilon_j
\Bignorm{
\frac{1}{u_\kappa^\kappa}
\pi\sb{A}\sp{-}\eubV\pi\sb{P}\sp{-}\bmPsi\sb j
+
\frac{1}{u_\kappa^\kappa}
\big(\pi\sb{A}\sp{-}(\eubV-u_\kappa^{2\kappa})\pi\sb{A}\sp{-}\bmPsi\sb j\big)},
\end{eqnarray}
with $C=C(Z_0)>0$,
and then,
using the bounds
\[
\norm{\eubV}_{\End(\C^{2N})}
\le C\abs{u_\kappa(y)}^{2\kappa},
\qquad
\norm{\pi\sb{A}\circ\eubV\circ\pi\sb{P}}_{\End(\C^{2N})}
\le C\epsilon\abs{u_\kappa(y)}^{2\kappa},
\]
and
\[
\norm{\pi\sb{A}\sp{-}(\eubV+u_\kappa^{2\kappa}
(1+2\kappa\Pi_\bmXi)\bmupbeta)\pi\sb{A}\sp{-}}_{\End(\C^{2N})}
\le C\epsilon^{2\varkappa}\abs{u_\kappa(y)}^{2\kappa}
\]
from Lemma~\ref{linear-b-lemma-w}
and taking into account the identity
\[
\pi\sb{A}\sp{-}(\eubV-u_\kappa^{2\kappa})\pi\sb{A}\sp{-}
=
\pi\sb{A}\sp{-}(\eubV+u_\kappa^{2\kappa}
(1+2\kappa\Pi_\bmXi)\bmupbeta)\pi\sb{A}\sp{-},
\]
we obtain the estimate
\begin{equation}\label{linear-b-z-minus-small-away-v}
\norm{u_\kappa^\kappa\pi\sp{-}\bmPsi\sb j}
\le
C\epsilon_j^{\min(1,2\varkappa)}\norm{u_\kappa^\kappa\bmPsi\sb j},
\qquad
\forall j\in\N.
\end{equation}
The inequality \eqref{linear-b-z-minus-small-away-v}
and Lemma~\ref{linear-b-lemma-psi-a-m}
lead to
$\norm{u_\kappa^\kappa\bmPsi\sb j}
=O\big(\epsilon_j^{\min(1,2\varkappa)}\big)
\norm{u_\kappa^\kappa\bmPsi\sb j}$,
in contradiction to $\bmPsi\sb j\not\equiv 0$, $j\in\N$.
Thus, the assumption
$Z_0\in\C\setminus\sigma(\eurl\sb{-})$
leads to a contradiction,
and so does the assumption
$Z_0\in(1/(2m),+\infty)$,
and so does the assumption
$Z_0=1/(2m)$
when the threshold
$z=1/(2m)$
is a regular point of the essential spectrum of $\eurl\sb{-}$.
\end{proof}

This finishes the proof of
Theorem~\ref{linear-b-theorem-2m}~\itref{linear-b-theorem-2m-1}
and~\itref{linear-b-theorem-2m-2}.

\subsection{Characteristic roots of the nonlinear eigenvalue problem}
\label{linear-b-sect-nonlinear}

Let us prove
Theorem~\ref{linear-b-theorem-2m}~\itref{linear-b-theorem-2m-1a},
showing that $Z_0=0$
is only possible when $\lambda_j=2\omega_j\jj$
for all but finitely many $j\in\N$.
First, we claim that the relations
\eqref{linear-b-4p-plus} and \eqref{linear-b-4a-plus}
allow one to express
$\eubY:=\pi\sp{+}\bmPsi\sb j$
in terms of
$\eubX:=\pi\sp{-}\bmPsi\sb j$.

\begin{lemma}\label{linear-b-lemma-zp-zm}
There is
$\upepsilon_2\in(0,\upepsilon_1)$
such that for any $\epsilon\in(0,\upepsilon_2)$
and any $z\in\mathbb{D}\sb 1$
the relations
\begin{eqnarray}
\nonumber
\epsilon\eubD_0\pi\sb{A}\eubY
-(\omega-\jj\lambda-m)\pi\sb{P}\eubY
+
\epsilon^2\pi\sp{+}\sb{P}\eubV(\eubX+\eubY)
=0,
\\
\nonumber
\epsilon\eubD_0\pi\sb{P}\eubY
-(\omega-\jj\lambda+m)\pi\sb{A}\eubY
+\epsilon^2\pi\sp{+}\sb{A}\eubV(\eubX+\eubY)
=0,
\end{eqnarray}
where $\omega=\sqrt{m^2-\epsilon^2}$
and
\begin{eqnarray}\label{linear-b-def-mu}
\lambda=\lambda(z)=(2\omega+\epsilon^2 z)\jj
\end{eqnarray}
(cf. \eqref{linear-b-def-Lambda-j-0}),
define a linear map
$
\vartheta(\cdot,\epsilon,z):
L^{2,-\kappa}(\R^n,\range(\pi\sp{-}))
\to
L^{2,-\kappa}(\R^n,\range(\pi\sp{+}))
,
$
$
\ \vartheta(\cdot,\epsilon,z):\,\eubX\mapsto\eubY,
$
which is analytic in $z$,
where for $\mu\in\R$
the exponentially weighted spaces are defined by
\[
L^{2,\mu}(\R^n)
=\big\{u\in L^2_{\mathrm{loc}}(\R^n);\;
e^{\mu\langle r\rangle}u\in L^2(\R^n)
\big\},
\qquad
\norm{u}_{L^{2,\mu}}
:=\norm{e^{\mu\langle r\rangle}u}_{L^2}.
\]
Moreover,
there is $C>0$
such that
\begin{eqnarray}
\nonumber
\norm{\vartheta(\cdot,\epsilon,z)}
\sb{L^{2,-\kappa}(\R^n,\C^{2N})
\to L^{2,-\kappa}(\R^n,\C^{2N})}
\le C\epsilon,
\qquad
\epsilon\in(0,\upepsilon_2),
\quad
z\in\mathbb{D}\sb 1,
\\
\nonumber
\norm{\p\sb z\vartheta(\cdot,\epsilon,z)}
\sb{L^{2,-\kappa}(\R^n,\C^{2N})\to L^{2,-\kappa}(\R^n,\C^{2N})}
\le C\epsilon^3,
\qquad
\epsilon\in(0,\upepsilon_2),
\quad
z\in\mathbb{D}\sb 1.
\end{eqnarray}
\end{lemma}

\begin{proof}
By \eqref{linear-b-delta-tilde-z},
\[
\begin{bmatrix}
\pi\sb{P}\eubY
\\
\pi\sb{A}\eubY
\end{bmatrix}
=
\begin{bmatrix}
\omega-\jj\lambda+m&\epsilon\eubD_0
\\
\epsilon\eubD_0&\omega-\jj\lambda-m
\end{bmatrix}
\left(
\Delta_y
+
\frac{(\omega-\jj\lambda)^2-m^2}{\epsilon^2} I
\right)^{-1}
\begin{bmatrix}
\pi\sp{+}\sb{P}\eubV(\eubX+\eubY)
\\
\pi\sp{+}\sb{A}\eubV(\eubX+\eubY)
\end{bmatrix}.
\]
This leads to
\begin{equation}\label{linear-b-p-ap-x}
\eubY
=
\pi\sp{+}
\left\{
(\omega-\jj\lambda)
+
m
(\pi\sb{P}-\pi\sb{A})
+
\epsilon\eubD_0
\right\}
\Big(\Delta_y+\frac{(\omega-\jj\lambda)^2-m^2}{\epsilon^2}I\Big)^{-1}
\eubV(\eubX+\eubY).
\end{equation}
For $\epsilon>0$ and $z\in\C$,
$\Im z<0$
(so that $\Re\lambda(z)>0$ by \eqref{linear-b-def-mu}),
we define the linear map
\begin{eqnarray}\label{linear-b-def-Phi}
&
\Phi(\cdot,\epsilon,z):\;L^{2,-\kappa}(\R^n,\C^{2N})
\to L^{2,-\kappa}(\R^n,\range(\pi\sp{+})),
\\
\nonumber
&
\Phi(\bmPsi,\epsilon,z)
=
\pi\sp{+}
\left\{
(\omega-\jj\lambda)
+
m
(\pi\sb{P}-\pi\sb{A})
+
\epsilon\eubD_0
\right\}
\Big(\Delta_y+\frac{(\omega-\jj\lambda)^2-m^2}{\epsilon^2}I\Big)^{-1}
\eubV\bmPsi,
\end{eqnarray}
where $\lambda=\lambda(z)=(2\omega+\epsilon^2 z)\jj$.
Using the definition
\eqref{linear-b-def-Phi},
the relation \eqref{linear-b-p-ap-x} takes the form
\begin{eqnarray}\label{linear-b-ttf}
\eubY=\Phi(\eubX+\eubY,\epsilon,z).
\end{eqnarray}
Since the norm of
$
\Phi(\cdot,\epsilon,z)
\in
\mathscr{B}\big(
L^{2,-\kappa}(\R^n,\C^{2N}),L^{2,-\kappa}(\R^n,\C^{2N})
\big),
$
$
z\in\mathbb{D}\sb 1,
$
is small
(as long as $\epsilon>0$ is small enough),
we will be able to use the above relation
to express $\eubY$ as a function of
$\eubX\in L^{2,-\kappa}(\R^n,\range(\pi\sp{-}))$.

\begin{remark}
The
inverse of
$
\Delta_y+\frac{(\omega-\jj\lambda)^2-m^2}{\epsilon^2}
$
is not continuous
in $\lambda$
at $\Re\lambda=0$;
this discontinuity
could result in two different families of
eigenvalues bifurcating from an embedded eigenvalue
even when its algebraic multiplicity is one.
We will use the resolvent
corresponding to $\Re\lambda>0$ and then
use its analytic continuation
through $\Re\lambda=0$.
\end{remark}

In view of Proposition~\ref{linear-b-prop-rauch-prop3},
it is convenient to change the variables
so that in \eqref{linear-b-def-Phi}
we deal with $(-\Delta_y-\zeta^2)$;
let $\zeta\in\C$
be defined by
\begin{eqnarray}\label{linear-b-def-zeta}
\zeta^2={((\omega-\jj\lambda)^2-m^2)}/{\epsilon^2},
\qquad
\Re\zeta\ge 0;
\end{eqnarray}
the values of $\lambda$ with $\Re\lambda>0$ correspond to
the values of $\zeta$ with $\Im\zeta<0$
(since
$\lambda\in \mathbb{D}_{\upepsilon_2}(2\omega\jj)$
and $\omega\in(\upomega_2,m)$,
where
$\upomega_2=\sqrt{m^2-\upepsilon_2^2}$,
with $\upepsilon_2\in(0,\upepsilon_1)$ small enough).

Due to Lemma~\ref{linear-b-lemma-w},
$\norm{\eubV(y,\epsilon)}\sb{\End(\C^{2N})}\le C e^{-2\kappa\abs{y}}$;
using the analytic continuation
of the resolvent from Proposition~\ref{linear-b-prop-rauch-prop3},
the mapping \eqref{linear-b-def-Phi}
could be extended
from $\{\Im\zeta<0\}$
to
$
\{\zeta\in\C;\, \Im\zeta<\kappa\}
\setminus\overline{\jj\R_{+}}.
$
For the uniformity,
we require that
\begin{eqnarray}\label{linear-b-condition-on}
\abs{\Im\zeta}
<\kappa,
\end{eqnarray}
considering the resolvent
$(-\Delta_y-\zeta^2)^{-1}$
for $\Im\zeta<0$
(this corresponds to $\Re\lambda>0$)
and its analytic continuation
into the strip $0\le\Im\zeta<\kappa$
(this corresponds to $\Re\lambda\le 0$).
Due to our assumptions that
$\omega\to m$ and $\lambda\to 2 m \jj$,
one has
\begin{eqnarray}\label{linear-b-re-im}
\Re((\omega-\jj\lambda)^2-m^2)
=8m^2+O(\Re\lambda)+O(\Im\lambda-2m)+O(\epsilon^2).
\end{eqnarray}
Therefore, by \eqref{linear-b-def-zeta},
\begin{eqnarray}\label{linear-b-re-zeta-large}
\Re\zeta
=
\epsilon^{-1}
\Re\sqrt{(\omega-\jj\lambda)^2-m^2}
=O(\epsilon^{-1}),
\end{eqnarray}
showing that
for $\epsilon$ sufficiently small
one has $\zeta\in\C\setminus\mathbb{D}\sb 1$
(we take $\upepsilon_2>0$ smaller if necessary).
Since we only consider $z\in\mathbb{D}\sb 1$,
the relation \eqref{linear-b-def-mu}
yields
$\abs{\Re\lambda}\le\abs{z}\epsilon^2\le\epsilon^2$,
and then
\eqref{linear-b-def-zeta}
and \eqref{linear-b-re-zeta-large}
lead to
\[
\Im\zeta
=\frac{1}{2\Re\zeta}\Im\zeta^2
=O(\epsilon)
\frac{\Im((\omega-\jj\lambda)^2-m^2)}{\epsilon^2}
=O(\epsilon)
\frac{O(\Re\lambda)}{\epsilon^2}
=O(\epsilon),
\]
showing that the condition \eqref{linear-b-condition-on}
holds true for $\epsilon$ sufficiently small,
satisfying assumptions of Proposition~\ref{linear-b-prop-rauch-prop3}.
Then, by Proposition~\ref{linear-b-prop-rauch-prop3},
there is $C>0$ such that
for any $\bmPsi\in L^{2,-\kappa}(\R^n,\C^{2N})$
the map \eqref{linear-b-def-Phi} satisfies
\begin{equation}\label{linear-b-phi-x}
\norm{\Phi(\bmPsi,\epsilon,z))}\sb{L^{2,-\kappa}(\R^n,\C^{2N})}
\le C\epsilon\norm{\bmPsi}\sb{L^{2,-\kappa}(\R^n,\C^{2N})},
\quad
\epsilon\in(0,\upepsilon_2),
\quad
z\in\mathbb{D}\sb 1.
\end{equation}
We take $\upepsilon_2$ smaller if necessary so that
$\upepsilon_2\le 1/(2C)$ (with $C>0$ from \eqref{linear-b-phi-x});
then the linear map
\[
I-\Phi(\cdot,\epsilon,z):\,\eubY\mapsto\eubY-\Phi(\eubY,\epsilon,z)
\]
is invertible, with
\begin{eqnarray}\label{linear-b-y-smaller-x}
\norm{(I-\Phi(\cdot,\epsilon,z))^{-1}}\sb{L^{2,-\kappa}\to L^{2,-\kappa}}
\le 2,
\qquad
\epsilon\in(0,\upepsilon_2),
\qquad
z\in\mathbb{D}\sb 1.
\end{eqnarray}
Since $\Phi(\cdot,\epsilon,z)$ is linear,
writing \eqref{linear-b-ttf} in the form
$\eubY-\Phi(\eubY)=\Phi(\eubX)$,
we can express $\eubY=(I-\Phi)^{-1}\Phi(\eubX)$.
Thus, for each
$\epsilon\in(0,\upepsilon_2)$
and $z\in\mathbb{D}_1$,
we may define the mapping
$(\eubX,\epsilon,z)
\mapsto \eubY$,
which we denote by $\vartheta$:
\begin{eqnarray}\label{linear-b-def-vartheta}
&&
\vartheta(\cdot,\epsilon,z):\;
L^{2,-\kappa}(\R^n,\range(\pi\sp{-}))
\to
L^{2,-\kappa}(\R^n,\range(\pi\sp{+})),
\nonumber
\\[1ex]
&&
\vartheta(\cdot,\epsilon,z):\;
\eubX\mapsto\eubY=(I-\Phi(\cdot,\epsilon,z))^{-1}\Phi(\eubX,\epsilon,z).
\end{eqnarray}
By \eqref{linear-b-phi-x} and \eqref{linear-b-y-smaller-x},
one has
$
\norm{\vartheta(\cdot,\epsilon,z)}\sb{L^{2,-\kappa}\to L^{2,-\kappa}}
\le 2C\epsilon,
$
for
$\epsilon\in(0,\upepsilon_2)$
and $z\in\mathbb{D}\sb 1$.

Finally, let us discuss the differentiability
of $\vartheta$ with respect to $z$.
The map $\Phi$
can be differentiated in the strong sense
with respect to $z$.
First, we notice that, by
\eqref{linear-b-def-zeta} and then by \eqref{linear-b-def-mu},
\begin{eqnarray}\label{linear-b-d-zeta-d-z}
2\abs{\zeta \p_z\zeta}
=\Abs{
\p_z
\frac{(\omega-\jj\lambda)^2-m^2
}{\epsilon^2}
}
=
\Abs{
\frac{2(\omega-\jj\lambda)}{\epsilon^2}
\p_z(2\omega+\epsilon^2 z)}
=
2\abs{\omega-\jj\lambda},
\end{eqnarray}
with the right-hand side bounded uniformly in
$\epsilon\in(0,\upepsilon_2)$ and $z\in\mathbb{D}\sb 1$.
Therefore, using the bound for the derivative
of the analytic continuation of the resolvent
(cf. Proposition~\ref{linear-b-prop-rauch-prop3},
which we apply with $\nu=0$ and also with $\nu=1$ to accommodate
the operator $\epsilon\eubD_0$ from the definition of $\Phi$
in \eqref{linear-b-def-Phi}),
we conclude that there is $C>0$ such that
\[
\norm{\p\sb z\Phi(\cdot,\epsilon,z)}\sb{L^{2,-\kappa}\to L^{2,-\kappa}}
\le
\norm{\p\sb\zeta\Phi}\sb{L^{2,-\kappa}\to L^{2,-\kappa}}
\abs{\p\sb z\zeta}
\le
\frac{C}{\langle\zeta\rangle^2}
\frac{\abs{\omega-\jj\lambda}}{\abs{\zeta}}
\le
C\epsilon^3
\]
for all
$
\epsilon\in(0,\upepsilon_2)
$
and
$
z\in\mathbb{D}\sb 1.
$
Above,
$\p\sb z$ is considered as a gradient in $\R^2\cong\C$;
we used \eqref{linear-b-d-zeta-d-z}
and the estimate \eqref{linear-b-re-zeta-large}.
Then it follows from \eqref{linear-b-def-vartheta} that
there is $C>0$ such that one also has
\[
\norm{\p\sb z\vartheta(\cdot,\epsilon,z)}\sb{L^{2,-\kappa}\to L^{2,-\kappa}}
\le
C\epsilon^3,
\]
for all
$
\epsilon\in(0,\upepsilon_2)
$
and
$
z\in\mathbb{D}\sb 1.
$
One can see from
\eqref{linear-b-def-Phi}
that $\Phi$ is analytic in the complex parameter $z$,
hence so is $\vartheta$.
\end{proof}

As long as $j\in\N$ is sufficiently large
so that $\epsilon_j\in(0,\upepsilon_2)$ and $z\sb j\in\mathbb{D}\sb 1$,
then, by Lemma~\ref{linear-b-lemma-zp-zm},
the relations \eqref{linear-b-4p-plus} and \eqref{linear-b-4a-plus}
allow us to express
\[
\pi\sp{+}\bmPsi\sb j
=\vartheta(\pi\sp{-}\bmPsi\sb j,\epsilon_j,z\sb j),
\]
with $\vartheta(\cdot,\epsilon,z)$
a linear map from $L^{2,-\kappa}(\R^n,\C^{2N})$
into itself,
with
\[
\norm{\vartheta(\cdot,\epsilon,z)}
\sb{L^{2,-\kappa}(\R^n,\C^{2N})\to L^{2,-\kappa}(\R^n,\C^{2N})}
\le C\epsilon.
\]
Now \eqref{linear-b-4p-minus} and \eqref{linear-b-4a-minus}
can be written as
\begin{eqnarray}
\label{linear-b-4p-minus-1}
&&
\epsilon_j\eubD_0\pi\sp{-}\sb{A}\bmPsi\sb j
+(m-\omega_j-\jj\lambda\sb j)\pi\sp{-}\sb{P}\bmPsi\sb j
+\epsilon_j^2\pi\sp{-}\sb{P}
\eubV\sp{\vartheta}\pi\sp{-}\bmPsi\sb j
=0,
\\
\label{linear-b-4a-minus-1}
&&
\epsilon_j\eubD_0\pi\sp{-}\sb{P}\bmPsi\sb j
-(m+\omega_j+\jj\lambda\sb j)\pi\sp{-}\sb{A}\bmPsi\sb j
+\epsilon_j^2\pi\sp{-}\sb{P}
\eubV\sp{\vartheta}\pi\sp{-}\bmPsi\sb j
=0,
\end{eqnarray}
where
$\eubV\sp\vartheta=\eubV\sp\vartheta(y,\epsilon_j,\lambda\sb j)$,
with
$
\eubV\sp{\vartheta}(y,\epsilon,z)
:=\eubV(y,\epsilon)\circ(1+\vartheta(\cdot,\epsilon,z))
$
satisfying
\[
\norm{\eubV\sp{\vartheta}(\epsilon,\lambda)}\sb{L^2\to L^2}
\le
\norm{\eubV(\epsilon)}\sb{L^{2,-\kappa}\to L^2}
\big(
1+\norm{\vartheta(\cdot,\epsilon,z)}\sb{L^{2,-\kappa}\to L^{2,-\kappa}}
\big)
\le C,
\]
so that
\[
\eubV(\epsilon_j)\bmPsi\sb j
=\eubV(\epsilon_j)
(\pi\sp{-}\bmPsi\sb j+\pi\sp{+}\bmPsi\sb j)
=\eubV(\epsilon_j)
\big(
\pi\sp{-}\bmPsi\sb j+\vartheta(\pi\sp{-}\bmPsi\sb j,\epsilon_j,z\sb j)
\big)
=\eubV\sp{\vartheta}
\pi\sp{-}\bmPsi\sb j.
\]
We recall the definition
$Z_j
=-(2\omega_j+\jj\lambda_j)/\epsilon_j^2$
(cf. \eqref{linear-b-def-Lambda-j})
and rewrite
\eqref{linear-b-4p-minus-1}, \eqref{linear-b-4a-minus-1},
as the following system:
\begin{eqnarray}\label{linear-b-m-z-z}
\begin{bmatrix}
\pi\sp{-}\sb{P}
(\frac{m+\omega_j}{\epsilon_j^2}+Z_j+\eubV\sp{\vartheta})
\pi\sp{-}\sb{P}
&
\pi\sp{-}\sb{P}
(\epsilon_j^{-1}\eubD_0+\eubV\sp{\vartheta})
\pi\sp{-}\sb{A}
\\
\pi\sp{-}\sb{A}
(\epsilon_j^{-1}\eubD_0+\eubV\sp{\vartheta})
\pi\sp{-}\sb{P}
&
\pi\sp{-}\sb{A}
(
Z_j
-\frac{1}{m+\omega_j}
+\eubV\sp{\vartheta})
\pi\sp{-}\sb{A}
\end{bmatrix}
\begin{bmatrix}
\pi\sp{-}\sb{P}\bmPsi\sb j
\\
\pi\sp{-}\sb{A}\bmPsi\sb j
\end{bmatrix}
=0,
\end{eqnarray}
with $j\in\N$.
We rewrite the above as the
\emph{nonlinear eigenvalue problem}
\begin{eqnarray}\label{linear-b-def-M}
&
A(\epsilon,z)
\begin{bmatrix}
\pi\sb{P}\sp{-}\bmPsi
\\
\pi\sb{A}\sp{-}\bmPsi
\end{bmatrix}
=0,
\nonumber
\\
&
A(\epsilon,z)
:=
\begin{bmatrix}
\pi\sp{-}\sb{P}
(\frac{m+\omega}{\epsilon^2}+z+\eubV\sp{\vartheta})
\pi\sp{-}\sb{P}
&
\pi\sp{-}\sb{P}
(\epsilon^{-1}\eubD_0+\eubV\sp{\vartheta})
\pi\sp{-}\sb{A}
\\
\pi\sp{-}\sb{A}
(\epsilon^{-1}\eubD_0+\eubV\sp{\vartheta})
\pi\sp{-}\sb{P}
&
\pi\sp{-}\sb{A}
(
z
-\frac{1}{m+\omega}
+\eubV\sp{\vartheta})
\pi\sp{-}\sb{A}
\end{bmatrix},
\end{eqnarray}
where the operator
$A(\epsilon,z):\;
H^1(\R^n,\range(\pi\sp{-}))
\to
L^2(\R^n,\range(\pi\sp{-}))
$
is considered as
\[
A(\epsilon,z):\;
L^2\big(\R^n,\range(\pi\sb{P}\sp{-})\times \range(\pi\sb{A}\sp{-})\big)
\to
L^2\big(\R^n,\range(\pi\sb{P}\sp{-})\times\range(\pi\sb{A}\sp{-})\big),
\]
with domain
$\dom(A(\epsilon,z))
=H^1(\R^n,\range(\pi\sb{P}\sp{-})\times \range(\pi\sb{A}\sp{-}))$.
Note that the operator $A(\epsilon,z)$
depends on $z$
analytically via $\vartheta$
(cf. Lemma~\ref{linear-b-lemma-zp-zm}).
By Weyl's theorem,
\begin{eqnarray}\label{linear-b-zero-not-in}
\sigma\sb{\mathrm{ess}}(A(\epsilon,z))
=
\big(-\infty,-(m+\omega)^{-1}+z\big]
\cup
\big[(m-\omega)^{-1}+z,+\infty\big),
\end{eqnarray}
so that $0\not\in\sigma\sb{\mathrm{ess}}(A(\epsilon,z))$.
Thus, the values $Z_j$ defined in \eqref{linear-b-def-Lambda-j}
are such that
the kernel of $A(\epsilon_j,z)$ is nontrivial at $z=Z_j$;
such values of $z$
are called the \emph{characteristic roots}
(or, informally, \emph{nonlinear eigenvalues}) of $A(\epsilon,z)$.

\medskip
\noindent
{\bf Nonlinear eigenvalue problem.}
We will study the location
of characteristic roots of $A(\epsilon,z)$
using the theory developed by
M. Keldysh \cite{MR0041353,MR0300125};
see also \cite{MR0312306,MR0313856}.
Let $X$ be a Hilbert space
and $\Omega\subset\C$ be an open neighborhood.
We assume that the family of closed operators
$A(z):\,X\to X$, $z\in\Omega$,
is a \emph{holomorphic family of type (A)} \cite[Section VII-2]{MR0407617}:
that is, we assume that
operators have the same domain $\dom(A(z))=\dom$,
$\forall z\in\Omega$,
with $\dom$ dense in $X$,
and that for any $u\in \dom$
the vector-function $A(z)u$ is holomorphic in $\Omega$.
It follows that
for any $u,\,v\in\dom$, each $z_0\in\Omega$,
and each $\eta$ in the resolvent set of $A(z_0)$,
the function
$\langle u,(A(z)-\eta)^{-1}v\rangle$
is analytic in $z$ in an open neighborhood of $z_0$
and that
$A(z)$ is resolvent-continuous in $z$ (in the norm topology).


\begin{definition}
The point $z_0\in\Omega$
is said to be \emph{regular} for the operator-valued analytic function $A(z)$
if the operator $A(z_0)$ has
a bounded inverse.
If the equation $A(z_0)\varphi=0$
has a non-trivial solution $\varphi_0\in \dom$, then $z_0$ is
said to be a \emph{characteristic root} of $A$ and $\varphi_0$
an eigenvector of $A$ corresponding to $z_0$.
\end{definition}

\begin{assumption}\label{linear-b-ass-characteristic}
$z_0\in\Omega$ is normal characteristic root of $A(z)$.
\end{assumption}

This means that
$A(z)$ has a bounded inverse for $0<\abs{z-z_0}<R$,
with some $R>0$,
and $0\in\sigma\sb{\mathrm{d}}(A(z_0))$;
see \cite{MR0041353,MR0300125}.

Due to Assumption~\ref{linear-b-ass-characteristic},
there is $\delta>0$
such that
$\overline{\mathbb{D}_\delta}\cap\sigma(A(z_0))=\{0\}$.
Due to the resolvent continuity of $A$ in $z$,
there is an open neighborhood $U\subset\Omega$ of $z_0$
such that
$\p\mathbb{D}_\delta\subset\rho(A(z))$
for all $z\in U$.
Let
\[
P_{\delta,z}=-\frac{1}{2\pi\jj}\oint\sb{\abs{\eta}=\delta}(A(z)-\eta)^{-1}\,d\eta,
\qquad
z\in U.
\]
One has
$\rank P_{\delta,z}=\rank P_{\delta,z_0}<\infty$
(the last inequality due to the assumption
that $0\in\sigma\sb{\mathrm{d}}(A(z_0))$).

\begin{definition}
The multiplicity $\alpha\in\N$
of the characteristic root $z_0$ of $A(z)$
is the order of vanishing
of $\det A(z)\at{\range(P_{\delta,z})}$
at $z_0$.
\end{definition}

\begin{remark}
The above definition
does not depend on a particular choice of
$\delta>0$.
\end{remark}

\begin{lemma}
\label{linear-b-lemma-c-roots-simple-0}
Let $z_0$ be a normal characteristic root of $A(z)$
of multiplicity $\alpha\in\N$.
The geometric multiplicity of $0\in\sigma\sb{\mathrm{d}}(A(z_0))$
satisfies
$g\le \alpha$.
\end{lemma}

\begin{proof}
We denote $\nu=\dim\range(P_{\delta,z_0})<\infty$
and choose the basis
$\{\psi_i\}\sb{1\le i\le\nu}$ in $\range(P_{\delta,z_0})$
so that $\psi_i$, $1\le i\le g$,
are eigenvectors corresponding to
zero eigenvalue of $A(z_0)$.
Let $\mathbf{A}(z)$
be the matrix representation of $A(z)$
in the basis $\{P_{\delta,z}\psi_i\}\sb{1\le i\le\nu}$.
Then the first $g$ columns
of $\mathbf{A}(z)$ vanish at $z=z_0$,
hence $\det\mathbf{A}(z)=O((z-z_0)^{g})$.
\end{proof}

The sum of multiplicities of characteristic roots
is stable under perturbations
(cf. \cite[Theorem 2.1]{MR0313856}):

\begin{theorem}
\label{linear-b-theorem-c-roots-stability}
Let $A(\epsilon,z):\,\bfX\to \bfX$, $\epsilon\in\R$, $z\in\Omega$,
be a continuous family of operators with domains $\dom(A(\epsilon,z))=\dom$,
$\ \forall (\epsilon,z)\in\R\times\Omega$,
which is resolvent-continuous in $\epsilon$ and $z$ and analytic in $z\in\Omega$.
Let $z_0$ be a normal characteristic root of $A(0,z)$
of multiplicity $\alpha\in\N$.
There is an open neighborhood $\Omega\subset\C$ of $z_0$
such that
if $\epsilon\ge 0$ is sufficiently small,
then
the sum of multiplicities of all characteristic values of $A(\epsilon,z)$
inside $\Omega$
equals $\alpha$.
\end{theorem}


\begin{proof}
Fix $\delta>0$ sufficiently small  so that
$\sigma(A(0,z_0))\cap\overline{\mathbb{D}_\delta}=\{0\}$.
Due to the resolvent continuity,
there is $\epsilon_1>0$
and an open neighborhood $\Omega\subset\C$ of $z_0$
such that for all $\epsilon\in[0,\epsilon_1]$ and all $z\in\Omega$
one has
$\sigma(A(\epsilon,z))\cap\overline{\mathbb{D}_\delta}=\{0\}$.
Let $\Gamma=\p\mathbb{D}_\delta$ and denote
\[
P_\Gamma(\epsilon,z)
=-\frac{1}{2\pi\jj}\oint\sb\Gamma
(A(\epsilon,z)-\eta I)^{-1}\,d\eta,
\qquad
\epsilon\in[0,\epsilon_1],
\quad
z\in\Omega.
\]
Due to the resolvent continuity of $A(\epsilon,z)$ in $\epsilon$ and $z$,
one has
\[
\rank P_\Gamma(\epsilon,z)
=
\rank P_\Gamma(0,z_0)=:\nu,
\qquad
\forall\epsilon\in[0,1],
\quad
\forall z\in\Omega.
\]
Let $\big(\psi_i\big)\sb{1\le i\le\nu}$ be the basis in
$\range(P_\Gamma(0,z_0))$.
Denote
\[
\psi_i(\epsilon,z)
=P_\Gamma(\epsilon,z)\psi_i,
\qquad
1\le i\le\nu,
\quad
\epsilon\in[0,1],
\quad
z\in \Omega;
\]
it is a basis in $\range(P_\Gamma(\epsilon,z))$
(we take $\Omega$ and $\epsilon_1>0$ smaller if necessary).
Let $ \mathbf{M}(\epsilon,z)$
be the matrix representation of
$A(\epsilon,z)$,
restricted onto $\range(P_\Gamma(\epsilon,z))$,
in this basis.
If $\epsilon\in[0,\epsilon_1]$
and
$z_j\in\Omega$ is a characteristic root of $A(\epsilon,z)$,
then its multiplicity equals the order of vanishing of $\det\mathbf{M}(\epsilon,z)$
at $z_j$.
Now the proof follows from Cauchy's argument principle
applied to $\det\mathbf{M}(\epsilon,z)$ in $\Omega$
when we take $\Omega$ and $\epsilon_1>0$ small enough
so that zeros of $\det\mathbf{M}(\epsilon,z)$ do not intersect $\p\Omega$.
\end{proof}

Now we apply the above theory to
the operator $A(\epsilon,z)$
defined in \eqref{linear-b-def-M};
first, we will do the reduction of $A$
using the Schur complement of its invertible block.
By \eqref{linear-b-zero-not-in},
we may assume that
there is a sufficiently small open neighborhood $U$ of $z=0$
and that $\omega\sb\ast\in(0,m)$ is sufficiently large
so that
\[
\sigma\sb{\mathrm{ess}}(A(\epsilon,z))\cap\mathbb{D}_{1/(4m)}
=\emptyset
\qquad
\forall\omega\in(\omega\sb\ast,m),
\quad
\forall\epsilon\in(0,\epsilon\sb\ast),
\quad
\forall
z\in U,
\]
where
$\epsilon\sb\ast=\sqrt{m^2-\omega\sb\ast^2}$.
Since $0\in\sigma\sb{\mathrm{d}}(\eurl\sb{-})$,
with
$\eurl\sb{-}$ from \eqref{linear-b-def-l-small-pm},
we may assume that the open neighborhood $U\ni\{0\}$
is small enough so that
\begin{eqnarray}\label{linear-b-U-small}
U\cap\sigma(\eurl\sb{-})=\{0\}.
\end{eqnarray}
Let
$A_{i j}(\epsilon,z)$,
$i,j=1,\,2$,
denote the operators
which are the entries of $A(\epsilon,z)$
in \eqref{linear-b-def-M}, so that
$
A(\epsilon,z)
=
\begin{bmatrix}
A_{11}(\epsilon,z)&A_{12}(\epsilon,z)
\\
A_{21}(\epsilon,z)&A_{22}(\epsilon,z)\end{bmatrix}.
$
We then have:
\begin{eqnarray}\label{linear-b-b-c-d-small-2w}
\begin{array}{c}
\norm{A_{12}}\sb{H^1\to L^2}
+\norm{A_{21}}\sb{H^1\to L^2}
+\norm{A_{21}}\sb{L^2\to H^{-1}}
=O(\epsilon^{-1}),
\\[2ex]
\norm{A_{11}}\sb{L^2\to L^2}=O(\epsilon^{-2}),
\end{array}
\end{eqnarray}
and, taking $\epsilon\sb\ast>0$ smaller if necessary,
one has
\begin{eqnarray}\label{linear-b-d-small-2w}
A_{11}^{-1}
=
\frac{\epsilon^2}{2m}+O\sb{L^2\to L^2}(\epsilon^4),
\qquad
\forall\epsilon\in(0,\epsilon\sb\ast),
\qquad
\forall z\in U,
\end{eqnarray}
with the estimate $O\sb{L^2\to L^2}(\epsilon^4)$
uniform in $z\in U$.
This suggests that we study the invertibility of $A(\epsilon,z)$
in terms of the Schur complement of $A_{11}(\epsilon,z)$,
which is defined by
\begin{eqnarray}\label{linear-b-def-s}
S(\epsilon,z)
=A_{22}-A_{21}A_{11}^{-1}A_{12}
:\;
H^1(\R^n,\range(\pi\sb{A}\sp{-}))
\to H^{-1}(\R^n,\range(\pi\sb{A}\sp{-})),
\end{eqnarray}
where $A_{11},\,A_{12},\,A_{21}$, and $A_{22}$
are evaluated at $(\epsilon,z)\in(0,\epsilon\sb\ast)\times U$.
Let us derive the explicit expression for $S(\epsilon,z)$:
\begin{eqnarray}
S(\epsilon,z)
\!\!\!\!&=&\!\!\!\!
\pi\sp{-}\sb{A}
\big(
z-\frac{1}{m+\omega}
+\eubV\sp\vartheta
\big)
\pi\sp{-}\sb{A}
\nonumber
\\
&&
-
\pi\sp{-}\sb{A}
(\eubD_0+\epsilon\eubV\sp\vartheta)
\pi\sp{-}\sb{P}
\big(
m+\omega+\epsilon^2(\eubV\sp\vartheta+z)
\big)^{-1}
\pi\sp{-}\sb{P}
(\eubD_0+\epsilon\eubV\sp\vartheta)
\pi\sp{-}\sb{A}
\nonumber
\\
\!\!\!\!&=&\!\!\!\!
\pi\sp{-}\sb{A}
\big(
z
-\frac1{m+\omega}
+u_\kappa^{2\kappa}
+\frac{\Delta_y}{m+\omega}
\big)
\pi\sp{-}\sb{A}
+
\pi\sp{-}\sb{A}
\big(
\eubV\sp\vartheta-u_\kappa^{2\kappa}
\big)
\pi\sp{-}\sb{A}
\nonumber
\\
&&
\!\!\!\!
+\pi\sp{-}\sb{A}
\frac{\eubD_0^2}{m+\omega}
\pi\sp{-}\sb{A}
-
\pi\sp{-}\sb{A}
(\eubD_0+\epsilon\eubV\sp\vartheta)
\pi\sp{-}\sb{P}
\frac{1}{m+\omega}
\pi\sp{-}\sb{P}
(\eubD_0+\epsilon\eubV\sp\vartheta)
\pi\sp{-}\sb{A}
\nonumber
\\
&&
\!\!\!\!
-
\pi\sp{-}\sb{A}
(\eubD_0+\epsilon\eubV\sp\vartheta)
\pi\sp{-}\sb{P}
\frac{1}{m+\omega}
\Big(
\Big(
1+
\frac{\epsilon^2(\eubV\sp\vartheta+z)}{m+\omega}
\Big)^{-1}
\!-1
\Big)
\pi\sp{-}\sb{P}
(\eubD_0+\epsilon\eubV\sp\vartheta)
\pi\sp{-}\sb{A}.
\nonumber
\end{eqnarray}
Above, $u_\kappa=u_\kappa(x)$ is the ground state
of the nonlinear Schr\"odinger equation \eqref{dirac-existence-def-uu}.
We note that, by
\eqref{linear-b-def-eub-v},
\[
\pi\sp{-}\sb{A}
\eubV\sp\vartheta
\pi\sp{-}\sb{A}
=
\pi\sp{-}\sb{A}
\eubV\circ(1+\vartheta)
\pi\sp{-}\sb{A}
=
\pi\sp{-}\sb{A}u_\kappa^{2\kappa}
+
O_{L^2\to L^2}(\epsilon);
\]
we used the bounds
$\norm{\pi\sb{A}\bmupphi\sb\omega}
\sb{L^\infty}=O(\epsilon^{1+\frac 1 \kappa})$
(cf. Theorem~\ref{dirac-existence-theorem-solitary-waves-c1})
and
$\norm{\vartheta}\sb{L^2_{-s}\to L^2_{-s}}
=O(\epsilon)$
(cf. Lemma~\ref{linear-b-lemma-zp-zm})
which yield
$\norm{\eubV\circ\vartheta}\sb{L^2\to L^2}
\le
\norm{\eubV}\sb{L^2_{-s}\to L^2_s}\norm{\vartheta}\sb{L^2_{-s}\to L^2_{-s}}
=O(\epsilon)$
for $s>1/2$.
Thus,
taking into account Lemma~\ref{linear-b-lemma-lm-inverse-h1},
the operator $S(\epsilon,z)$ defined in \eqref{linear-b-def-s}
takes the form
\begin{eqnarray}
\label{linear-b-s-z-eta}
S(\epsilon,z)
=
\pi\sp{-}\sb{A}
\Big(
z
-\frac{1}{2m}
+u_\kappa^{2\kappa}
+\frac{\Delta_y}{2m}
+
O\sb{H^1\to H^{-1}}(\epsilon)\Big)
\pi\sp{-}\sb{A},
\qquad
\epsilon\in[0,\epsilon\sb\ast),
\end{eqnarray}
with the estimate
$O\sb{H^{1}\to H^{-1}}(\epsilon)$
uniform in
$z\in U$.
Above, we extended $S(\epsilon,z)$
in \eqref{linear-b-s-z-eta}
from $\epsilon\in(0,\epsilon\sb\ast)$
to $\epsilon\in[0,\epsilon\sb\ast)$
by continuity.


The following lemma
allows us to reduce the problem
of studying the characteristic roots of $A(\epsilon,z)$
(see \eqref{linear-b-def-M})
to the characteristic roots of
$S(\epsilon,z)$ (cf. \eqref{linear-b-def-s}).

\begin{lemma}\label{linear-b-lemma-t-is-s}
If $\epsilon\sb\ast>0$ is sufficiently small,
then for all
$\epsilon\in[0,\epsilon\sb\ast)$
the point $z_0\in\mathbb{D}_{1/(2m)}$
is a characteristic root of $A(\epsilon,z)$
if and only if
it is a characteristic root of $S(\epsilon,z)$.
\end{lemma}

\begin{proof}
Since the operator $A_{11}(\epsilon,z):\,L^2(\R^n,\range(\pi\sb{P}\sp{-}))
\to L^2(\R^n,\range(\pi\sb{P}\sp{-}))$
is invertible for $\epsilon>0$ small enough
(see \eqref{linear-b-d-small-2w}),
the expression for the Schur complement \eqref{linear-b-Schur}
and the relation
\eqref{linear-b-t-3}
shows that the operator
$A(\epsilon,z)$
from \eqref{linear-b-def-M}
is invertible if and only if so is $S(\epsilon,z)$.
\end{proof}

\begin{lemma}\label{linear-b-lemma-m-ast}
The multiplicity of the characteristic root
$z_0=0$ of $S(0,z)$
is $\alpha=N/2$.
\end{lemma}

\begin{proof}
By \eqref{linear-b-s-z-eta},
$S(0,0)=(z-\eurl\sb{-})\pi\sb{A}\sp{-}$.
Since $\dim\ker(\eurl\sb{-})=1$,
one has
$\dim\ker(S(0,0))
=\rank\pi\sb{A}\sp{-}=N/2$.
Let $\bm{e}_i\in\C^{2N}$, $1\le i\le N/2$,
be the basis in $\range(\pi\sb{A}\sp{-})$;
then
$\big\{u_\kappa\bm{e}_i\big\}\sb{1\le i\le N/2}$
is the basis in $\ker(S(0,0))$.
We do not need to use the Riesz projectors
since the operator $S(0,z)$ is invariant in this space,
being represented by
$\mathbf{S}(0,z)=z I_{N/2}$;
thus,
$
\det\mathbf{S}(0,z)=z^{N/2}.
$
\end{proof}

\begin{lemma}\label{linear-b-lemma-characteristic-roots}
There is no sequence of characteristic roots
$Z_j\ne 0$ of $A(\epsilon_j,z)$
such that
$Z_j\to 0$ as $j\to\infty$.
\end{lemma}

\begin{proof}
Let $\delta>0$ be such that
$\p\mathbb{D}_\delta\subset\rho(A(\epsilon,z_0))$,
$z_0=0$.
Due to the continuity of the resolvent
in $z$ and $\epsilon$,
there is
$\epsilon\sb\ast\in(0,\upepsilon_2)$
and an open neighborhood
$U\subset\mathbb{D}\sb{1/(2m)}$,
$z_0\in U$,
such that
$\p\mathbb{D}\sb{\delta}\subset\rho(A(\epsilon,z))$
for all $z\in U$
and $\epsilon\in[0,\epsilon\sb\ast)$.

By \eqref{linear-b-U-small} and Lemma~\ref{linear-b-lemma-m-ast},
the sum of multiplicities of the characteristic roots of $S(0,z)$
in $U$ equals $N/2$,
and by Theorem~\ref{linear-b-theorem-c-roots-stability}
the same is true for $S(\epsilon,z)$
for all $\epsilon\in[0,\epsilon\sb\ast)$.
At the same time,
by Lemma~\ref{linear-b-lemma-two-omega-n}
and
Lemma~\ref{linear-b-lemma-c-roots-simple-0},
$z=0$ is a characteristic root
of $S(\epsilon,z)$,
$\epsilon\in[0,\epsilon\sb\ast)$,
of multiplicity
\emph{at least} $\alpha=N/2$.
Hence, there can be no other, nonzero characteristic roots
$z\in U$
of $S(\epsilon,z)$
for any $\epsilon\in[0,\epsilon\sb\ast)$,
and, in particular,
given a sequence $\epsilon_j\to 0$,
there is no sequence of characteristic roots $Z_j$ of $S(\epsilon_j,z)$
such that $Z_j\ne 0$ for $j\in\N$, $Z_j\to 0$ as $j\to\infty$.
By Lemma~\ref{linear-b-lemma-t-is-s},
the same conclusion holds true for $A(\epsilon,z)$.
\end{proof}

By Lemma~\ref{linear-b-lemma-characteristic-roots},
$Z_j=0$ for all but finitely many $j\in\N$.
By \eqref{linear-b-def-Lambda-j-0}, this implies that
$\lambda_j=2\omega_j\jj$
for all but finitely many $j\in\N$.
This concludes the proof of Theorem~\ref{linear-b-theorem-2m}~\itref{linear-b-theorem-2m-1a}.

The proof of
Theorem~\ref{linear-b-theorem-2m} is now complete.

\newpage

\appendix

\section{Appendix:
Analytic continuation of the free resolvent}
\label{linear-b-sect-analytic-continuation}

\ac{OLD VERSION:}
Let us remind the limiting absorption principle
for the free resolvent
\cite[Remark 2 in Appendix A]{MR0397194}
and
\cite[Theorem 8.1]{MR544248}.

\begin{lemma}[Limiting absorption principle for the Laplace operator]
\label{linear-b-lemma-lap-agmon-old}
Let $n\ge 1$.
For any
$k\in\N_0$,
\ac{Do we need $k\ne 0$ at all?}
$\nu\le 2+2k$,
\ac{Don't we also need $\nu\in\N_0$? Perhaps at least
$\nu\ge 0$?}
$s>1/2+k$,
and $\delta>0$,
there is $C=C(n,s,k,\nu,\delta)<\infty$
such that
\[
\norm{\p\sb z^k(-\Delta-z)^{-1}}
\sb{L^2\sb s(\R^n)\to H^\nu\sb{-s}(\R^n)}
\le C\abs{z}^{-(k+1-\nu)/2},
\qquad
z\in\C\setminus(\mathbb{D}\sb\delta\cup\R\sb{+}).
\]
\end{lemma}

\begin{proof}
For $\nu=0$,
the lemma rephrases \cite[Theorem 8.1]{MR544248}
(stated for $n=3$)
or \cite[Theorem A.1 and Remark 2 in Appendix A]{MR0397194}.
The recurrence based on the identities
\[
-\Delta(-\Delta-z)^{-1}=1+z(-\Delta-z)^{-1}
\quad\mbox{and}
\quad
\p\sb z^k(-\Delta-z)^{-1}=k!(-\Delta-z)^{-k-1},
\quad
k\in\N_0,
\]
provides all other cases.
\end{proof}

\ac{A new version for JFA:}

Let us remind the limiting  absorption principle
for the resolvent of the free Laplacian.

\begin{lemma}[Limiting absorption principle for the Laplace operator]
\label{linear-b-lemma-lap-agmon-old-better}
Let $n\ge 1$.
For any
$s>1/2$,
$\delta>0$,
and $\nu\in\N_0$, $\nu\le 2$,
there is $C=C(n,s,\delta,\nu)<\infty$
such that
\[
\norm{(-\Delta-z)^{-1}}
\sb{L^2\sb s(\R^n)\to H^\nu\sb{-s}(\R^n)}
\le C\abs{z}^{-(1-\nu)/2},
\qquad
\forall z\in\C\setminus(\mathbb{D}\sb\delta\cup\R\sb{+}).
\]
\end{lemma}

For the proof,
see \cite[Theorem A.1 and Remark 2 in Appendix A]{MR0397194}.

\ac{End of new version for JFA}

\ac{A new version for opus:}

We start with the following result:

\begin{lemma}
\label{linear-b-lemma-lap-agmon}
Let $n\ge 1$.
For any
$s>1/2$,
$\delta>0$,
and $\nu\in\N_0$, $\nu\le 2$,
there is $C=C(n,s,\delta,\nu)>0$
such that for any
$u\in H^2(\R^n)$
one has
\[
\norm{u}_{H^\nu\sb{-s}}
\le C\abs{z}^{-(1-\nu)/2}
\norm{(-\Delta-z)u}\sb{L^2\sb s},
\qquad
\forall z\in\C\setminus\mathbb{D}_\delta.
\]
\end{lemma}

This lemma follows from
\cite[Theorem A.1 and Remark 2 in Appendix A]{MR0397194}.
It also implies the limiting absorption principle
for the resolvent of the free Laplace operator
(see e.g. \cite[Theorem 8.1]{MR544248}),
showing that the mapping
\[
R^0(z)=(-\Delta-z)^{-1}:\,L^2_s(\R^n)\to L^2_{-s}(\R^n),
\qquad
s>1/2,
\qquad
z\in\C\setminus\overline{\R_{+}},
\]
is bounded uniformly in $z\in\C\setminus\overline{\R_{+}}$,
$\abs{z}\ge \delta$, for each $\delta>0$,
with the norm improving as $\abs{z}\to\infty$.
\ac{do we want -- ...and admits an extension to $\R_{+}$. ??}

\medskip


\ac{End of new version for opus}

Let $E_\mu:\, L^2(\R^n)\to L^2(\R^n)$
denote the operator of multiplication
by $e^{-\mu\langle r\rangle}$,
$\mu\in\R$.
Following \cite{MR0495958},
not to confuse the regularized resolvent
\[
R^0_\mu(\zeta^2):=E_\mu R^0(\zeta^2) E_\mu
=E_\mu(-\Delta-\zeta^2)^{-1}E_\mu,
\qquad
\zeta\in\C,
\quad
\Im\zeta>0
\]
with its analytic continuation
through the line $\Im\zeta=0$,
we will denote the latter by
$F_\mu^0(\zeta)$.

\ac{Below, do we need $k\ge 1$??
Again, $\nu\ge 0$?}

\begin{proposition}[Analytic continuation of the resolvent]
\label{linear-b-prop-rauch-prop3}
Let $n\ge 1$, $\mu>0$.
\begin{enumerate}
\item
There is an analytic function
$F_\mu^0(\zeta)$,
\[
F_\mu^0:\;
\{\Im\zeta>-\mu\}
\setminus(-\jj\overline{\R\sb{+}})
\longrightarrow
\mathscr{B}(L^2(\R^n),L^2(\R^n)),
\]
such that
$F_\mu^0(\zeta)=R_\mu^0(\zeta^2)$
for $\Im\zeta>0$,
and for any
$\delta>0$ and $\nu\in\N_0$, $\nu\le 2$,
there is
$C=C(n,\mu,\delta,\nu)<\infty$
such that
\begin{eqnarray}\label{linear-b-asdfasdf}
\norm{\p\sb\zeta^k F_\mu^0(\zeta)}\sb{L^2\to H^\nu}
\le \frac{C}{(1+\abs{\zeta})^{k+1-\nu}},
\qquad
\Im\zeta\ge-\mu+\delta,
\quad
\dist(\zeta,-\jj\R_{+})>\delta.
\end{eqnarray}
\item
  If $n$ is odd and satisfies
  $n\ge 3$, then \eqref{linear-b-asdfasdf}
  holds for all $\zeta\in\C$, $\Im\zeta\ge-\mu+\delta$.
\end{enumerate}
\end{proposition}

\begin{remark}
This result in dimension $n=3$ was stated and proved
in \cite[Proposition 3]{MR0495958},
as a consequence of the explicit expression
for the integral kernel of
$R_\mu^0(\zeta^2)$,
\[
-\frac{e^{-\mu\langle y\rangle}
e^{\jj\zeta\abs{y-x}}
e^{-\mu\langle x\rangle}
}{4\pi\abs{y-x}},
\qquad
\Im\zeta>0,
\qquad
x,\,y\in\R^3,
\]
which could be extended analytically
to the region $\Im\zeta>-\mu$
as a holomorphic function of $\zeta$ with values in
$L^2(\R^3\times \R^3)$.
In \cite{MR0495958},
the restriction on $\zeta$ was stronger:
$\Im\zeta>-\mu/2+\delta$ (with any $\delta>0$);
this was a pay-off for using an elegant argument
based on the Huygens principle.
(We note that our signs and inequalities
are often the opposite
to those of \cite{MR0495958}
since we consider the resolvent of $-\Delta$ instead of $\Delta$.)
\end{remark}

\begin{proof}
Let us define the analytic continuation
of $F_\mu^0(\zeta)$.
For $u,\,v\in L^2(\R^n)$
we define
$u\sb\mu$, $v\sb\mu\in L^{2,\mu}(\R^n)$
by
$u\sb\mu(x)=e^{-\mu \langle x\rangle}u(x)$,
$v\sb\mu(x)=e^{-\mu \langle x\rangle}v(x)$
and consider
\begin{eqnarray}\label{linear-b-ti-0}
 I(\zeta)
 =
 \langle v,F_\mu^0(\zeta) u\rangle
=
\int\sb{\R^n}\overline{\widehat{v\sb\mu}(\xi)}
\frac{1}{\xi^2-\zeta^2}\widehat{u\sb\mu}(\xi)
\,\frac{d^n\xi}{(2\pi)^n},
\end{eqnarray}
which is an analytic function in
$\zeta\in\C^{+}:=\{z\in\C:\;\Im z>0\}$.

Let us prove analyticity in
$\zeta$ for
$\Im\zeta>-\mu$,
$\Re\zeta>0$ (the case $\Re\zeta<0$ is considered similarly).
It is enough to prove that
for any $a>0$
and any $\delta>0$, $\delta\le a/3$,
$I(\zeta)$ extends analytically
into the rectangular neighborhood
\begin{eqnarray}\label{linear-b-def-k}
\mathcal{K}\sb{a}\sp{\delta}
=\left\{\zeta\in\C\sothat
a-\delta\le\Re\zeta\le a+\delta,
\
-\mu+\delta\le\Im\zeta\le 0
\right\}
\end{eqnarray}
(see Figure~\ref{linear-b-fig-eta-contour}),
satisfying there the bounds \eqref{linear-b-asdfasdf}
with constants $c\sb j$ independent of $a$.

\begin{figure}[htbp]
\begin{center}
\setlength{\unitlength}{1pt}
\begin{picture}(70,75)(65,-70)
\put(200,4){$\Re \lambda$}
\put(4,8){$\Im \lambda$}
\put(-7,-8){$0$}
\put(-24,-52){$\scriptstyle -\mu+\delta$}
\put(-14,-70){$\scriptstyle -\mu$}
\put(60,0){\circle*{2}}
\put(80,0){\circle*{2}}
\put(100,0){\circle*{2}}
\put(120,0){\circle*{2}}
\put(140,0){\circle*{2}}
\put(94,-23){$\mathcal{K}\sb{a}\sp{\delta}$}
\put(80,-50){\line(1,0){40}}
\put(80,0){\line(0,-1){50}}
\put(120,0){\line(0,-1){50}}
\put(57,-35){$\gamma_a$}
\put(42,-8){$\scriptstyle a-2\delta$}
\put(73,5){$\scriptstyle a-\delta$}
\put(100,5){$\scriptstyle a$}
\put(112,5){$\scriptstyle a+\delta$}
\put(140,-8){$\scriptstyle a+2\delta$}
\put(0,0){\vector(1,0){200}}
\put(0,-70){\vector(0,1){85}}
\multiput(-1,-50)(5,0){28}{\line(1,0){1}}
\multiput(-1,-60)(5,0){28}{\line(1,0){1}}
\multiput(-1,-70)(5,0){29}{\line(1,0){1}}
\multiput(70,0)(0,-5){15}{\line(0,1){1}}
\multiput(80,0)(0,-5){15}{\line(0,1){1}}
\multiput(120,0)(0,-5){15}{\line(0,1){1}}
\multiput(130,0)(0,-5){15}{\line(0,1){1}}
\multiput(140,0)(0,-5){15}{\line(0,1){1}}
\linethickness{1pt}
\qbezier( 60,0)( 66,3)( 70,-53)
\qbezier( 70,-53)(73,-60)(80,-60)
\qbezier(130,-53)(127,-60)(120,-60)
\qbezier(140,0)(134,3)(130,-53)
\put(80,-60){\line(1,0){40}}

\end{picture}
\caption{\footnotesize
The rectangular semi-neighborhood $\mathcal{K}\sb{a}\sp\delta$
around $a$
surrounded by the contour
 $\gamma_a=\left\{\lambda:\,\Im\lambda=g_a(\Re\lambda),
 \ a-2\delta
 \le\Re\lambda
 \le a+2\delta
 \right\}$;
$\ \mathop{\mathrm{dist}}
(\mathcal{K}\sb a\sp\delta,\gamma\sb a)
\ge\delta/2$.
}
\label{linear-b-fig-eta-contour}
\end{center}
\end{figure}

We pick $a>0$ and $\delta>0$, with $a\ge 3\delta$,
and break the integral \eqref{linear-b-ti-0} into two:
\begin{eqnarray}\label{linear-b-ti}
I(\zeta)
=
I_1^{(\delta)}(\zeta)+ I_2^{(\delta)}(\zeta)
=
\int\sb{\abs{\abs{\xi}-a}>2\delta}
\ +\ \int\sb{\abs{\abs{\xi}-a}<2\delta}.
\end{eqnarray}
The first integral
in \eqref{linear-b-ti}
is finite, being bounded by
\[
\abs{I_1^{(\delta)}(\zeta)}
\le
\int\limits\sb{\abs{\abs{\xi}-a}>2\delta}
\abs{\hat v\sb\mu(\xi)}\abs{\hat u\sb\mu(\xi)}
\frac{1}{2\abs{\zeta}}
\left|
\frac{1}{\abs{\xi}-\zeta}
-
\frac{1}{\abs{\xi}+\zeta}
\right|
\,\frac{d^n\xi}{(2\pi)^n}
\]
\[
\le
\int\limits\sb{\R^n}
\frac{\abs{\hat v\sb\mu(\xi)}\abs{\hat u\sb\mu(\xi)}}{2\abs{\zeta}}
\frac{2}{\delta}
\,\frac{d^n\xi}{(2\pi)^n}
\le
\frac{\norm{u\sb\mu}\norm{v\sb\mu}}
{\abs{\zeta}\delta},
\]
and therefore is analytic in $\zeta$
and is bounded by
$C/\abs{\zeta}$.
Above, to estimate the denominators,
we took into account that
for $\zeta\in\mathcal{K}\sb{a}\sp\delta$
and $\abs{\abs{\xi}-a}>2\delta$,
\[
\abs{\abs{\xi}\pm\zeta}
\ge\abs{(\abs{\xi}-a)+(a\pm\Re\zeta)}
\ge\abs{\abs{\xi}-a}-\abs{a\pm\Re\zeta}
>2\delta-\delta=\delta.
\]
To analyze the second integral
in \eqref{linear-b-ti},
we will deform the contour of integration in $\xi$.
Let $g_0\in C\sb{\mathrm{comp}}^\infty(\R)$
be even,
$g_0\le 0$,
$\supp g_0\in[-2\delta,2\delta]$,
with $g_0(0)=-\mu+\delta/2$
and monotonically increasing to zero as $\abs{t}\to 2\delta$.
Moreover,
we may assume that
$\abs{g_0'}<4\mu/\delta$
and that
$\mathop{\mathrm{dist}}(\gamma\sb 0,\mathcal{K}\sb 0\sp\delta)
\ge\delta/2$,
where
$\mathcal{K}\sb a\sp\delta$ is defined in \eqref{linear-b-def-k}
and
$\gamma\sb 0
=\{(\lambda,g_0(\lambda)):\;
\abs{\lambda}\le 2\delta\}$;
see Figure~\ref{linear-b-fig-eta-contour}.
Define $g_a(t)=g_0(t-a)$.

\begin{lemma}\label{linear-b-lemma-a-c}
Let $u\in L^{2,\mu}(\R^n)$,
so that
$\norm{u}\sb{L^{2,\mu}(\R^n)}
:=
\norm{e^{\mu\langle r\rangle }u}\sb{L^2(\R^n)}<\infty$.
Then its Fourier transform, $\hat u(\xi)$,
can be extended analytically into
the $\mu$-neighborhood of $\R^n\subset\C^n$,
which we denote by
\[
\Omega_\mu(\R^n)
=\{
\xi\in\C^n
\sothat
\abs{\Im\xi}<\mu
\}
\subset\C^n,
\]
and
there is $C_{\mu}>0$
such that
\begin{eqnarray}\label{linear-b-u-hat-u}
\norm{\hat u}\sb{L^2(\Omega_{\mu}(\R^n))}
\le C_{\mu}\norm{u}\sb{L^{2,\mu}(\R^n)},
\end{eqnarray}
where
$\Omega_\mu(\R^n)$
is interpreted as a region in
$\R^{2n}\cong\C^n$.
\end{lemma}

\begin{proof}
To prove this lemma,
one
defines $\hat{u}$ on $\Omega_\mu$ as the Fourier--Laplace transform of $u$
and then computes the $L^2$-norm for fixed $\Im\zeta$.
\end{proof}

By Lemma~\ref{linear-b-lemma-a-c},
the functions
$U(\xi)=\widehat{u\sb\mu}(\xi)$
and
$V(\xi)=\overline{\widehat{v\sb\mu}(\xi)}$
could be extended
analytically in $\xi\in\R^n$
into the strip
$\xi\in\C^n$, $\abs{\Im\xi}<\mu$.
We rewrite the second integral in
\eqref{linear-b-ti} in polar coordinates,
denoting $\lambda=\abs{\xi}\in[a-2\delta,a+2\delta]$,
$\bm\theta=\xi/\abs{\xi}\in\mathbb{S}^{n-1}$,
and then deform the contour
of integration in $\lambda$, arriving at
\begin{eqnarray}\label{linear-b-above-expression}
I_2^{(\delta)}(\zeta)
=
\int\sb{\gamma_a\times\mathbb{S}^{n-1}}
\frac{
V(\bm\theta\lambda)U(\bm\theta\lambda)
}{\lambda^2-\zeta^2}\lambda^{n-1}\,d\lambda
\frac{d\Omega\sb{\bm\theta}}{(2\pi)^n},
\end{eqnarray}
with $\gamma_a$ as on Figure~\ref{linear-b-fig-eta-contour}.
Clearly, \eqref{linear-b-above-expression} is analytic for
$\Re\zeta>0$ and $\Im\zeta>0$ (since
$\Im\lambda^2\le 0$ while $\Im\zeta^2>0$).

Let us argue that
\eqref{linear-b-above-expression} can also be
extended analytically into the box
$\mathcal{K}\sb{a}\sp\delta$.
For
$\lambda\in\gamma_a$ and
$\zeta\in\mathcal{K}\sb{a}\sp\delta$,
taking into account that
\[
\abs{\lambda-\zeta}
\ge\delta/2,
\qquad
\abs{\lambda+\zeta}
\ge
\Re\lambda+\Re\zeta
\ge
(a-2\delta)
+
(a-\delta)
=2a-3\delta
\ge a
\]
(recall that $\delta\le a/3$),
we see that \eqref{linear-b-above-expression}
defines an analytic function
which is bounded by
\begin{eqnarray}\label{linear-b-above-expression-2}
&&
\hskip -24pt
\abs{I_2^{(\delta)}(\zeta)}
\\
\nonumber
&&
\hskip -24pt
\le
\frac{2}{a\delta}
\left[
\int
\limits\sb{\gamma_a\times\mathbb{S}^{n-1}}
\hskip -12pt
\abs{V(\bm\theta\lambda)}^2
\abs{\lambda}^{n-1}\,\abs{d\lambda}
\frac{d\Omega\sb{\bm\theta}}{(2\pi)^n}
\int
\limits\sb{\gamma_a\times\mathbb{S}^{n-1}}
\hskip -12pt
\abs{U(\bm\theta\lambda)}^2
\abs{\lambda}^{n-1}\,\abs{d\lambda}
\frac{d\Omega\sb{\bm\theta}}{(2\pi)^n}
\right]^{\frac 1 2},
\end{eqnarray}
where the integration in $\lambda$ over the contour $\gamma_a$
could be parametrized by
\begin{eqnarray}\label{d-lambda}
\lambda(t)=t+\jj g_a(t),
\quad
t\in(a-2\delta,a+2\delta),
\quad
d\lambda=(1+\jj g_a'(t))\,dt,
\end{eqnarray}
so that
$\abs{d\lambda}$ is understood as
$(1+(g_a'(t))^2)^{1/2}\,dt$.
Our assumption that $a\ge 3\delta$
allows us to bound the first factor in \eqref{linear-b-above-expression-2}
by
$
\frac{2}{a\delta}
\le
\frac{2}{3\delta^2}.
$
Moreover,
if $\abs{\zeta}\ge 2(\mu+\delta)$,
the first factor in \eqref{linear-b-above-expression-2} is also bounded by
\[
\frac{2}{a\delta}
<\frac{2}{(\abs{\Re\zeta}-\delta)\delta}
<\frac{2}{(\abs{\zeta}-\mu-\delta)\delta}
<\frac{4}{\abs{\zeta}\delta},
\qquad
\forall\zeta\in
\mathcal{K}\sb{a}\sp\delta
\setminus \mathbb{D}\sb{2(\mu+\delta)}.
\]
Therefore, that factor
is bounded by $c/(1+\abs{\zeta})$
with certain $c=c(\mu,\delta)>0$.
To study the integrals in \eqref{linear-b-above-expression-2},
we define $\bm{G}:\,\R^n\to\R^n$ by
\begin{eqnarray}\label{def-bm-G}
\bm{G}(\eta)
=\frac{\eta}{\abs{\eta}}
g_a(\abs{\eta})
\rho(\abs{\eta}/\delta),
\qquad
\eta\in\R^n,
\end{eqnarray}
where $\rho\in C^\infty(\R)$
satisfies
$\rho(t)\equiv 1$ for $\abs{t}\ge 1$,
$\rho(t)\equiv 0$ for $\abs{t}\le 1/2$,
and parametrize $\xi$ as follows:
\[
\xi=\eta+\jj\bm{G}(\eta)\in\C^n,
\quad
\eta\in\R^n;
\qquad
\abs{\abs{\eta}-a}\le 2\delta.
\]
We have:
\[
\int\limits\sb{\gamma_a\times\mathbb{S}^{n-1}}
\!\!\!\!\!\!
\abs{U(\bm\theta\lambda)}^2\,
\abs{\lambda}^{n-1}\,\abs{d\lambda}
\frac{d\Omega\sb{\bm\theta}}{(2\pi)^n}
\le
\Big(1+\Big(\frac{4\mu}{\delta}\Big)^2\Big)^{\frac n 2}
\!\!\!\!\!\!
\int\limits\sb{\abs{\abs{\eta}-a}<2\delta}
\!\!\!\!\!\!\!\!
\abs{U(\eta+\jj\bm{G}(\eta))}^2
\,d^n\eta,
\]
where we took into account that
both $\abs{\lambda/\Re\lambda}$
and $\abs{d\lambda/\Re d\lambda}$
(which are understood with the aid of the parametrization \eqref{d-lambda}
as
$(t+g(t)^2)^{1/2}/t$ and $(1+(g_a'(t))^2)^{1/2}$,
respectively)
are bounded by
\[
\sqrt{1+(g_0')^2}\le\sqrt{1+(4\mu/\delta)^2}.
\]

One has:
\[
U(\eta+\jj\bm{G}(\eta))
=A_g u(\eta),
\]
where
\[
A_g u(\eta)
:=\int_{\R^n}
e^{-\jj x\cdot\eta}
e^{x\cdot\bm{G}(\eta)}
e^{-\mu\langle x\rangle}u(x)\,d^n x
\]
is an oscillatory integral operator
with the non-degenerate phase function
$\phi(x,\eta)=x\cdot\eta$
and a bounded smooth symbol
$a(x,\eta)=e^{x\cdot\bm{G}(\eta)-\mu\langle x\rangle}$.
Note that, by \eqref{def-bm-G},
\[
x\cdot\bm{G}(\eta)-\mu\langle x\rangle
\le
\abs{x}\abs{g_a(\abs{\eta})}-\mu\langle x\rangle
<0,
\qquad
\forall (x,\eta)\in\R^n\times\R^n.
\]
By the van der Corput-type arguments
applied to $A_g A_g\sp\ast$
\cite[Chapter IX]{MR1232192},
one can show that $A_g$ is continuous in $L^2(\R^n)$.
Then it follows that there is
$c=c(\mu,\delta)>0$
such that
\[
\int\limits\sb{\gamma_a\times\mathbb{S}^{n-1}}
\abs{U(\bm\theta\lambda)}^2
\abs{\lambda}^{n-1}\,\abs{d\lambda}
\frac{d\Omega\sb{\bm\theta}}{(2\pi)^n}
\le
c(\mu,\delta)\norm{u}^2.
\]
There is a similar bound for $V$.
Thus,
there is $C=C(\mu,\delta)>0$
such that
$
\abs{I_2^{(\delta)}(\zeta)}
 \le
 \frac{C(\mu,\delta)}{\abs{\zeta}\delta}
 \norm{v}\norm{u},
$
which is the desired bound.

The estimates on $\p\sb\zeta^j F_\mu^0(\zeta)$, $j\in\N$,
are proved similarly,
writing out the derivatives of $(\xi^2-\zeta^2)^{-1}$
and proceeding with the same decomposition
as in \eqref{linear-b-ti};
the only difference is the contribution from
higher powers of $\xi^2-\zeta^2$ in the denominator.

This settles the first part of Proposition~\ref{linear-b-prop-rauch-prop3}.

\medskip

Before we prove the second part of
Proposition~\ref{linear-b-prop-rauch-prop3},
we need the following technical lemma.

\begin{lemma}\label{linear-b-lemma-f-n}
Let $\rho>0$
and let $N\in\N$ be odd and satisfy $N\ge 3$.
The analytic function
\[
F_{N,\rho}(\zeta)
=
\int\sb{0}\sp{\rho}
\frac{\lambda^{N-1}\,d\lambda}{\lambda^2-\zeta^2},
\qquad
\zeta\in\C,
\qquad
\Im\zeta>0,
\]
extends analytically
into an open disc $\mathbb{D}_\rho$.
Moreover,
one has
\begin{eqnarray}\label{linear-b-bound-on-f}
\abs{F_{N,\rho}(\zeta)}
\le
\frac{\rho^{N-2}}{2}
\left(2+\ln N+\pi
+\ln\frac{\rho+\abs{\zeta}}{\rho-\abs{\zeta}}\right)
,
\qquad
\zeta\in\mathbb{D}_{\rho}.
\end{eqnarray}
\end{lemma}

\begin{proof}
Using the identity
$
\frac{\lambda^2}{\lambda^2-\zeta^2}
=
1+\frac{\zeta^2}{\lambda^2-\zeta^2}
$
(note that the denominator is nonzero
since $\lambda\ge 0$ and $\Im\zeta>0$)
and remembering that $N$ is odd,
we have:
\begin{eqnarray}\label{linear-b-f-n-alpha}
&&
F_{N,\rho}(\zeta)
=
\int\sb{0}\sp{\rho}
\frac{\lambda^{N-1}\,d\lambda}{\lambda^2-\zeta^2}
=
\int\sb{0}\sp{\rho}
\left(
\lambda^{N-3}
+
\zeta^2 \lambda^{N-5}
+
\dots
+
\zeta^{N-3}
+
\frac{\zeta^{N-1}}{\lambda^2-\zeta^2}
\right)\,d\lambda
\nonumber
\\
&&
\qquad
=
\frac{\rho^{N-2}}{N-2}
+
\frac{\zeta^2 \rho^{N-4}}{N-4}
+\dots
+
\zeta^{N-3}\rho
+
\frac{\zeta^{N-2}}{2}
\left[
\Ln\left(\frac{\rho-\zeta}{\rho+\zeta}\right)
+\pi\jj
\right].
\end{eqnarray}
Above, $\Ln$ denotes the analytic branch of the natural logarithm
on $\C\setminus\overline{\R\sb{-}}$
specified by $\Ln(1)=0$;
we also took into  account that, since $\Im\zeta>0$,
\[
\lim\sb{\lambda\to 0+}
\Ln\frac{\lambda-\zeta}{\lambda+\zeta}
=
\lim\sb{\lambda\to 0+}
\Ln\left(-1+\frac{2\lambda}{\zeta}\right)
=
\Ln(-1-0\jj)
=-\pi\jj.
\]
Due to the assumption $N\ge 3$,
the right-hand side of \eqref{linear-b-f-n-alpha}
extends to an analytic function of $\zeta$
as long as $\zeta\in\mathbb{D}_\rho$.
The bound \eqref{linear-b-bound-on-f} immediately follows
from the inequalities
\[
\abs{\zeta}<\rho,
\qquad
1+\frac{1}{3}+\frac{1}{5}+\dots+\frac{1}{N-2}
\le
1+\frac{1}{2}\sum_{j=2}^{N-2}\frac{1}{j}
\le
1+\frac 1 2\ln(N-2)
\]
(with the understanding that
the summation gives no contribution when $N=3$),
and the bound
\[
\Abs{\Ln\left(\frac{\rho-\zeta}{\rho+\zeta}\right)
+\pi\jj}
\le
\pi+
\ln\frac{\rho+\abs{\zeta}}{\rho-\abs{\zeta}}
\]
valid for
$\zeta\in\mathbb{D}_\rho\cap\C_{+}$
(when $\arg\frac{\rho-\zeta}{\rho+\zeta}\in(-\pi,0)$).
\end{proof}

\begin{remark}
 Note that the conclusion
 of the lemma
would not hold if $N$ were even:
in that case, one arrives
at functions which have a branching point
at $\zeta=0$; e.g.
\[
\int\sb{0}\sp{\rho}
\frac{\lambda\,d\lambda}{\lambda^2-\zeta^2}
=\frac{1}{2}\ln\left(1-\frac{\rho^2}{\zeta^2}\right),
\]
\[
\int\sb{0}\sp{\rho}
\frac{\lambda^3\,d\lambda}{\lambda^2-\zeta^2}
=
\int\sb{0}\sp{\rho}
\left(
\lambda+
\frac{\zeta^2 \lambda}{\lambda^2-\zeta^2}
\right)\,d\lambda
=
\frac{\rho^2}{2}
+
\frac{\zeta^2}{2}\ln\left(1-\frac{\rho^2}{\zeta^2}\right),
\]
which behave like
$\ln\left(-\frac{\rho}{\zeta}\right)$
and $\zeta^2\ln\left(-\frac{\rho}{\zeta}\right)$
when $\abs{\zeta}\ll\rho$
(hence have a branching point at $\zeta=0$).
\qedhere
\end{remark}

Now let us prove the second part of
Proposition~\ref{linear-b-prop-rauch-prop3};
from now on, we assume that $n$ is odd and satisfies $n\ge 3$.
It is enough to prove that
the function $I(\zeta)$ defined in \eqref{linear-b-ti-0}
is analytic inside the disc
$\mathbb{D}\sb{\mu}\subset\C$.

We pick $\rho\in(0,\mu)$
and break the integral \eqref{linear-b-ti-0} into two parts:
\begin{eqnarray}\label{linear-b-two-int}
I(\zeta)
=
 \int\sb{\R^n}
\frac{V(\xi)U(\xi)}{\xi^2-\zeta^2}\,d^n\xi
=
I_1^{(\rho)}(\zeta)+ I_2^{(\rho)}(\zeta),
\end{eqnarray}
where
\begin{eqnarray}
I_1^{(\rho)}(\zeta)
&:=&
\int\sb{\abs{\xi}\le\rho}
\frac{V(\xi)U(\xi)}{\xi^2-\zeta^2}\,d^n\xi,
\label{linear-b-two-int-1}
\\
I_2^{(\rho)}(\zeta)
&:=&
\int\sb{\abs{\xi}>\rho}
\frac{V(\xi)U(\xi)}{\xi^2-\zeta^2}\,d^n\xi.
\label{linear-b-two-int-2}
\end{eqnarray}
The function
$I_2^{(\rho)}(\zeta)$
in \eqref{linear-b-two-int-2}
is analytic in the disc $\zeta\in\mathbb{D}_\rho$,
and moreover
for any $r\in(0,\rho)$
one has
\[
\sup\sb{\zeta\in\mathbb{D}_r}
\abs{I_2^{(\rho)}(\zeta)}
\le
\sup\sb{\zeta\in\mathbb{D}_r}
\Abs{
\int\sb{\abs{\xi}>\rho}
\frac{V(\xi)U(\xi)}{\xi^2-\zeta^2}\,d^n\xi
}
\le
\frac{1}{\rho^2-r^2}
\norm{v}_{L^2}\norm{u}_{L^2}.
\]
Let us consider $I_1^{(\rho)}(\zeta)$
defined in \eqref{linear-b-two-int-1}.
Since both $V(\xi)$ and $U(\xi)$ are analytic
for $\xi\in\C^n$, $\abs{\xi}<\mu$,
we have the power series expansions
\[
V(\xi)
=\sum\sb{\alpha\in\N_{0}^n}
V_\alpha\xi^\alpha,
\qquad
U(\xi)
=\sum\sb{\alpha\in\N_{0}^n}
U_\alpha\xi^\alpha,
\qquad
V(\xi)U(\xi)
=\sum\sb{\alpha\in\N_{0}^n}
C_\alpha\xi^\alpha,
\]
which are absolutely convergent for $\abs{\xi}<\mu$.
Denote $\lambda=\abs{\xi}$,
$\bm\theta=\xi/\abs{\xi}\in\mathbb{S}^{n-1}$.
Then
\begin{eqnarray}\label{linear-b-i-1-alpha}
I_1\sp{(\rho)}(\zeta)
=
\int\limits\sb{\abs{\xi}\le\rho}
\frac{V(\xi)U(\xi)}{\xi^2-\zeta^2}\,d^n\xi
=
\sum\sb{\alpha\in\N_0^n}
C_\alpha
\int\limits\sb{\mathbb{S}^{n-1}}\bm\theta^\alpha\,d\Omega_{\bm\theta}
\int_0^\rho
\frac{\lambda^{\abs{\alpha}+n-1}\,d\lambda}{\lambda^2
-\zeta^2},
\end{eqnarray}
where $\bm\theta^\alpha=\theta_1^{\alpha_1}\dots\theta_n^{\alpha_n}$,
$\bm\theta=(\theta_1,\dots,\theta_n)\in\mathbb{S}^{n-1}$.
We note that, by parity considerations,
the terms corresponding to at least one $\alpha\sb j$
being odd are equal to zero,
hence
the summation in the right-hand side
is only over $\alpha\in(2\N_0)^n$.
We claim that the series \eqref{linear-b-i-1-alpha} defines an analytic function
in $\mathbb{D}_\rho$,
and moreover for each $r\in(0,\rho)$
there is $C>0$
such that
\[
\sup\sb{\zeta\in\mathbb{D}_r}
\abs{I_1\sp{(\rho)}(\zeta)}
\le
C\norm{v}_{L^2(\R^n)}\norm{u}_{L^2(\R^n)}.
\]
We have:
\begin{eqnarray}\label{linear-b-i-1-alpha-1}
I_1\sp{(\rho)}(\zeta)
&=&
\sum\sb{
\alpha\in(2\N_0)^n
}
C_\alpha
\int_{\mathbb{S}^{n-1}}\bm\theta^\alpha\,d\Omega_{\bm\theta}
F_{\abs{\alpha}+n,\rho}(\zeta
)
\nonumber
\\
&=&
\sum\sb{
\alpha\in(2\N_0)^n
}
C_\alpha
\int_{\mathbb{S}^{n-1}}\bm\theta^\alpha\,d\Omega_{\bm\theta}
R^{\abs{\alpha}}
\frac{F_{\abs{\alpha}+n,\rho}(\zeta
)}
{
R^{\abs{\alpha}}},
\end{eqnarray}
where
$R\in(\rho,\mu)$
and the analytic function $F_{N,\rho}(\zeta)$ was defined in Lemma~\ref{linear-b-lemma-f-n}.
By that lemma,
\[
\Abs{
\frac{(\abs{\alpha}+n)
F_{\abs{\alpha}+n,\rho}(\zeta)}{R^{\abs{\alpha}}}
}
\le
(\abs{\alpha}+n)
\frac{\rho^{n+\abs{\alpha}-2}}{2R^{\abs{\alpha}}}
\left(
2+\ln(\abs{\alpha}+n)+\pi+\ln\frac{\rho+\abs{\zeta}}{\rho-\abs{\zeta}}
\right)
\]
are analytic functions of $\zeta\in\mathbb{D}_r$,
$r\in(0,\rho)$,
which are bounded uniformly in $\alpha\in\N_0^n$
and $\zeta\in\mathbb{D}_r$,
by some $c\sb{r,\rho,R}>0$,
$0<r<\rho<R<\mu$.
Using this bound in \eqref{linear-b-i-1-alpha-1},
one has:
\begin{eqnarray}\label{linear-b-abs-i-1}
\abs{I_1^{(\rho)}(\zeta)}
\le
c\sb{r,\rho,R}
\sum
\sb{
\alpha\in(2\N_0)^n
}
\int_{\mathbb{S}^{n-1}}
\abs{C_\alpha\bm\theta^\alpha R^{\abs{\alpha}}}
\,d\Omega_{\bm\theta},
\qquad
\zeta\in\mathbb{D}_r.
\end{eqnarray}
Now we can argue that the series
\eqref{linear-b-i-1-alpha-1} is absolutely convergent.
To bound the right-hand side in \eqref{linear-b-abs-i-1},
we use the following lemma
with $\xi=R\bm\theta$.

\begin{lemma}\label{lemma-c-e}
For any $0<R<\mu$
there is $C_{R,\mu}>0$ such that
for any analytic function
$a(\xi)=\sum\sb{\alpha\in\N_0^n}a_\alpha \xi^\alpha$,
$\xi\in\mathbb{D}_\mu^n\subset\C^n$,
which has finite norm in $L^1(\mathbb{B}_{\mu}^{2n})$,
where
$\mathbb{B}_{\mu}^{2n}\subset\R^{2n}$
is identified with $\mathbb{D}_{\mu}^n\subset\C^n$,
one has
\[
\sup\sb{\xi\in\mathbb{D}_R^n}
\sum\sb{\alpha\in\N_0^n}\abs{a_\alpha\xi^\alpha}
\le C_{R,\mu}\norm{a}\sb{L^1(\mathbb{B}_{\mu}^{2n})}.
\]
\end{lemma}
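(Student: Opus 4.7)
The plan is to combine Cauchy's coefficient estimates with the fact that the absolute value of a holomorphic function of several complex variables is dominated pointwise by its $L^1$ mean over any polydisc on which it is defined. These are two classical ingredients; the statement follows by threading them together.

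First, I would fix an intermediate radius $R'\in(R,\mu)$ (for definiteness, $R'=(R+\mu)/2$) and set $\varepsilon=\mu-R'>0$. For any $\xi_0\in\mathbb{D}_{R'}^n$ the translated polydisc $\xi_0+\mathbb{D}_\varepsilon^n$ is contained in $\mathbb{D}_\mu^n$, and the iterated mean-value identity for holomorphic functions of several variables yields
\[
U(\xi_0)=\frac{1}{(\pi\varepsilon^2)^n}\int_{\xi_0+\mathbb{D}_\varepsilon^n}U(\eta)\,dV(\eta),
\qquad
\mbox{hence}
\quad
\sup_{\xi_0\in\mathbb{D}_{R'}^n}|U(\xi_0)|\le\frac{1}{(\pi\varepsilon^2)^n}\norm{U}_{L^1(\mathbb{B}_\mu^{2n})}.
\]

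Next, I would invoke the Cauchy integral representation on the distinguished boundary $\{|\eta_j|=R'\colon 1\le j\le n\}$: for each multi-index $\alpha\in\N_0^n$ and each $\xi\in\mathbb{D}_R^n$ one has
\[
|u_\alpha\xi^\alpha|\le (R/R')^{|\alpha|}\sup_{\mathbb{D}_{R'}^n}|U|.
\]
Summing the resulting geometric series factor-by-factor gives
\[
\sup_{\xi\in\mathbb{D}_R^n}\sum_{\alpha\in\N_0^n}|u_\alpha\xi^\alpha|\le\left(\frac{R'}{R'-R}\right)^n\sup_{\mathbb{D}_{R'}^n}|U|,
\]
and chaining this with the previous bound produces the desired inequality with $C_{R,\mu}=\bigl(R'/(R'-R)\bigr)^n(\pi\varepsilon^2)^{-n}$. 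There is no substantive obstacle: both ingredients are entirely standard, and the only care required is the elementary geometric bookkeeping ensuring that the polydiscs used in the mean-value step stay inside $\mathbb{B}_\mu^{2n}$.
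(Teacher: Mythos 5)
Your proof is correct. The paper states this lemma without proof (it is presented as a standard consequence of Cauchy estimates), so there is no argument to compare against; your combination of the sub-mean-value inequality on translated polydiscs (which controls $\sup_{\mathbb{D}_{R'}^n}|U|$ by the $L^1$ norm over $\mathbb{D}_\mu^n$) with the multi-variable Cauchy coefficient bounds (which control the absolute power-series sum on $\mathbb{D}_R^n$ by $\sup_{\mathbb{D}_{R'}^n}|U|$, with a geometric factor $\bigl(R'/(R'-R)\bigr)^n$) is exactly the standard argument one expects, and the bookkeeping is sound: with $R'=(R+\mu)/2$ and $\varepsilon=\mu-R'$, the closed translated polydiscs $\overline{\xi_0+\mathbb{D}_\varepsilon^n}$ indeed stay inside $\mathbb{D}_\mu^n$ for $\xi_0\in\mathbb{D}_{R'}^n$, and both ingredients apply.
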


\begin{proof}
One can prove Lemma~\ref{lemma-c-e}
using the Cauchy estimates
\[
\abs{a_\alpha}
\le
\int\limits\sb{\mathbb{S}^1_M
\times\dots\times
\mathbb{S}^1_M}
\frac{\abs{a(M e^{\jj\varphi_1},\dots,M e^{\jj\varphi_n})}}
{M^{\abs{\alpha}}}
\,\frac{d\varphi_1}{2\pi}\dots\,\frac{d\varphi_n}{2\pi},
\]
with
$M\in(R,\mu)$,
and changing each of the integrations
from $\varphi_j$
over $\mathbb{S}^1_M$ to
the integration
over a thin closed annulus
included in the region $R<\abs{z_j}<\mu$.
\end{proof}

This lemma,
together with the estimate
\eqref{linear-b-u-hat-u}
from Lemma~\ref{linear-b-lemma-a-c},
shows that,
for $\zeta\in\mathbb{D}_r$,
\eqref{linear-b-abs-i-1} is bounded by
\begin{eqnarray*}
\abs{I_1^{(\rho)}(\zeta)}
&\le&
c\sb{r,\rho,R}
\mathrm{vol}\,(\mathbb{S}^{n-1})
\sup\limits\sb{\bm\theta\in\mathbb{S}^{n-1}}
\sum\limits\sb{
\alpha\in(2\N_0)^n
}
\abs{C_\alpha\bm\theta^\alpha R^{\abs{\alpha}}}
\\
&\le&
c\sb{r,\rho,R}
C_{R,\mu}
\mathrm{vol}\,(\mathbb{S}^{n-1})
\norm{V U}\sb{L^1(\mathbb{B}_{\mu}^{2n})}
\\
&\le&
c\sb{r,\rho,R}
C_{R,\mu}
\mathrm{vol}\,(\mathbb{S}^{n-1})
\norm{V}\sb{L^2(\mathbb{B}_{\mu}^{2n})}
\norm{U}\sb{L^2(\mathbb{B}_{\mu}^{2n})}
\\
&\le&
c\sb{r,\rho,R}
C_{R,\mu}
C_{\mu}^2
\mathrm{vol}\,(\mathbb{S}^{n-1})
\norm{v}\sb{L^{2,\mu}(\R^n)}
\norm{u}\sb{L^{2,\mu}(\R^n)}
,
\end{eqnarray*}
where $V(\xi)$ and $U(\xi)$,
$\xi\in \Omega_\mu(\R^n)\subset\C^n$,
denote the analytic continuations of
$\hat v(\xi)$ and $\hat u(\xi)$, $\xi\in\R^n$,
into the $\mu$-neighborhood of $\R^n$ in $\C^n$.
We conclude that the series \eqref{linear-b-i-1-alpha}
is absolutely convergent
and therefore defines an analytic function.

Thus,
$I_1^{(\rho)}(\zeta)$
(and hence $I(\zeta)$ in \eqref{linear-b-two-int})
has an analytic continuation
into the disc
$\mathbb{D}_\rho$
for arbitrary $\rho\in(0,\mu)$,
and for any $r\in(0,\rho)$
the function $I_1^{(\rho)}(\zeta)$
(and hence $I(\zeta)$)
is bounded by $C(r)\norm{v}\norm{u}$
as long as $\zeta\in\mathbb{D}_r$.
This concludes the proof of
Proposition~\ref{linear-b-prop-rauch-prop3}.
\end{proof}

\section{Appendix: Spectrum of the linearized nonlinear Schr\"odinger equation}
\label{linear-b-sect-nls}

For the nonlinear Schr\"odinger equation
and several similar models,
real eigenvalues could only emerge from the origin,
and this emergence is controlled
by the Kolokolov stability condition~\cite{kolokolov-1973}.
Let us give the essence of the linear stability analysis
on the example of the (generalized)
nonlinear Schr\"odinger equation,
\[
\jj \p\sb t\psi=-\frac{1}{2m}\Delta\psi-f(\abs{\psi}\sp 2)\psi,
\qquad
\psi(t,x)\in\C,
\qquad
x\in\R\sp n,
\qquad
n\ge 1,
\qquad
t\in\R,
\]
where the nonlinearity satisfies $f\in C\sp\infty(\R)$, $f(0)=0$.
One can easily construct solitary wave solutions
$\phi(x)e\sp{-\jj \omega t}$,
for some $\omega\in\R$ and $\phi\in H\sp 1(\R\sp n)$:
$\phi(x)$ satisfies the stationary equation
$\omega\phi=-\frac{1}{2m}\Delta\phi-f(\phi\sp 2)\phi$,
and can be chosen strictly positive,
even, and monotonically decaying away from $x=0$.
The value of $\omega$ can not exceed $0$;
we will only consider the case $\omega<0$.
We use the Ansatz
$\psi(t,x)=(\phi(x)+\rho(t,x))e\sp{-\jj \omega t}$,
with $\rho(t,x)\in\C$.
The linearized equation on $\rho$
is called the linearization at a solitary wave:
\begin{equation}\label{linear-b-dv-nls-lin-0}
\p\sb t
\rho
=
\frac{1}{ \jj }
\big(-\frac{1}{2m}\Delta\rho
-\omega\rho
-f(\phi\sp 2)\rho
-2f'(\phi\sp 2)\phi\sp 2\Re\rho
\big).
\end{equation}

\begin{remark}\label{linear-b-remark-not-c-linear}
Because of the term with $\Re\rho$,
the operator in the right-hand side
is $\R$-linear but not $\C$-linear.
\end{remark}

To study the spectrum of the operator
in the right-hand side of \eqref{linear-b-dv-nls-lin-0},
we first write it in the
$\C$-linear form,
considering its
action onto
$\bmuprho(t,x)
=\left[\begin{matrix}\Re\rho(t,x)\\\Im\rho(t,x)\end{matrix}\right]
$:
\[
\p\sb t
\bmuprho
=
\begin{bmatrix}0&\eurl\sb{-}\\-\eurl\sb{+}&0\end{bmatrix}
\bmuprho,
\qquad
\bmuprho(t,x)=\left[\begin{matrix}\Re\rho(t,x)\\\Im\rho(t,x)\end{matrix}\right],
\]
with
\begin{equation}\label{def-l0-l1}
\eurl\sb{-}=-\frac{1}{2m}\Delta-\omega-f(\phi\sp 2),
\qquad
\eurl\sb{+}=\eurl\sb{-}-2\phi\sp 2 f'(\phi\sp 2).
\end{equation}
If $\phi\in\mathscr{S}(\R\sp n)$,
then by Weyl's theorem on the essential spectrum
one has
\[
\sigma\sb{\mathrm{ess}}(\eurl\sb{-})=\sigma\sb{\mathrm{ess}}(\eurl\sb{+})
=[\abs{\omega},+\infty).
\]

\begin{lemma}\label{linear-b-lemma-r-i}
$\sigma\Big(
\begin{bmatrix}0&\eurl\sb{-}\\-\eurl\sb{+}&0\end{bmatrix}
\Big)\subset\R\cup \jj \R$.
\end{lemma}

\begin{proof}
We consider
$
\begin{bmatrix}0&\eurl\sb{-}\\-\eurl\sb{+}&0\end{bmatrix}^2
=
-\left[\begin{matrix}
\eurl\sb{-}\eurl\sb{+}&0\\
0&\eurl\sb{+}\eurl\sb{-}
\end{matrix}\right].
$
Since $\eurl\sb{-}$ is positive-definite
($\phi\in\ker(\eurl\sb{-})$,
being nowhere zero,
corresponds to the smallest eigenvalue),
we can define the self-adjoint square root of $\eurl\sb{-}$;
then
\[
\sigma\sb{\mathrm{d}}
\Big(
\begin{bmatrix}0&\eurl\sb{-}\\-\eurl\sb{+}&0\end{bmatrix}^2
\Big)
\backslash\{0\}
=\sigma\sb{\mathrm{d}}(\eurl\sb{-}\eurl\sb{+})
\backslash\{0\}
=\sigma\sb{\mathrm{d}}(\eurl\sb{+}\eurl\sb{-})
\backslash\{0\}
=\sigma\sb{\mathrm{d}}(\eurl\sb{-}\sp{1/2}\eurl\sb{+}\eurl\sb{-}\sp{1/2})
\backslash\{0\}
\subset\R,
\]
with the inclusion
due to
$\eurl\sb{-}\sp{1/2}\eurl\sb{+}\eurl\sb{-}\sp{1/2}$
being self-adjoint.
Thus,
any eigenvalue $\lambda\in\sigma\sb{\mathrm{d}}
\Big(
\begin{bmatrix}0&\eurl\sb{-}\\-\eurl\sb{+}&0\end{bmatrix}
\Big)
$
satisfies
$\lambda\sp 2\in\R$.
\end{proof}

Given the family of solitary waves,
$\phi\sb\omega(x)e\sp{-\jj \omega t}$,
$\omega\in\mathcal{O}\subset\R$,
we would like to know
at which $\omega$
the eigenvalues
of the linearized equation
with $\Re\lambda>0$ appear.
Since $\lambda\sp 2\in\R$,
such eigenvalues can only be located
on the real axis,
having bifurcated from $\lambda=0$.
One can check that
$\lambda=0$ belongs to the discrete spectrum of
$\begin{bmatrix}0&\eurl\sb{-}\\-\eurl\sb{+}&0\end{bmatrix}$,
with
\[
\begin{bmatrix}0&\eurl\sb{-}\\-\eurl\sb{+}&0\end{bmatrix}
\left[\begin{matrix}0\\\phi\sb\omega\end{matrix}\right]=0,
\qquad
\begin{bmatrix}0&\eurl\sb{-}\\-\eurl\sb{+}&0\end{bmatrix}
\left[\begin{matrix}-\p\sb\omega\phi\sb\omega\\0\end{matrix}\right
]
=\left[\begin{matrix}0\\\phi\sb\omega\end{matrix}\right],
\]
for all $\omega$ which correspond to solitary waves.
Thus, if we will restrict our attention to functions which are
spherically symmetric in $x$,
the dimension of the generalized null space of
$\begin{bmatrix}0&\eurl\sb{-}\\-\eurl\sb{+}&0\end{bmatrix}$
is at least two.
Hence, the bifurcation follows the jump in the
dimension of its generalized null space.
Such a jump happens
at a particular value of $\omega$
if one can solve the equation
$\begin{bmatrix}0&\eurl\sb{-}\\-\eurl\sb{+}&0\end{bmatrix}
\upalpha=\left[\begin{matrix}\p\sb\omega\phi\sb\omega\\0
\end{matrix}\right]$.
This leads to the condition
that $\left[\begin{matrix}\p\sb\omega\phi\sb\omega\\0\end{matrix}\right]$
is orthogonal to the null space of the adjoint to
$\begin{bmatrix}0&\eurl\sb{-}\\-\eurl\sb{+}&0\end{bmatrix}$,
which contains the vector
$\left[\begin{matrix}\phi\sb\omega\\0\end{matrix}\right]$;
this results in
$\langle\phi\sb\omega,\p\sb\omega\phi\sb\omega\rangle
=\p\sb\omega\norm{\phi\sb\omega}\sb{L\sp 2}\sp 2/2=0$.
A slightly more careful analysis~\cite{MR1995870}
based on construction of the moving frame
in the generalized eigenspace of $\lambda=0$
shows that there are two real eigenvalues $\pm\lambda\in\R$
that have emerged from $\lambda=0$
when $\omega$ is such that
$\p\sb\omega\norm{\phi\sb\omega}\sb{L\sp 2}\sp 2$ becomes positive,
leading to a linear instability of the corresponding solitary wave.
The opposite condition,
$
\p\sb\omega\norm{\phi\sb\omega}\sb{L\sp 2}\sp 2<0,
$
is the Kolokolov stability criterion
which guarantees the absence of nonzero real eigenvalues
for the nonlinear Schr\"odinger equation.
It appeared in~\cite{kolokolov-1973,MR677997,MR723756,MR820338,MR901236,MR1221351}
in relation to linear and orbital stability
of solitary waves.

For the applications to the nonrelativistic limit
of the nonlinear Dirac equation,
we need to consider the linearization of the
nonlinear Schr\"odinger equation with pure power nonlinearity:
$f(\tau)=\abs{\tau}^\kappa$, $\kappa>0$:
\[
\jj \dot\psi=-\frac{1}{2m}\Delta\psi-\abs{\psi}\sp{2\kappa}\psi,
\qquad
\psi(t,x)\in\C,
\quad
x\in\R\sp n.
\]
We need the detailed knowledge of the spectrum
of the linearization operator
$
\begin{bmatrix}0&\eurl\sb{-}\\-\eurl\sb{+}&0\end{bmatrix}
$
corresponding to the linearization the solitary wave
$u_\kappa(x)e^{-\jj\omega t}$,
with
$u_\kappa$ a strictly positive
spherically symmetric solution to \eqref{dirac-existence-def-uu}
and $\omega=-\frac{1}{2m}$.

\begin{lemma}\label{linear-b-lemma-dim-ker}
The dimension of
the null space and
the generalized null space
of the linearization operator
$
\begin{bmatrix}0&\eurl\sb{-}\\-\eurl\sb{+}&0\end{bmatrix}
$
from \eqref{linear-b-lin-nls}
is given by
\[
\ker\Big(
\begin{bmatrix}0&\eurl\sb{-}\\-\eurl\sb{+}&0\end{bmatrix}
\Big)
=n+1,
\qquad\qquad
\frakL
\Big(
\begin{bmatrix}0&\eurl\sb{-}\\-\eurl\sb{+}&0\end{bmatrix}
\Big)
=
\begin{cases}
2n+2,
\qquad
&
\kappa\ne 2/n;
\\
2n+4,
\qquad
&
\kappa=2/n.
\end{cases}
\]
\end{lemma}

\begin{proof}
Such computations
have appeared in many articles.
The relation \eqref{dirac-existence-def-uu}
shows that
$\eurl\sb{-}u_\kappa=0$.
Taking the derivatives of this relation
with respect to $x\sp i$,
one also gets
$\eurl\sb{+}\p_{x^i} u_\kappa=0$,
$1\le i\le n$.
    From~\cite{kwong1989} or~\cite[Lemma 2.1]{chang2007}
we have that
$\dim\ker\Big(
\begin{bmatrix}\eurl\sb{+}&0\\0&\eurl\sb{-}\end{bmatrix}
\Big)=n+1$,
hence there are no other vectors in the kernel of
$
\begin{bmatrix}\eurl\sb{+}&0\\0&\eurl\sb{-}\end{bmatrix}
$.

Now let us study the generalized eigenvectors.
The relation
$\eurl\sb{-}u_\kappa=0$ leads to
\[
\eurl\sb{-}(x\sp i u_\kappa)=-\frac{1}{m}\p_{x^i} u_\kappa,
\quad
1\le i\le n.
\]
This shows that
\begin{equation}\label{linear-b-ker-2n}
\begin{bmatrix}0&\eurl\sb{-}\\-\eurl\sb{+}&0\end{bmatrix}
\begin{bmatrix}\p_{x^i} u_\kappa\\0\end{bmatrix}
=0,
\qquad
\begin{bmatrix}0&\eurl\sb{-}\\-\eurl\sb{+}&0\end{bmatrix}
\begin{bmatrix}
0\\ x\sp i u_\kappa
\end{bmatrix}
=-\frac 1 m\begin{bmatrix}\p_{x^i} u_\kappa\\0\end{bmatrix},
\qquad
1\le i\le n.
\end{equation}
We can not extend this sequence:
there is no $v$ such that
\[
\begin{bmatrix}0&\eurl\sb{-}\\-\eurl\sb{+}&0\end{bmatrix}
\begin{bmatrix}
v\\0
\end{bmatrix}
=
\begin{bmatrix}
0\\ x\sp i u_\kappa
\end{bmatrix},
\]
since
$x\sp i u_\kappa$ is not orthogonal
to the kernel of $\eurl\sb{+}$.
Indeed, as follows from the identity
\begin{equation}\label{linear-b-xd-uu}
\langle
x\sp i u_\kappa, \p_{x^i} u_\kappa\rangle
=\langle(-u_\kappa-x\sp i \p_{x^i} u_\kappa), u_\kappa\rangle
\end{equation}
(no summation in $i$),
one has
$
\langle
x\sp i u_\kappa, \p_{x^i} u_\kappa\rangle
=
-\frac 1 2\langle u_\kappa, u_\kappa\rangle<0.
$

By \eqref{dirac-existence-def-uu},
the function
$u\sb{\kappa,\lambda}(x)=\lambda\sp{1/\kappa} u_\kappa(\lambda x)$
satisfies the identity
\[
-\frac{\lambda\sp{2}}{2m}u\sb{\kappa,\lambda}
=-\frac{1}{2m}\Delta u\sb{\kappa,\lambda}
-u\sb{\kappa,\lambda}\sp{1+2\kappa}.
\]
Differentiating this identity
with respect to $\lambda$ at $\lambda=1$
yields
\begin{equation}\label{linear-b-weird}
0
=\eurl\sb{+}(\p\sb\lambda\at{\lambda=1}u\sb{\kappa,\lambda})+\frac{1}{m}u_\kappa
=\eurl\sb{+}
\Big(\frac{1}{\kappa}u_\kappa+x\cdot\nabla u_\kappa\Big)+\frac{1}{m}u_\kappa.
\end{equation}
This shows that
\begin{equation}\label{linear-b-ker-2}
\begin{bmatrix}0&\eurl\sb{-}\\-\eurl\sb{+}&0\end{bmatrix}
\begin{bmatrix}0\\u_\kappa\end{bmatrix}
=0,
\qquad
\begin{bmatrix}0&\eurl\sb{-}\\-\eurl\sb{+}&0\end{bmatrix}
\begin{bmatrix}
-\theta
\\0\end{bmatrix}
=
\begin{bmatrix}0\\u_\kappa\end{bmatrix},
\end{equation}
with
\begin{eqnarray}\label{linear-b-def-theta}
\theta=
-\frac{m}{\kappa}u_\kappa-m x\cdot\nabla u_\kappa,
\qquad
\eurl\sb{+}\theta=u_\kappa.
\end{eqnarray}
The relations \eqref{linear-b-ker-2n} and \eqref{linear-b-ker-2}
show that
$\dim\frakL\Big(
\begin{bmatrix}0&\eurl\sb{-}\\-\eurl\sb{+}&0\end{bmatrix}
\Big)\ge 2n+2$.
The dimension jumps above $2n+2$
in the case when one can find $\alpha$
such that
\begin{eqnarray}\label{linear-b-def-alpha}
\eurl\sb{-}\alpha=\theta,
\qquad
\mbox{hence}
\quad
\begin{bmatrix}0&\eurl\sb{-}\\-\eurl\sb{+}&0\end{bmatrix}
\begin{bmatrix}
0\\\alpha
\end{bmatrix}
=
\begin{bmatrix}
\theta
\\
0
\end{bmatrix}.
\end{eqnarray}
This happens
when
$\theta$ in \eqref{linear-b-def-theta} is orthogonal
to $\ker(\eurl\sb{-})=\Span\{u_\kappa\}$.
Using the identity \eqref{linear-b-xd-uu},
we see that
\begin{equation}\label{linear-b-K-n}
\frac{1}{m}
\langle
\theta,u_\kappa
\rangle
=
\Big\langle
-\frac 1 \kappa u_\kappa-x\cdot\nabla u_\kappa,u_\kappa
\Big\rangle
=
-\frac 1 \kappa
\langle
u_\kappa,u_\kappa
\rangle
+\frac n 2
\langle
u_\kappa,u_\kappa
\rangle.
\end{equation}
The right-hand side of \eqref{linear-b-K-n}
vanishes when $\kappa=2/n$
(that is, when the nonlinear Schr\"odinger equation
is charge-critical).
We may choose
$\alpha$ to be spherically symmetric
so that it is orthogonal to
$\ker(\eurl\sb{+})
=\Span\big\{\p_{x^i} u_\kappa\sothat 1\le i\le n\big\}$
(as the matter of fact, one can take
$\alpha(x)=-\frac{m}{2}x^2 u_\kappa(x)$);
then, by the Fredholm alternative,
there is $\beta\in L\sp 2(\R\sp n)$ such that
\begin{eqnarray}\label{linear-b-def-beta}
\eurl\sb{+}\beta=\alpha,
\qquad
\mbox{hence}
\quad
\begin{bmatrix}0&\eurl\sb{-}\\-\eurl\sb{+}&0\end{bmatrix}
\begin{bmatrix}-\beta\\0\end{bmatrix}
=
\begin{bmatrix}0\\\alpha\end{bmatrix}.
\end{eqnarray}
(Let us also point out that $\alpha$ and $\beta$ can be chosen real-valued.)
This process can not be continued:
there is no $\gamma\in L\sp 2(\R\sp n)$ such that $\eurl\sb{-}\gamma=\beta$
since
$\beta$ is never orthogonal to $\ker(\eurl\sb{-})$;
indeed, due to semi-positivity of $\eurl\sb{-}$, one has
\[
\langle \beta,u_\kappa\rangle
=\langle \beta,\eurl\sb{+}\theta\rangle
=\langle \beta,\eurl\sb{+}\eurl\sb{-}\alpha\rangle
=\langle\alpha,\eurl\sb{-}\alpha\rangle>0.
\qedhere
\]
\end{proof}

We also need the following technical result.

\begin{lemma}\label{linear-b-lemma-lm-inverse-h1}
For $z\in\rho(\eurl\sb{-})$,
the operator
$(\eurl\sb{-}-z)^{-1}:\;L^2(\R^n)\to H^2(\R^n)$
extends to a continuous mapping
\[
(\eurl\sb{-}-z)^{-1}:\;H^{-1}(\R^n)\to H^1(\R^n).
\]
\end{lemma}

\begin{proof}
Set $a=\sup\sb{x\in\R^n}u_\kappa(x)^{2\kappa}$.
Then
there is $C<\infty$ such that
for any $\varphi\in C^\infty\sb{\mathrm{comp}}(\R^n)$
\[
C
\norm{\varphi}\sb{H^1}^2
\ge
\abs{\langle\varphi,(\eurl\sb{-}+a)\varphi\rangle}
\ge
\Big\langle
\varphi,\Big(-\frac{1}{2m}\Delta+\frac{1}{2m}\Big)\varphi
\Big\rangle
=\frac{1}{2m}\norm{\varphi}\sb{H^1}^2,
\qquad
\forall \varphi\in C^\infty\sb{\mathrm{comp}}(\R^n),
\]
hence the self-adjoint operator
\begin{eqnarray}\label{linear-b-l-h2-l2}
\eurl\sb{-}+a:\;H^2(\R^n)\to L^2(\R^n)
\end{eqnarray}
is positive-definite and invertible.
We can extract its square root,
which is a positive-definite bounded invertible operator
\[
(\eurl\sb{-}+a)^{1/2}:\;H^1(\R^n)\to L^2(\R^n);
\]
then
\eqref{linear-b-l-h2-l2}
also defines a bounded invertible operator
$
(\eurl\sb{-}+a)^{1/2}:\;H^2(\R^n)\to H^1(\R^n),
$
and by duality there is also a bounded invertible mapping
$(\eurl\sb{-}+a)^{1/2}:\;L^2(\R^n)\to H^{-1}(\R^n)$.
We fix $z\in\rho(\eurl\sb{-})$;
then the mapping
\[
(\eurl\sb{-}+a)^{1/2}
(\eurl\sb{-}-z)
(\eurl\sb{-}+a)^{-1/2}:\;H^1(\R^n)\to H^{-1}(\R^n)
\]
is bounded and invertible.
Since
$\eurl\sb{-}+a$ and its square root
commute with $\eurl\sb{-}-z$
(when restricted e.g. to the space of Schwartz functions),
we apply the density argument to conclude that
$\eurl\sb{-}-z$ extends to a bounded invertible mapping
$H^1(\R^n)\to H^{-1}(\R^n)$.
\end{proof}

\section{The limiting absorption principle for ${\eurl\sb{-}}$}

\label{sect-lap-lm}

We will need the estimates on the resolvent of
$\eurl\sb{-}=\frac{1}{2m}-\frac{\Delta}{2m}-u_\kappa^{2\kappa}$
from \eqref{linear-b-def-l-small-pm}
in the spaces with exponential weights.

\begin{lemma}\label{linear-b-lemma-j-n}
Let $\varOmega\subset\C$ be a compact set.
Assume that
\[
\varOmega\cap\sigma\sb{\mathrm{d}}(\eurl\sb{-})=\emptyset.
\]
If $z=1/(2m)$ is either an eigenvalue or a virtual level of $\eurl\sb{-}$,
assume further that
\[
1/(2m)\not\in\varOmega.
\]
Then there is $C=C(\varOmega)>0$
such that the map
\[
u_\kappa^\kappa\circ (\eurl\sb{-}-z)^{-1}\circ u_\kappa^\kappa:\;L^2(\R^n)\to H^2(\R^n),
\qquad
z\in\C\setminus
\sigma(\eurl\sb{-})
\]
satisfies
\[
\norm{u_\kappa^\kappa\circ (\eurl\sb{-}-z)^{-1}\circ u_\kappa^\kappa}
_{L^2\to H^2}\le C,
\qquad
\forall z\in\varOmega\setminus[1/(2m),+\infty).
\]
\end{lemma}

\begin{remark}
The $L^2_s\to L^2_{-s}$ estimate on the resolvent
improves as the spectral parameter goes to infinity,
while $L^2_s\to H^2_{-s}$ estimate does not necessarily so
(see e.g. Lemma~\ref{linear-b-lemma-lap-agmon}
with different values of $\nu$);
this is why in the above lemma we need to consider the spectral parameter
restricted to a compact set $\varOmega\subset\C$.
\end{remark}

\begin{proof}
The first part
(bounds away from an open neighborhood of
the threshold $z=1/(2m)$)
follows from \cite[Appendix A]{MR0397194}.

If the threshold $z=1/(2m)$
is a regular point of the essential spectrum of $\eurl\sb{-}$
(neither an eigenvalue nor a virtual level),
then the result follows
from \cite{MR544248} for $n=3$
and from \cite[Proposition 7.4.6]{MR2598115} for $n\ge 3$.

The case $n\le 2$, which is left to prove, follows from~\cite{MR1841744}.
We recall the terminology from that article.
Given the operators $H_0=-\Delta$ and $H=H_0+V$ in $L^2(\R^n)$,
we denote
\[
U=\begin{cases}1,&V\ge 0;\\-1,&V<0\end{cases};
\quad v=\abs{V}^{1/2},\quad
w=U v,
\]
\[
M(\varsigma)=U+v(H_0+\varsigma^2)^{-1}v
:\;L^2(\R^n)\to L^2(\R^n),
\qquad
\Re \varsigma>0.
\]
There is the identity
$
(1-w(H+\varsigma^2)^{-1}v)(1+w(H_0+\varsigma^2)^{-1}v)=1;
$
hence, if $M(\varsigma)$ is invertible,
\[
1-w(H+\varsigma^2)^{-1}v=(1+w(H_0+\varsigma^2)^{-1}v)^{-1}
=(U+v(H_0+\varsigma^2)^{-1}v)^{-1}U=M(\varsigma)^{-1}U,
\]
\[
U-w(H+\varsigma^2)^{-1}w=M(\varsigma)^{-1},
\qquad
w(H+\varsigma^2)^{-1}w=U-M(\varsigma)^{-1}
\]
(cf. \cite[Equation (4.8)]{MR1841744}).
In the case at hand, $V=-u_\kappa^{2\kappa}$, $U=-1$, $v=-w=u_\kappa^\kappa$
(understood as operators of multiplication);
thus,
\begin{equation}\label{linear-b-def-m-kappa}
M(\varsigma)=
-1+u_\kappa^\kappa\circ
\Big(-\frac{\Delta}{2m}+\varsigma^2\Big)^{-1}\circ u_\kappa^\kappa,
\end{equation}
and, when $M(\varsigma)$ is invertible,
\begin{equation}
\label{linear-b-u-l-u-m}
u_\kappa^\kappa\circ (\eurl\sb{-}-z)^{-1}\circ u_\kappa^\kappa
=
u_\kappa^\kappa\circ (-\Delta-u_\kappa^{2\kappa}+\varsigma^2)^{-1}\circ u_\kappa^\kappa
=-1-M(\varsigma)^{-1},
\end{equation}
with $z$ and $\varsigma$ related by
\[
\frac{1}{2m}-z=\varsigma^2.
\]
Using the expression for the integral kernels of
$(H_0+\varsigma^2)^{-1}$
in dimensions $n\le 2$
(see \cite[Section 3, (3.13) and (3.14)]{MR1841744}), the operator
$M(\varsigma)$ extends by continuity to the region
\[
\{\varsigma\in \C\setminus \{0\}
\sothat \Re \varsigma \geq 0\},
\]
with a singularity $-\frac{1}{2\pi}\log(\varsigma)$
at $\varsigma=0$.
If $-\Delta-u_\kappa^{2\kappa}$ has no eigenvalue or virtual level at $\lambda=0$,
then $M(\varsigma)$ is invertible in the orthogonal complement
of $v$ (with the inverse bounded uniformly as $\varsigma\to 0$),
while $M(\varsigma)/\log(\varsigma)$ is always invertible
(with the inverse bounded uniformly as $\varsigma\to 0$)
in the span of $v$.
We deduce that, as long as $\abs{\varsigma}$ is small enough and $\varsigma\ne 0$,
$M(\varsigma)$
is invertible,
with a uniform bound
on $\norm{M(\varsigma)^{-1}}$
in an open neighborhood
of $0$ in the half-plane $\Re \varsigma\ge 0$.
Now the conclusion of the lemma
for the case $n=2$
follows from \eqref{linear-b-u-l-u-m}.
The one-dimensional case is dealt with similarly.
\end{proof}

\section{The Schur complement}

\label{linear-b-sect-Schur}

In the proofs, we often use the concept of the Schur complement,
which is sometimes called the Feshbach map.
Let us give the formulation in the operator form that we will need
(see also 
\cite[Theorem IV.1]{bach1998quantum} and \cite[Lemma 2.3]{MR1841744}).

\begin{lemma}
\label{lemma-schur-complement}
Let
$\bfX_1$ and $\bfX_2$
be Banach spaces
and let $A$ be a closed linear operator on $\bfX_1\oplus\bfX_2$
with dense domain $\dom(A)\subset\bfX_1\oplus\bfX_2$, such that
\[
A=\begin{bmatrix}A_{11}&A_{12}\\A_{21}&A_{22}\end{bmatrix},
\]
where $A_{j i}:\,\bfX_i\to\bfX_j$, $1\le i,\,j\le 2$ are closed linear operators
with domains
\[
\dom(A_{j1})=\{x_1\in\bfX_1\sothat (x_1,0)\in\dom(A)\},
\quad
\dom(A_{j2})=\{x_2\in\bfX_2\sothat (0,x_2)\in\dom(A)\}
\]
which are dense in $\bfX_1$ and $\bfX_2$, respectively.
Assume that there are Banach spaces
\[
\bfW_i \subset \bfX_i \subset \bfY_i,
\qquad
1\le i\le 2,
\]
with continuous dense embeddings,
such that $A_{j i}$ extends to a bounded operator from $\bfW_i$ to $\bfY_j$,
with $1\le i,\,j\le 2$.
Assume further that $A_{11}:\,\bfW_1\to\bfY_1$
has a bounded inverse.
Let $S$ be the Schur complement of $A_{11}$ defined by
\begin{eqnarray}\label{def-s-c-11}
S=A_{22}-A_{21}A_{11}^{-1}A_{12}:\;\bfW_2\to\bfY_2.
\end{eqnarray}
Then $A$
(considered as an operator from $\dom(A)$ to $\bfX_1\oplus \bfX_2$)
has a bounded inverse 
if and only if
$S:\bfW_2 \to \bfY_2$
is invertible
(not necessarily with the bounded inverse)
and 
the restrictions
$S^{-1}\at{\bfX_2}$, $A_{11}^{-1}A_{12}S^{-1}\at{\bfX_2}$,
$S^{-1}A_{21}A_{11}^{-1}\at{\bfX_1}$,
and $A_{11}^{-1}A_{12}S^{-1}A_{21}A_{11}^{-1}\at{\bfX_1}$
define the bounded operators
\begin{eqnarray*}
S^{-1}:\,\bfX_2&\to&\bfX_2,
\\
A_{11}^{-1}A_{12}S^{-1}:\,\bfX_2&\to&\bfX_1,
\\
S^{-1}A_{21}A_{11}^{-1}:\,\bfX_1&\to&\bfX_2,
\\
A_{11}^{-1}A_{12}S^{-1}A_{21}A_{11}^{-1}:\;\bfX_1&\to&\bfX_1.
\end{eqnarray*}
\end{lemma}

\begin{proof}
We consider $A$ as a bounded operator from $\bfW_1 \oplus \bfW_2$ to $\bfY_1 \oplus \bfY_2$. Since $A_{11}:\,\bfW_1\to\bfY_1$
has a bounded inverse, the operators
\[ 
\begin{bmatrix}I&A_{11}^{-1}A_{12}\\0&I\end{bmatrix} \qquad \mbox{and} \qquad
\begin{bmatrix}I&0\\A_{21}A_{11}^{-1}&I\end{bmatrix}
\]
are bounded as linear operators in $\bfW_1 \oplus \bfW_2$ and in $\bfY_1 \oplus \bfY_2$,
respectively,
with 
bounded inverses given by
\[ 
\begin{bmatrix}I&-A_{11}^{-1}A_{12}\\0&I\end{bmatrix} \qquad \mbox{and} \qquad
\begin{bmatrix}I&0\\-A_{21}A_{11}^{-1}&I\end{bmatrix}.
\]
The following identities show that
$S$
is invertible
as a map from $\bfW_2$ to $\bfY_2$
if and only if $A$ is invertible as a map from 
$\bfW_1 \oplus \bfW_2$ to $\bfY_1 \oplus \bfY_2$:
\begin{eqnarray}
\label{linear-b-t-3}
A
=
\begin{bmatrix}A_{11}&A_{12}\\A_{21}&A_{22}\end{bmatrix}
=
\begin{bmatrix}I&0\\A_{21}A_{11}^{-1}&I\end{bmatrix}
\begin{bmatrix}A_{11}&0\\0&S\end{bmatrix}
\begin{bmatrix}I&A_{11}^{-1}A_{12}\\0&I\end{bmatrix},
\qquad
\\[1ex]
\label{linear-b-t-3-inverse}
\begin{bmatrix}I&0\\-A_{21}A_{11}^{-1}&I\end{bmatrix}
\begin{bmatrix}A_{11}&A_{12}\\A_{21}&A_{22}\end{bmatrix}
\begin{bmatrix}I&-A_{11}^{-1}A_{12}\\0&I\end{bmatrix}
=
\begin{bmatrix}A_{11}&0\\0&S\end{bmatrix}.
\qquad
\end{eqnarray}
Explicitly, the inverse of $A$ in terms of the inverse of $S$ is given by
\begin{eqnarray}\label{linear-b-Schur}
A^{-1}
=
\begin{bmatrix}
A_{11}^{-1}+A_{11}^{-1}A_{12}S^{-1}A_{21}A_{11}^{-1}
&
-A_{11}^{-1}A_{12}S^{-1}
\\[1ex]
-S^{-1}A_{21}A_{11}^{-1}
&
S^{-1}
\end{bmatrix};
\end{eqnarray}
this expression is known as the Banachiewicz inversion formula.
One can see that $A$
(considered as a map from $\dom(A)$ to $\bfX_1\oplus\bfX_2$)
has a bounded inverse
if and only if the mappings
$S^{-1}$,
$A_{11}^{-1}A_{12}S^{-1}$,
$S^{-1}A_{21}A_{11}^{-1}$,
and
$A_{11}^{-1}A_{12}S^{-1}A_{21}A_{11}^{-1}$
are bounded in appropriate spaces.
\end{proof}


\begin{remark}
Swapping the indices in Lemma~\ref{lemma-schur-complement},
we have the following result.
Assume that $A_{22}:\,\bfW_2\to\bfY_2$ has a bounded inverse.
Let $T$ be the Schur complement of $A_{22}$ defined by
\begin{eqnarray}\label{def-s-c-22}
T=A_{11}-A_{12}A_{22}^{-1}A_{21}:\;\bfW_1\to\bfY_1.
\end{eqnarray}
Then $A$
(considered as an operator from $\dom(A)$ to $\bfX_1\oplus \bfX_2$)
has a bounded inverse
if and only if
$T:\bfW_1 \to \bfY_1$
is invertible and 
the restrictions
$T^{-1}\at{\bfX_1}$, $A_{22}^{-1}A_{21}T^{-1}\at{\bfX_1}$,
$T^{-1}A_{12}A_{22}^{-1}\at{\bfX_2}$, and
$A_{22}^{-1}A_{21}T^{-1}A_{12}A_{22}^{-1}\at{\bfX_2}$
define the bounded operators
\begin{eqnarray*}
T^{-1}:\,\bfX_1&\to&\bfX_1,
\\
A_{22}^{-1}A_{21}T^{-1}:\,\bfX_1&\to&\bfX_2,
\\
T^{-1}A_{12}A_{22}^{-1}:\,\bfX_2&\to&\bfX_1,
\\
A_{22}^{-1}A_{21}T^{-1}A_{12}A_{22}^{-1}:\;\bfX_2&\to&\bfX_2.
\end{eqnarray*}
Similarly to
\eqref{linear-b-Schur},
 the inverse of $A$ in terms of the inverse of $T$
 is given in the explicit form by
 \begin{eqnarray}\label{linear-b-Schur-22}
 A^{-1}
 =
 \begin{bmatrix}
 T^{-1}
 &
 -T^{-1}A_{12}A_{22}^{-1}
 \\[1ex]
 -A_{22}^{-1}A_{21}T^{-1}
 &
 A_{22}^{-1}+A_{22}^{-1}A_{21}T^{-1}A_{12}A_{22}^{-1}
 \end{bmatrix}.
 \end{eqnarray}
\end{remark}

\bibliographystyle{sima-doi}
\bibliography{bibcomech}
\end{document}